\documentclass{amsart}

\usepackage{amsmath}
\usepackage{amssymb}
\usepackage{amsthm}

\newtheorem{thm}{Theorem}
\newtheorem{cor}[thm]{Corollary}
\newtheorem{Lem}[thm]{Lemma}
\newtheorem{rmk}[thm]{Remark}

\DeclareMathOperator{\PIexp}{exp}
\DeclareMathOperator{\var}{var}
\DeclareMathOperator{\Id}{Id}
\DeclareMathOperator{\Der}{Der}
\DeclareMathOperator{\Span}{span}
\DeclareMathOperator{\Supp}{Supp}
\DeclareMathOperator{\Char}{char}
\DeclareMathOperator{\ad}{ad}

\title[Growth of differential identities]{Growth of differential identities}

\author[C. Rizzo]{Carla Rizzo}\address{Dipartimento di Matematica e
Informatica
\\Universit\`a di Palermo \\ Via Archirafi 34, 90123 Palermo, Italy}
\email{carla.rizzo@unipa.it}


\keywords{polynomial identity, differential identity, codimension, cocharacter}

\subjclass[2010]{Primary 16R10, 16R50 Secondary 16P90}


\begin{document}

\begin{abstract}

In this paper we study the growth of the differential identities of some algebras with derivations, i.e., associative algebras where a Lie algebra $L$ (and its universal enveloping algebra $U(L)$) acts on them by derivations. In particular, we study in detail the differential identities and the cocharacter sequences of some algebras whose sequence of differential codimensions has polynomial growth. Moreover, we shall give a complete description of the differential identities of the algebra $UT_2$ of $2\times 2$ upper triangular matrices endowed with all possible action of a Lie algebra by derivations. Finally, we present the structure of the differential identities of the infinite dimensional Grassmann $G$ with respect to the action of a finite dimensional Lie algebra $L$ of inner derivations.

\end{abstract}

\maketitle

\section{Introduction}
Let $A$ be an associative algebra over a field $F$ of characteristic zero and assume that a Lie algebra $L$ acts on it by derivations. Such an action can be naturally extended to the action of the universal enveloping algebra $U(L)$ of $L$ and in this case we say that $A$ is an algebra with derivations or an $L$-algebra. In this context it is natural to define the differential identities of $A$, i.e., the polynomials in non-commutative variables $x^{h}=h(x)$, $h\in U(L)$, vanishing in $A$.



An effective way of measuring the differential identities satisfied by a given $L$-algebra $A$ is provided by its sequence of differential codimensions $c_{n}^{L}(A)$, $n=1,2,\dots$. The $n$th term of such sequence measures the dimension of the space of multilinear differential polynomials in $n$ variables of the relatively free algebra with derivations of countable rank of $A$. Since in characteristic zero, by the multilinearization process, every differential identity is equivalent to a system of multilinear ones, the sequence of differential codimensions of $A$ gives a quantitative measure of the differential identities satisfied by the given $L$-algebra. Maybe the most important feature of this sequence proved by Gordienko in \cite{Gordienko2013JA} is that in case $A$ is a finite dimensional $L$-algebra, $c_{n}^{L}(A)$ is exponentially bounded. Moreover, he determined the exponential rate of growth of the sequence of differential codimension, i.e, he proved that for any finite dimensional $L$-algebra $A$, the limit $\lim_{n\to \infty}\sqrt[n]{c_{n}^{L}(A)}$ exists and is a non-negative integer. Such integer, denoted $\PIexp^L(A)$, is called the differential PI-exponent of the algebra $A$ and it provides a scale allowing us to measure the rate of growth of any finite dimensinal $L$-algebra. As a consequence of this result it follows that the differential codimensions of a finite dimensional $L$-algebra $A$ are either polynomially bounded or grow exponentially. Hence no intermediate growth is allowed.


When studying the polynomial identities of an $L$-algebra $A$, one is lead to consider $\var ^{L}(A)$, the $L$-variety of algebras with
derivations generated by $A$, that is the class of $L$-algebras satisfying all differential identities satisfied by $A$. Thus we define the growth of $\mathcal{V}= \var ^{L}(A)$ to be the growth of the sequence $c_{n}^{L}(\mathcal{V})=c_{n}^{L}(A)$, $n=1,2,\dots$ and we say that a variety $\mathcal{V}$ has almost polynomial growth if $\mathcal{V}$ has exponential growth but every proper subvariety has polynomial growth. Since the ordinary polynomial identities and corresponding codimensions are obtained by let $L$ acting on $A$ trivially (or $L$ is the trivial Lie algebra), the algebra $UT_2$ of $2 \times 2$ upper triangular matrices regarded as $L$-algebra where $L$ acts trivially on it generates an $L$-variety of almost polynomial growth (see \cite{Kemer1979,GiambrunoRizzo2018}). Clearly another example of algebras generating an $L$-variety of almost polynomial growth is the infinite dimensional Grassmann algebra $G$ where $L$ acts trivially on it (see \cite{Kemer1979,Rizzo2018}). Notice that in the ordinary case Kemer in \cite{Kemer1979} proved that $UT_2$ and $G$ are the only algebras generating varieties of almost polynomial growth.

Recently in \cite{GiambrunoRizzo2018} the authors introduced another algebra with derivations generating a $L$-variety of almost polynomial growth. They considered $UT_2^\varepsilon$ to be the algebra $UT_2$ with the action of the 1-dimensional Lie algebra spanned by the inner derivation $\varepsilon$ induced by $2^{-1}(e_{11}-e_{22})$, where $e_{ij}$'s are the usual matrix units. Also they proved that when the Lie algebra $\Der(UT_2)$ of all derivations acts on $UT_2$, the variety with derivations generated by $UT_2$ has no almost polynomial growth.


Notice that if $\delta$ is the inner derivation of $UT_2$ induced by $2^{-1}e_{12}$, then $\Der(UT_2)$ is a 2-dimensional metabelian Lie algebra with basis $\{\varepsilon,\delta\}$. Thus in order to complete the description of the differential identities of $UT_2$, here we shall study the $T_L$-ideal of the differential identities of $UT_2^\delta$, i.e., the algebra $UT_2$ with the action of the 1-dimensional Lie algebra spanned by $\delta$. In particular we shall prove that $UT_2^\delta$ does not generate an $L$-variety of almost polynomial growth. 

Moreover, we shall study the differential identities of some particular $L$-algebras whose sequence of differential codimension has polynomial growth. In particular we shall exhibit an example of commutative algebra with derivations that generates a $L$-variety of linear growth.

Finally, we shall give an example of infinite dimensional $L$-algebra of exponential growth. We shall present the structure of the differential identities of $\widetilde{G}$, i.e., the infinite dimensional Grassmann algebra with the action of a finite dimensional abelian Lie algebra and we show that, unlike the ordinary case, $\widetilde{G}$ does not generate an $L$-variety of almost polynomial growth.

\section{$L$-algebras and differential identities}
\label{sec:Preliminaries}

Throughout this paper $F$ will denote a field of characteristic zero. Let $A$ be an associative algebra over $F$. Recall that a derivation of $A$ is a linear map $\partial:A\to A$ such that
$$\partial(ab)=\partial(a)b+a\partial(b), \qquad \mbox{ for all } a,b\in A.$$
In particular an inner derivation induced by $ a\in A $ is the derivation $\ad a:A\to A$ of $A$ defined by $(\ad a)(b)=[a,b]=ab-ba$, for all $b\in A$. The set of all derivations of $A$ is a Lie algebra denoted by $\Der(A)$, and the set $\ad (A)$ of all inner derivations of $A$ is a Lie subalgebra of $\Der(A)$.

Let $L$ be a Lie algebra over $F$ acting on $A$ by derivations. If $U(L)$ is its universal enveloping algebra, the $L$-action on $A$ can be naturally extended to an $U(L)$-action. In this case we say that $A$ is an algebra with derivations or an $L$-algebra.


Let $L$ be a Lie algebra. Given a basis $\mathcal{B}=\{h_{i}\ \vert \ i\in I \}$ of the universal enveloping algebra of $L$, $U(L)$, we let $F\langle X|L\rangle$ be the free associative algebra over $F$ with free formal generators $x_{j}^{h_{i}}$, $i\in I$, $j\in\mathbb{N}$. 
We write $x_{i}=x_{i}^{1}$, $1\in U(L)$, and then we set $X=\{x_{1},x_{2},\dots\}$. We let $U(L)$ act on $F\langle X| L\rangle$  by setting
$$\gamma(x_{j_{1}}^{h_{i_{1}}}x_{j_{2}}^{h_{i_{2}}}\dots x_{j_{n}}^{h_{i_{n}}})=x_{j_{1}}^{\gamma h_{i_{1}}}x_{j_{2}}^{h_{i_{2}}}\dots x_{j_{n}}^{h_{i_{n}}}+\dots+x_{j_{1}}^{h_{i_{1}}}x_{j_{2}}^{h_{i_{2}}}\dots x_{j_{n}}^{\gamma h_{i_{n}}},$$
where $\gamma\in L$ and $x_{j_{1}}^{h_{i_{1}}}x_{j_{2}}^{h_{i_{2}}}\dots x_{j_{n}}^{h_{i_{n}}}\in F\langle X|L\rangle$. The algebra $F\langle X|L\rangle$ is called the free associative algebra with derivations on the countable set $X$ and its elements are called differential polynomials (see \cite{GiambrunoRizzo2018,GordienkoKochetov2014,Kharchenko1979}).

Given an $L$-algebra $A$, a polynomial $f(x_{1},\dots,x_{n})\in F\langle X|L\rangle$ is a polynomial identity with derivation of $A$, or a differential identity of $A$, if $f(a_{1},\dots,a_{n})=0$ for all $a_{i}\in A$, and, in this case, we write $f\equiv 0$. 

Let $\Id^{L}(A)=\{f\in F\langle X|L\rangle \ \vert \ f\equiv 0 \mbox{ on } A\}$ be the set of all differential identities of $A$. It is readily seen that $\Id^L (A)$ is a $T_L$-ideal of $F\langle X|L\rangle$, i.e., an ideal invariant under the $U(L)$-action. In characteristic zero every differential identity is equivalent to a system of multilinear differential identities. Hence $\Id^L (A)$ is completely determined by its multilinear polynomial. 

Let
$$P_{n}^{L}=\Span\{x_{\sigma(1)}^{h_{1}}\dots x_{\sigma(n)}^{h_{n}} \ \vert \ \sigma\in S_{n},h_{i}\in \mathcal{B} \}$$
be the space of multilinear differential polynomials in the variables $x_{1},\dots,x_{n}$, $ n\geq 1$. We act on $P_n ^L$ via the symmetric group $S_n$ as follows:  for $\sigma\in S_{n}$, $\sigma(x_{i}^{h})=x_{\sigma(i)}^{h}$. For every $L$-algebra $A$, the vector space $P_{n}^{L}\cap \Id^{L}(A)$ is invariant under this action. Hence the space $P_n ^L (A)= P_n ^L / ( P_{n}^{L}\cap \Id^{L}(A))$ has a structure of left $S_n$-module. The non-negative integer $c_n ^L (A)= \dim P_n ^L (A)$ is called $n$th differential codimension of $A$ and the character $\chi_n ^L (A)$ of $P_n^L (A)$ is called $n$th differential cocharacter of $A$. Since $\Char F=0$, we can write
$$\chi_n^{L}(A)=\sum_{\lambda\vdash n} m^L_{\lambda}\chi_{\lambda},$$
where $\lambda$ is a partition of $n$, $\chi_{\lambda}$ is the irreducible $S_{n}$-character associated to $\lambda$ and $m^L_{\lambda}\geq 0$ is the corresponding multiplicity.



Let $L$ be a Lie algebra and $H$ be a Lie subalgebra of $L$. 
If $A$ is an $L$-algebra, then by restricting the action $A$ can be regarded as a $H$-algebra.
In this case we say that $A$ is an $L$-algebra where $L$ acts on it as the Lie algebra $H$ and we identify the $T_L$-ideal $\Id^L(A)$ and the $T_H$-ideal $\Id^H(A)$, i.e., in $\Id^L(A)$ we omit the differential identities  $x^\gamma \equiv 0$, for all $\gamma \in L\backslash  H$.

Notice that any algebra $A$ can be regarded as $L$-algebra by let $L$ acting on $A$ trivially, i.e., $L$ acts on $A$ as the trivial Lie algebra. Hence the theory of differential identities generalizes the ordinary theory of polynomial identities.

We denote by $P_n$ the space of  multilinear ordinary polynomials in $x_1,\dots,x_n$ and by $\Id(A)$ the $T$-ideal of the free algebra $F\langle X\rangle$ of polynomial identities of $A$. We also write $c_n(A)$ for the $n$th codimension of $A$ and $\chi_n(A)$ for the $n$th cocharacter of $A$. Since the field $F$ is of characteristic zero, we have $\chi_n(A)=\sum_{\lambda\vdash n} m_{\lambda}\chi_{\lambda}$, where $m_\lambda\geq 0$ is the multiplicity of $\chi_\lambda$ in the given decomposition.

Since $U(L)$ is an algebra with unit, we can identify in a natural way $P_n$ with a subspace of $P_n^L$. Hence $P_n\subseteq P^L_n$ and $P_n\cap \Id(A)=P_n\cap \Id^L(A)$.  As a consequence we have the following relations.
\begin{rmk}For all $n\geq 1$,
\label{Rmk Codimensions and multiplicity}
\begin{enumerate}
\item  $c_n(A)\leq c_n^L(A)$;
\item  $m_\lambda\leq m_\lambda^L$, for any $\lambda\vdash n$.
\end{enumerate}
\end{rmk}

Recall that if $A$ is an $L$-algebra then the variety of algebras with derivations generated by $A$ is denoted by $\var ^{L}(A)$ and is called $L$-variety. The growth of $\mathcal{V}= \var ^{L}(A)$ is the growth of the sequence $c_{n}^{L}(\mathcal{V})=c_{n}^{L}(A)$, $n=1,2,\dots$. 

We say that the $L$-variety $\mathcal{V}$ has polynomial growth if $c_{n}^{L}(\mathcal{V})$ is polynomially bounded and $\mathcal{V}$ has almost polynomial growth if  $c_{n}^{L}(\mathcal{V})$ is not polynomially bounded but every proper $L$-subvariety of $\mathcal{V}$ has polynomial growth.

\section{On algebras with derivations of polynomial growth}
\label{sec:Codimension bounded}
In this section we study some algebras with derivations whose sequence of differential codimension has linear growth.

Let first consider the algebra $C= F(e_{11}+e_{22})\oplus Fe_{12} $ where $e_{ij}$'s are the usual matrix units. The Lie algebra $\Der (C)$ of all derivations of $C$ is a 1-dimensional Lie algebra generated by $\varepsilon$ where
$$\varepsilon (\alpha(e_{11}+e_{22})+\beta e_{12})=\beta e_{12},$$
for all $\alpha,\beta \in F$.

Let $C ^\varepsilon$ denote the $L$-algebra $C$ where $L$ acts on it as the Lie algebra $\Der (C)$. Thus for any Lie algebra $L$, we have the following.

\begin{thm}
\label{Thm U^e}
\begin{enumerate}
\item $\Id^L(C^\varepsilon)=\langle [x,y], x^\varepsilon y^\varepsilon , x^{\varepsilon^2}- x ^\varepsilon \rangle_{T_L}$.
\vspace{1mm}

\item $c_n ^L(C^\varepsilon)=n+1$.

\vspace{1mm}

\item $\chi_n^L(C^\varepsilon)=2\chi_{(n)}+\chi_{(n-1,1)}$.
\end{enumerate} 
\end{thm}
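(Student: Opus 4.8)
The plan is to establish all three parts together, with part (1) as the core and parts (2)--(3) as consequences obtained by identifying an explicit spanning set for $P_n^L(C^\varepsilon)$ and computing the $S_n$-action on it. First I would verify that $C^\varepsilon$ indeed satisfies the three proposed generators: $[x,y]\equiv 0$ since $C$ is commutative; $x^\varepsilon y^\varepsilon\equiv 0$ because $\varepsilon$ maps $C$ into $Fe_{12}$ and $(Fe_{12})(Fe_{12})=0$; and $x^{\varepsilon^2}-x^\varepsilon\equiv 0$ because $\varepsilon$ restricted to $Fe_{12}$ is the identity, so $\varepsilon^2=\varepsilon$ on $C$. Hence $\langle [x,y], x^\varepsilon y^\varepsilon, x^{\varepsilon^2}-x^\varepsilon\rangle_{T_L}\subseteq\Id^L(C^\varepsilon)$. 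Since $\mathrm{char}\,F=0$, to prove the reverse inclusion it suffices to show that modulo the $T_L$-ideal $Q$ generated by these three polynomials, the multilinear space $P_n^L$ is spanned by at most $n+1$ elements, and then to exhibit $n+1$ multilinear differential polynomials that are linearly independent in $P_n^L(C^\varepsilon)$; this simultaneously proves (1) and the upper and lower bounds in (2).

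For the upper bound, I would argue that modulo $Q$ every multilinear differential monomial in $x_1,\dots,x_n$ can be reduced to a standard form. Using $[x,y]\equiv 0$ we may reorder variables, so a monomial is determined by the multiset of exponents $h\in\mathcal{B}$ attached to the variables; since $U(L)$ here is the polynomial algebra $F[\varepsilon]$, the only exponents that matter are $1,\varepsilon,\varepsilon^2,\dots$, and $x^{\varepsilon^k}\equiv x^\varepsilon$ for $k\geq 1$ modulo $Q$, so each variable carries either exponent $1$ or exponent $\varepsilon$. The relation $x^\varepsilon y^\varepsilon\equiv 0$ forces at most one variable to carry the exponent $\varepsilon$. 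Therefore modulo $Q$ the space $P_n^L$ is spanned by the single monomial $x_1x_2\cdots x_n$ together with the $n$ monomials $x_1\cdots x_{i-1}x_i^\varepsilon x_{i+1}\cdots x_n$ for $i=1,\dots,n$ — a total of $n+1$ elements — giving $c_n^L(C^\varepsilon)\leq n+1$. I would then check these $n+1$ elements are independent modulo $\Id^L(C^\varepsilon)$ by evaluating: $x_1\cdots x_n\mapsto$ a nonzero diagonal element, and each $x_1\cdots x_i^\varepsilon\cdots x_n$ on suitable choices (putting $e_{11}+e_{22}$ in most slots and $e_{12}$ where needed) yields $e_{12}$, and the linear functional reading off the $e_{12}$-entry separates them; together with the diagonal evaluation this gives $c_n^L(C^\varepsilon)=n+1$, hence (2), and also pins down $\Id^L(C^\varepsilon)=Q$, hence (1).

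For part (3), I would analyze the $S_n$-module structure of $P_n^L(C^\varepsilon)$ using the basis just found. The element $x_1\cdots x_n$ is $S_n$-invariant (it spans a copy of the trivial module $\chi_{(n)}$). The span $W$ of the $n$ monomials $w_i=x_1\cdots x_i^\varepsilon\cdots x_n$ is an $S_n$-submodule isomorphic to the natural permutation module $F S_n/S_{n-1}$ of dimension $n$, since $\sigma$ sends $w_i$ to $w_{\sigma(i)}$; this permutation module decomposes as $\chi_{(n)}\oplus\chi_{(n-1,1)}$. Summing, $\chi_n^L(C^\varepsilon)=2\chi_{(n)}+\chi_{(n-1,1)}$, which is (3). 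The one point requiring care — and the main obstacle — is making the reduction to standard form in the upper-bound step fully rigorous: one must confirm that the commutator relation together with the $U(L)$-action (which distributes $\varepsilon$ across a product via the Leibniz rule) does not produce, modulo $Q$, any monomials outside the claimed spanning set — in particular that applying $\varepsilon$ to a reordered product and then reducing never creates two $\varepsilon$-decorated variables that survive. This is handled by noting that any monomial with $\varepsilon$ applied is a linear combination of monomials each with the $\varepsilon$'s distributed among individual variables, and any term with two or more $\varepsilon$'s is killed by $x^\varepsilon y^\varepsilon\equiv 0$; the remaining bookkeeping is routine.
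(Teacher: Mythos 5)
Your proposal is correct. For parts (1) and (2) it follows essentially the same route as the paper: verify the three generators hold on $C^\varepsilon$ (commutativity, $\varepsilon(C)\subseteq Fe_{12}$ with $(Fe_{12})^2=0$, and $\varepsilon^2=\varepsilon$), reduce $P_n^L$ modulo the candidate $T_L$-ideal $Q$ to the $n+1$ monomials $x_1\cdots x_n$ and $x_1\cdots x_i^\varepsilon\cdots x_n$ (using commutativity to bring two $\varepsilon$-decorated variables together so that $x^\varepsilon w y^\varepsilon\in Q$, and $x^{\varepsilon^k}\equiv x^\varepsilon$), and then separate these by the evaluations $x_j=e_{11}+e_{22}$ and $x_k=e_{12}$. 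Where you genuinely diverge is part (3): the paper proves the cocharacter decomposition by exhibiting highest weight vectors attached to standard tableaux, namely $x^n$ and $x^\varepsilon x^{n-1}$ for $(n)$ and $(x^\varepsilon y-y^\varepsilon x)x^{n-2}$ for $(n-1,1)$, checking they are nonzero and independent modulo $\Id^L(C^\varepsilon)$, and then closing the argument with the degree count $2\cdot 1+1\cdot(n-1)=n+1=c_n^L(C^\varepsilon)$. You instead observe that the basis itself is $S_n$-stable: $F(x_1\cdots x_n)$ is a trivial module and the span of the $w_i$ is the natural permutation module $\chi_{(n)}\oplus\chi_{(n-1,1)}$, so the character is read off directly. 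Your route is more self-contained here and avoids any evaluation arguments for the multiplicities; the paper's highest-weight-vector method is the one that scales to the later, harder cases in the paper (e.g.\ $UT_2^\delta$), where no basis with such a transparent $S_n$-action is available. One small remark: your closing worry about the Leibniz rule creating stray monomials is not really an issue, since elements of $P_n^L$ are by definition already linear combinations of monomials $x_{\sigma(1)}^{h_1}\cdots x_{\sigma(n)}^{h_n}$ with each $h_i$ a basis element of $U(L)=F[\varepsilon]$, and the reduction only ever uses substitution instances of the generators together with the relation $x^{\varepsilon^k}\equiv x^\varepsilon$; you resolve this correctly.
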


\begin{proof}
Let $Q=\langle [x,y], x^\varepsilon y^\varepsilon , x^{\varepsilon^2}- x ^\varepsilon \rangle_{T_L}$. It easily checked that $Q \subseteq \Id^L(C^\varepsilon)$. Since $x^\varepsilon w y^\varepsilon \in Q$, where $w$ is (eventually trivial) monomial of $F\langle X|L\rangle$, we may write $f$, modulo $Q$, as a linear combination of the polynomials
$$x_1 \dots x_n, \; x_k ^\varepsilon x_{i_1}\dots x_{i_{n-1}}, \quad i_1<\dots < i_{n-1}.$$
We next show that these polynomials are linearly independent modulo $\Id^L(C^\varepsilon)$. Suppose that 
$$f=\alpha x_1 \dots x_n+ \sum_{k=1}^{n} \beta_k x_{i_1}\dots x_{i_{n-1}}x_k ^\varepsilon \equiv 0 \; (\bmod P_n ^L \cap \Id^L(C^\varepsilon)).$$
By making the evaluation $x_j=e_{11}+e_{22}$, for all $j=1,\dots,n$, we get $\alpha =0$. Also for fixed $k$, the evaluation $x_k= e_{12}$ and $x_j=e_{11}+e_{22}$ for $j\neq k$ gives $\beta_k=0$. Thus the above polynomials are linearly independent modulo $P_n ^L \cap \Id^L(C^\varepsilon)$. Since $P_n ^L \cap Q \subseteq P_n ^L \cap \Id^L(C^\varepsilon)$, this proves that $\Id^L(C^\varepsilon)=Q$ and the above polynomials are a basis of $P_n ^L$ modulo $P_n ^L \cap \Id^L(C^\varepsilon)$. Hence $c_n ^L(C^\varepsilon)=n+1$.

We now determine the decomposition of the $n$th differential cocharacter of this algebra. Suppose that $\chi_{n}^{L}(C^\varepsilon)=\sum_{\lambda\vdash n} m_{\lambda}\chi_{\lambda}$. Let consider the standard tableau
$$T_{(n)}=\begin{array}{|c|c|c|c|}\hline
1 & 2 & \dots &n \\ \hline
\end{array}\;$$
and the monomials
\begin{equation}
\label{hwv of T_(n)}
f_{(n)}=x^n, \qquad f_{(n)}^\varepsilon =x^\varepsilon x^{n-1}
\end{equation}
obtained from the essential idempotents corresponding to the tableau $T_{(n)}$ by identifying all the elements in the row. Clearly $f_{(n)}$ and $f_{(n)}^\varepsilon$ are not identities of $C^\varepsilon$. Moreover, they are linear independent modulo $\Id^L(C^\varepsilon)$. In fact, suppose that $\alpha f_{(n)} + \beta f_{(n)}^\varepsilon \equiv 0 (\bmod \; \Id^L(C^\varepsilon))$. By making the evaluation $x=e_{11}+e_{22}$ we get $\alpha=0$. Moreover, if we evaluate $x=e_{11}+e_{22}+e_{12}$, we obtain $\beta=0$. Thus it follows that $m_{(n)}\geq 2$.

Since $\deg \chi_{(n)}=1$ and $\deg \chi_{(n-1,1)}=n-1$, if we find a differential polynomial corresponding to the partition $(n-1,1)$ which is not a differential identity of $C^\varepsilon$, we may conclude that $\chi_n^L(C^\varepsilon)=2\chi_{(n)}+\chi_{(n-1,1)}$. 

Let consider the polynomial
$$f_{(n-1,1)}=(x^\varepsilon y - y^\varepsilon x)x^{n-2}$$
obtained from the essential idempotent corresponding to the standard tableau
\begin{equation*}
T_{(n-1,1)}= \begin{array}{|c|c|c|c|}\hline
1 & 3 & \dots & n \\ \hline
2 \\ \cline{1-1}
\end{array}\:
\end{equation*}
by identifying all the elements in each row of the tableau. Evaluating $x=e_{11}+e_{22}$ and $y=e_{12}$ we get $f_{(n-1,1)}=-e_{12}\neq 0$ and $f_{(n-1,1)}$ is not a differential identity of $C^\varepsilon$. Thus the claim is proved.
\end{proof}

\

Let now consider the algebra $M_1=F e_{22} \oplus Fe_{12}$ and let $\varepsilon$ and $\delta$ be  derivations of $M_1$ such that
\begin{equation}
\label{Derivation of M_1}
\varepsilon (\alpha e_{22}+ \beta e_{12})= \beta e_{12}, \qquad \delta (\alpha e_{22}+ \beta e_{12})= \alpha e_{12},
\end{equation}
for all $\alpha, \beta \in F$.
\begin{Lem}
$\Der(M_1)$ is a $2$-dimensional metabellian Lie algebra spanned by $\varepsilon$ and $ \delta$ defined in \eqref{Derivation of M_1}.
\end{Lem}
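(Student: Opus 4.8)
The plan is to compute $\Der(M_1)$ directly from the multiplication table of $M_1 = Fe_{22}\oplus Fe_{12}$, which is the two-dimensional algebra with $e_{22}^2 = e_{22}$, $e_{22}e_{12} = 0$, $e_{12}e_{22} = e_{12}$, and $e_{12}^2 = 0$. A derivation $\partial$ is determined by its values on the basis, so I would write $\partial(e_{22}) = a e_{22} + b e_{12}$ and $\partial(e_{12}) = c e_{22} + d e_{12}$ with $a,b,c,d \in F$, and then impose the Leibniz rule on the three nontrivial products $e_{22}^2 = e_{22}$, $e_{12}e_{22} = e_{12}$, and $e_{22}e_{12} = 0$ (the identity $e_{12}^2 = 0$ gives nothing new). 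Applying $\partial$ to $e_{22}^2 = e_{22}$ forces $a = 0$; applying it to $e_{12}e_{22} = e_{12}$ forces $c = 0$; the relation $e_{22}e_{12} = 0$ is then automatically satisfied. Hence $\partial(e_{22}) = b e_{12}$ and $\partial(e_{12}) = d e_{12}$, i.e.\ $\partial = b\,\delta + d\,\varepsilon$ in the notation of \eqref{Derivation of M_1}. This shows $\Der(M_1) = \Span\{\varepsilon,\delta\}$, which is at most $2$-dimensional; checking that $\varepsilon$ and $\delta$ are genuinely derivations (a one-line verification on the table) and that they are linearly independent (they disagree on $e_{22}$) gives $\dim \Der(M_1) = 2$.

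It remains to identify the Lie algebra structure. I would compute the single bracket $[\varepsilon,\delta] = \varepsilon\delta - \delta\varepsilon$ on the basis: on $e_{12}$ both composites vanish since $\varepsilon(e_{12}) = e_{12}$ and $\delta(e_{12}) = e_{12}$ land back in $Fe_{12}$ where $\delta$ kills and... — more carefully, $\varepsilon\delta(e_{22}) = \varepsilon(e_{12}) = e_{12}$ while $\delta\varepsilon(e_{22}) = \delta(0) = 0$, so $[\varepsilon,\delta](e_{22}) = e_{12}$; and on $e_{12}$ one gets $\varepsilon\delta(e_{12}) = \varepsilon(e_{12}) = e_{12}$ versus $\delta\varepsilon(e_{12}) = \delta(e_{12}) = e_{12}$, so $[\varepsilon,\delta](e_{12}) = 0$. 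Thus $[\varepsilon,\delta] = \delta$. Consequently $\Der(M_1)$ is the non-abelian two-dimensional Lie algebra, whose derived subalgebra $[\Der(M_1),\Der(M_1)] = F\delta$ is one-dimensional and abelian, so $\Der(M_1)$ is metabelian (solvable of derived length $2$).

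I do not anticipate any genuine obstacle here: the entire argument is a finite linear-algebra computation, and the only thing to be careful about is bookkeeping the Leibniz constraints correctly and getting the sign and direction of the bracket $[\varepsilon,\delta] = \delta$ right (as opposed to $-\delta$ or $\varepsilon$), since that is what will be used later when analyzing $\Id^L(M_1)$ under the various subalgebra actions. If anything, the mild subtlety is simply to present the Leibniz computations cleanly rather than listing all four products, noting that $e_{12}^2 = 0$ is redundant.
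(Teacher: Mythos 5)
Your overall strategy is the same as the paper's: compute $\Der(M_1)$ directly from the Leibniz rule on the structure constants of $M_1$, then check $[\varepsilon,\delta]=\delta$ to identify the two-dimensional non-abelian (metabelian) Lie algebra. The conclusion is correct, but the Leibniz bookkeeping as you describe it would not actually close the argument. Writing $\partial(e_{22})=ae_{22}+be_{12}$ and $\partial(e_{12})=ce_{22}+de_{12}$, the relation $e_{22}^2=e_{22}$ does force $a=0$, but the relation $e_{12}e_{22}=e_{12}$ does \emph{not} force $c=0$: applying $\partial$ gives $\partial(e_{12})e_{22}+e_{12}\partial(e_{22})=ce_{22}+de_{12}+ae_{12}$, and equating this with $\partial(e_{12})$ only re-derives $a=0$, putting no constraint whatsoever on $c$. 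The constraint $c=0$ comes precisely from the relation you set aside as ``automatically satisfied'': $0=\partial(e_{22}e_{12})=\partial(e_{22})e_{12}+e_{22}\partial(e_{12})=ce_{22}$, since the first summand vanishes because $M_1e_{12}=0$ (equivalently, $e_{12}^2=0$ also yields $c=0$, so that relation is not redundant either). As written, your two chosen identities give only $a=0$ and would leave a three-dimensional space of candidate derivations. This is exactly the computation the paper runs --- it uses $e_{22}e_{12}=0$ to get $\gamma(e_{12})\in Fe_{12}$ and then $e_{12}e_{22}=e_{12}$ to get $\gamma(e_{22})\in Fe_{12}$ --- so the fix is simply to swap in the correct relation.

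A second, self-cancelling slip: in the bracket computation you write $\varepsilon\delta(e_{12})=\varepsilon(e_{12})=e_{12}$ and $\delta\varepsilon(e_{12})=\delta(e_{12})=e_{12}$; in fact $\delta(e_{12})=0$, so both composites are $0$. The conclusion $[\varepsilon,\delta](e_{12})=0$, and hence $[\varepsilon,\delta]=\delta$ and metabelianity, is nevertheless correct and agrees with the paper.
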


\begin{proof}
Let consider the Lie algebra $D$ spanned by $\varepsilon$ and $\delta$. Since $[\varepsilon,\delta]=\delta$, $D$ is a $2$-dimensional metabelian Lie algebra and $D\subseteq \Der(M_1)$.

Let now consider $\gamma\in \Der(M_1)$. Notice that
$\gamma(e_{22}e_{12})=\gamma(e_{22})e_{12}+e_{22}\gamma(e_{12})=e_{22}\gamma(e_{12}).$
Since $\gamma(e_{22}e_{12})=0$, it follows that 
$$\gamma(e_{12})=\alpha e_{12},$$
for some $\alpha\in F$. On the other hand, $\gamma(e_{12})=\gamma(e_{12}e_{22})=\alpha e_{12}+e_{12} \gamma(e_{22})$. Thus it follows that $e_{12}\gamma(e_{22})=0$. Hence 
$$\gamma(e_{22})=\beta e_{12},$$
for some $\beta \in F$. Thus we have that $\gamma =\alpha \varepsilon + \beta \delta\in D$ and the claim is proved.
\end{proof}

Similarly, if we consider the algebra $M_2=F e_{11} \oplus Fe_{12}$ and we assume that $\varepsilon$ and $\delta$ are derivation of $M_2$ such that
\begin{equation}
\label{Derivation of M_2}
\varepsilon (\alpha e_{11}+ \beta e_{12})= \beta e_{12}, \qquad \delta (\alpha e_{11}+ \beta e_{12})= \alpha e_{12},
\end{equation}
for all $\alpha,\beta \in F$, then we have the following.

\begin{Lem}
$\Der(M_2)$ is a $2$-dimensional metabellian Lie algebra spanned by $\varepsilon$ and $ \delta$ defined in \eqref{Derivation of M_2}.
\end{Lem}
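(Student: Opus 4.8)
The plan is to follow verbatim the structure of the preceding lemma for $M_1$, since $M_2$ is the ``mirror image'' of $M_1$. First I would set $D=\Span\{\varepsilon,\delta\}$ with $\varepsilon,\delta$ as in \eqref{Derivation of M_2} and record the multiplication table of $M_2$, namely $e_{11}^2=e_{11}$, $e_{11}e_{12}=e_{12}$, $e_{12}e_{11}=e_{12}^2=0$. A direct check against the Leibniz rule shows $\varepsilon,\delta\in\Der(M_2)$, and computing on the basis $\{e_{11},e_{12}\}$ gives $[\varepsilon,\delta](e_{11})=\varepsilon(e_{12})=e_{12}=\delta(e_{11})$ and $[\varepsilon,\delta](e_{12})=0=\delta(e_{12})$, so $[\varepsilon,\delta]=\delta$. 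Hence $D$ is a $2$-dimensional metabelian Lie algebra contained in $\Der(M_2)$.

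For the reverse inclusion I would take an arbitrary $\gamma\in\Der(M_2)$ and write $\gamma(e_{11})=\alpha e_{11}+\beta e_{12}$, $\gamma(e_{12})=\mu e_{11}+\nu e_{12}$. Applying the Leibniz rule to $e_{12}e_{11}=0$ gives $\gamma(e_{12})e_{11}+e_{12}\gamma(e_{11})=0$; the second term vanishes because $e_{12}e_{11}=e_{12}^2=0$, so $\gamma(e_{12})e_{11}=\mu e_{11}=0$, forcing $\mu=0$ and thus $\gamma(e_{12})=\nu e_{12}$. Next, applying the Leibniz rule to $e_{11}^2=e_{11}$ gives $\gamma(e_{11})=\gamma(e_{11})e_{11}+e_{11}\gamma(e_{11})=2\alpha e_{11}+\beta e_{12}$, whence $\alpha=0$ and $\gamma(e_{11})=\beta e_{12}$. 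Comparing with $(\nu\varepsilon+\beta\delta)(e_{11})=\beta e_{12}$ and $(\nu\varepsilon+\beta\delta)(e_{12})=\nu e_{12}$ we conclude $\gamma=\nu\varepsilon+\beta\delta\in D$, so $\Der(M_2)=D$.

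The argument is entirely routine; there is no real obstacle beyond keeping track of which products in $M_2$ vanish. The one point worth stressing is that, unlike in $M_1$ where the vanishing product was $e_{22}e_{12}$ and $e_{22}$ acted as a right identity on $e_{12}$, here the vanishing product is $e_{12}e_{11}$ and $e_{11}$ acts as a left identity on $e_{12}$, so the left and right factors in the Leibniz computations are swapped relative to the $M_1$ proof.
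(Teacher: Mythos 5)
Your proof is correct and is exactly the mirror-image argument the paper intends: the paper proves the statement only for $M_1$ (using $e_{22}e_{12}=0$ and $e_{12}e_{22}=e_{12}$) and leaves the $M_2$ case as ``similar,'' which is what you carry out via $e_{12}e_{11}=0$ and the Leibniz rule. Your use of the idempotent relation $e_{11}^2=e_{11}$ to force $\alpha=0$, where the direct analogue of the paper's argument would use $e_{11}e_{12}=e_{12}$, is an inessential variation; both computations give the same conclusion.
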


Let $L$ be any Lie algebra. We shall denote by $M_1$ and $M_2$ the $L$-algebras $M_1$ and $M_2$ where $L$ acts trivially on them. Since $x^\gamma\equiv 0$ for all $\gamma \in L$, in this case we are dealing with ordinary identities. Thus we have the following result.

\begin{thm}{\cite[Lemma 3]{GiambrunoLaMattina2005}}
\label{thmM}
\begin{enumerate}
\item $Id^L(M_1)=\langle x[y,z]\rangle_{T_L}$ and $Id^L(M_2)=\langle[x,y] z \rangle_{T_L}$.
\vspace{1mm}

\item $c_n ^L(M_1)=c_n ^L(M_2)=n$.
\vspace{1mm}

\item $\chi_n^L(M_1)=\chi_n^L(M_2)=\chi_{(n)}+\chi_{(n-1,1)}$.
\end{enumerate}
\end{thm}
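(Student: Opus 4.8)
The plan is to reduce everything to the ordinary theory of polynomial identities. Since $L$ acts trivially on $M_1$ and on $M_2$, we have $x^\gamma\equiv 0$ for every $\gamma\in L$, and so, under the identification fixed in Section~\ref{sec:Preliminaries}, the differential identities of $M_i$ coincide with its ordinary polynomial identities, with $c_n^L(M_i)=c_n(M_i)$ and $\chi_n^L(M_i)=\chi_n(M_i)$. Hence all three statements are precisely the content of \cite[Lemma~3]{GiambrunoLaMattina2005}. For completeness I would give a self-contained argument along the following lines.

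First I would pass from $M_2$ to $M_1$. The map $e_{11}\mapsto e_{22}$, $e_{12}\mapsto e_{12}$ is an isomorphism $M_2\to M_1^{\mathrm{op}}$, and the reversal map $x_{\sigma(1)}\cdots x_{\sigma(n)}\mapsto x_{\sigma(n)}\cdots x_{\sigma(1)}$ is an $S_n$-module automorphism of $P_n$ which carries $P_n\cap\Id(A)$ onto $P_n\cap\Id(A^{\mathrm{op}})$ and sends $x[y,z]$ to $-[x,y]z$. Thus the three assertions for $M_2$ follow from those for $M_1$, and it is enough to treat $M_1$.

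For $M_1$ a direct computation gives $[M_1,M_1]=Fe_{12}$ and $M_1e_{12}=0$, hence $M_1[M_1,M_1]=0$, so $Q:=\langle x[y,z]\rangle_{T}\subseteq\Id(M_1)$. The core of the proof is to show $Q=\Id(M_1)$. To this end I would straighten: modulo $Q$ a product $w[x_i,x_j]w'$ with $w$ a nonempty word is zero, which allows one to reorder freely the letters occurring after the first one and, using $x_ax_bw\equiv x_bx_aw+[x_a,x_b]w$, to move $x_1$ to the front at the cost of a single leading commutator. This shows that, modulo $Q$, $P_n$ is spanned by the $n$ polynomials $x_1x_2\cdots x_n$ and $[x_j,x_1]\,x_{k_1}\cdots x_{k_{n-2}}$, where $j\in\{2,\dots,n\}$ and $k_1<\cdots<k_{n-2}$ runs over $\{2,\dots,n\}\setminus\{j\}$. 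Then I would show these are linearly independent modulo $\Id(M_1)$ by the evaluations $x_1=\cdots=x_n=e_{22}$ (which isolates the coefficient of $x_1\cdots x_n$) and, for each $j$, $x_j=e_{12}$ with the remaining variables equal to $e_{22}$ (here $e_{12}e_{22}=e_{12}$ is what lets the second family survive). Together these give $\Id(M_1)=Q$ and $c_n(M_1)=n$.

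For the cocharacter I would use that $\sum_{\lambda\vdash n}m_\lambda\deg\chi_\lambda=c_n(M_1)=n$ together with $\deg\chi_{(n)}=1$ and $\deg\chi_{(n-1,1)}=n-1$: it then suffices to exhibit highest weight vectors of shapes $(n)$ and $(n-1,1)$ that are not identities of $M_1$, after which $m_{(n)}\ge 1$ and $m_{(n-1,1)}\ge 1$ force $m_{(n)}=m_{(n-1,1)}=1$ and $m_\lambda=0$ for every other $\lambda$. For $(n)$ take $x^n$, nonzero at $x=e_{22}$; for $(n-1,1)$ take $(x_1x_2-x_2x_1)x_1^{n-2}$, whose value at $x_1=e_{22}$, $x_2=e_{12}$ is $-e_{12}\neq 0$. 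The only genuinely nonroutine point is the straightening-plus-independence step establishing $\Id(M_1)=Q$; everything else is bookkeeping, and of course the whole statement may simply be quoted from \cite{GiambrunoLaMattina2005} via the trivial-action reduction above.
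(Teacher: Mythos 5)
Your proposal is correct and its first paragraph is exactly what the paper does: since $L$ acts trivially, $x^\gamma\equiv 0$ reduces everything to ordinary identities, and the three statements are then quoted from \cite[Lemma~3]{GiambrunoLaMattina2005} with no further argument. The self-contained sketch you add (straightening modulo $\langle x[y,z]\rangle_T$ to the spanning set $x_1\cdots x_n$, $[x_j,x_1]x_{k_1}\cdots x_{k_{n-2}}$, the evaluations at $e_{22}$ and $e_{12}$, the opposite-algebra reduction for $M_2$, and the degree count $1+(n-1)=n$ for the cocharacter) is sound, but it goes beyond what the paper itself records.
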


Let now denote by $M_1 ^\varepsilon$ and $M_2^\varepsilon$ the $L$-algebras $M_1$ and $M_2$ where $L$ acts on them as the $1$-dimensional Lie algebra spanned by the derivation $\varepsilon$ defined in \eqref{Derivation of M_1} and \eqref{Derivation of M_2}, respectively.

\begin{thm} \label{thmM^e}
\begin{enumerate}
\item $Id^L(M_1^\varepsilon)=\langle  xy^{\varepsilon}, x^\varepsilon y- y^\varepsilon x -[x,y], x^{\varepsilon^{2}}-x^{\varepsilon}\rangle_{T_{L}}$ and $Id^L(M_2^\varepsilon)=\langle  x^{\varepsilon}y, x y^\varepsilon - y x^\varepsilon - [x,y], x^{\varepsilon^{2}}-x^{\varepsilon}\rangle_{T_{L}}$.

\vspace{1mm}
\item  $c_n ^L(M_1 ^\varepsilon)=c_n ^L(M_2^\varepsilon)=n+1$.

\vspace{1mm}
\item $\chi_n^L(M_1^\varepsilon)=\chi_n^L(M_2^\varepsilon)=2\chi_{(n)}+\chi_{(n-1,1)}$.
\end{enumerate}
\end{thm}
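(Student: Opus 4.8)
The plan is to mimic closely the proof of Theorem \ref{Thm U^e}, handling $M_1^\varepsilon$ in detail and obtaining $M_2^\varepsilon$ by the obvious left-right (opposite algebra) symmetry. First I would set $Q_1=\langle xy^{\varepsilon}, x^\varepsilon y- y^\varepsilon x -[x,y], x^{\varepsilon^{2}}-x^{\varepsilon}\rangle_{T_{L}}$ and check the easy inclusion $Q_1\subseteq \Id^L(M_1^\varepsilon)$: since $\varepsilon(e_{22})=0$, $\varepsilon(e_{12})=e_{12}$, $e_{12}e_{12}=0$, $e_{22}e_{12}=e_{12}$, $e_{12}e_{22}=0$, each generator is verified by a direct substitution on the two basis elements $e_{22},e_{12}$ of $M_1$. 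Next comes the spanning step: working modulo $Q_1$, I would show that every multilinear differential polynomial reduces to a linear combination of
$$x_1\cdots x_n,\qquad x_k^{\varepsilon} x_{i_1}\cdots x_{i_{n-1}},\quad i_1<\cdots<i_{n-1}.$$
The relation $xy^{\varepsilon}\equiv 0$ lets me kill any monomial in which an $\varepsilon$-variable is not in the leftmost position, and (together with its consequences $x w y^\varepsilon \equiv 0$ for a monomial $w$, which follow since $xy^\varepsilon$ generates a $T_L$-ideal) forces at most one $\varepsilon$ per monomial; the relation $x^{\varepsilon^2}-x^\varepsilon\equiv0$ removes higher powers of $\varepsilon$; the relation $x^\varepsilon y - y^\varepsilon x - [x,y]\equiv 0$, i.e. $x^\varepsilon y \equiv y^\varepsilon x + xy - yx$, lets me reorder the variables after a leading $\varepsilon$-variable (noting $xy\equiv yx$ modulo lower terms once the non-$\varepsilon$ part is commutative — actually $[x,y]$ is not in $Q_1$, so more care is needed here: the commutator terms reassemble into the stated basis since $x_{i_1}\cdots x_{i_{n-1}}$ with the remaining variable carrying $\varepsilon$ already spans all orderings up to commutators, and commutators of ordinary variables with an $\varepsilon$-head can be absorbed). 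The count $n+1$ then matches the basis $\{x_1\cdots x_n\}\cup\{x_k^\varepsilon x_{i_1}\cdots x_{i_{n-1}} : k=1,\dots,n\}$ wait — that is $n+1$ only if the $\varepsilon$-monomials with different $k$ collapse; I must double-check, and indeed they do not all collapse, so I should instead argue the span has the $n+1$ elements $x_1\cdots x_n$ and $x^\varepsilon_{\sigma(1)}x_{i_1}\cdots x_{i_{n-1}}$ where by the reordering relation only the choice of which variable is the $\varepsilon$-head matters up to commutators, giving exactly $n$ such, for a total of $n+1$.

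For linear independence modulo $\Id^L(M_1^\varepsilon)$ I would take a putative dependence $\alpha x_1\cdots x_n + \sum_{k=1}^n \beta_k\, x_k^\varepsilon x_{i_1^{(k)}}\cdots x_{i_{n-1}^{(k)}}\equiv 0$ and evaluate: setting all $x_j=e_{22}$ gives $e_{22}^n=e_{22}$ from the first monomial while every $\varepsilon$-monomial vanishes (its head becomes $\varepsilon(e_{22})=0$), forcing $\alpha=0$; then for fixed $k$, setting $x_k=e_{12}$ and $x_j=e_{22}$ for $j\ne k$ gives $\varepsilon(e_{12})e_{22}\cdots e_{22}=e_{12}e_{22}\cdots=e_{12}\ne 0$ only for the term with $\varepsilon$-head $x_k$ — here I must confirm that $e_{12}e_{22}=0$ in $M_1$ so that only the $k$-th term survives and indeed $e_{22}e_{12}=e_{12}$, so reading the product left to right from the head $e_{12}$ I get $e_{12}e_{22}=0$; this suggests the correct surviving arrangement has the head followed by the $e_{22}$'s producing $e_{12}$ only when the head sits to the left and all subsequent factors are $e_{22}$ with $e_{22}$ acting on the right of $e_{12}$ giving zero — so in fact the nonzero evaluation requires putting $e_{12}$ \emph{last}, meaning the surviving basis monomials are really of the form $x_{i_1}\cdots x_{i_{n-1}} x_k^\varepsilon$ as in Theorem \ref{Thm U^e}. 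I would therefore restate the basis in that form and redo the evaluation accordingly, which then cleanly gives $\beta_k=0$ for each $k$. This yields $\Id^L(M_1^\varepsilon)=Q_1$, the basis of $P_n^L$ modulo identities, and $c_n^L(M_1^\varepsilon)=n+1$; the case $M_2^\varepsilon$ follows by applying the anti-automorphism $x^h\mapsto x^h$ reversing products, since $M_2=M_1^{\mathrm{op}}$ as an $\varepsilon$-algebra (up to relabeling $e_{22}\leftrightarrow e_{11}$) and $\varepsilon$ is intertwined accordingly.

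For the cocharacter in part (3), I would follow Theorem \ref{Thm U^e} verbatim: exhibit highest weight vectors realizing $m_{(n)}\ge 2$ and $m_{(n-1,1)}\ge 1$, and then observe that $c_n^L(M_1^\varepsilon)=n+1 = 2\deg\chi_{(n)} + \deg\chi_{(n-1,1)} = 2\cdot 1 + (n-1)$ forces equality and no other partitions. Concretely, the two monomials $f_{(n)}=x^n$ and $f_{(n)}^\varepsilon=x^{n-1}x^\varepsilon$ (note the $\varepsilon$ must go on the \emph{last} copy, by the $M_1$ multiplication) associated to the one-row tableau $T_{(n)}$ are linearly independent modulo $\Id^L(M_1^\varepsilon)$ — evaluate $x=e_{22}$ to kill $f^\varepsilon_{(n)}$ and get $\alpha=0$, then $x=e_{22}+e_{12}$ to get $\beta=0$ — so $m_{(n)}\ge 2$; and the polynomial $f_{(n-1,1)}=x^{n-2}(xy^\varepsilon - yx^\varepsilon)$ wait, I need the correct essential idempotent arrangement compatible with $M_1$, namely $f_{(n-1,1)}=x^{n-2}\big(y x^\varepsilon - x y^\varepsilon\big)$ or its mirror; evaluating $x=e_{22}$, $y=e_{12}$ should give a nonzero multiple of $e_{12}$, showing $m_{(n-1,1)}\ge 1$. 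Combined with the codimension count the decomposition is exactly $2\chi_{(n)}+\chi_{(n-1,1)}$, and the same for $M_2^\varepsilon$ by symmetry. The main obstacle, as the discussion above shows, is purely bookkeeping: getting the side on which $\varepsilon$-variables and the extra commutators sit exactly right in the reduced monomials, since $M_1$ (unlike $C$) is not unital and $e_{12}e_{22}=0\ne e_{22}e_{12}$ breaks the left-right symmetry within $M_1$ itself; once the correct normal form $x_{i_1}\cdots x_{i_{n-1}}x_k^\varepsilon$ is fixed, every verification is a one-line substitution.
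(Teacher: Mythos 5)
Your argument breaks on the multiplication table of $M_1=Fe_{22}\oplus Fe_{12}$: you assert $e_{22}e_{12}=e_{12}$ and $e_{12}e_{22}=0$, but $e_{ij}e_{kl}=\delta_{jk}e_{il}$ gives the opposite, $e_{12}e_{22}=e_{12}$ and $e_{22}e_{12}=0$, so that $M_1e_{12}=0$ while $e_{12}M_1\neq 0$. This is not mere bookkeeping: it is precisely why $xy^{\varepsilon}$ and $x[y,z]$ are identities of $M_1^{\varepsilon}$, and it forces every $\varepsilon$-variable of a non-vanishing monomial into the \emph{leftmost} position. Your ``corrected'' normal form $x_{i_1}\cdots x_{i_{n-1}}x_k^{\varepsilon}$, as well as the highest weight vectors $f_{(n)}^{\varepsilon}=x^{n-1}x^{\varepsilon}$ and $f_{(n-1,1)}=x^{n-2}(yx^{\varepsilon}-xy^{\varepsilon})$, are substitution instances of the generator $xy^{\varepsilon}$, hence lie in $Q_1$ and vanish identically on $M_1^{\varepsilon}$. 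Consequently your linear-independence evaluations only verify $0=0$, your spanning set would yield $c_n^L(M_1^{\varepsilon})=1$ rather than $n+1$, and the witnesses for $m_{(n)}\geq 2$ and $m_{(n-1,1)}\geq 1$ prove nothing. The correct one-row vectors are $f_{(n)}=x^n$ and $f_{(n)}^{\varepsilon}=x^{\varepsilon}x^{n-1}$ (evaluate the idempotent $x=e_{22}+e_{12}$ to get $e_{12}(e_{22}+e_{12})=e_{12}\neq 0$), and the correct spanning set consists of a \emph{single} $\varepsilon$-monomial $x_1^{\varepsilon}x_2\cdots x_n$ together with the $n$ ordinary monomials $x_jx_{i_1}\cdots x_{i_{n-1}}$, $i_1<\cdots<i_{n-1}$: since $[x,y]\equiv x^{\varepsilon}y-y^{\varepsilon}x$ and $x^{\varepsilon}y^{\varepsilon},\,x^{\varepsilon}[y,z]\in Q_1$, all left-headed $\varepsilon$-monomials collapse to one modulo ordinary monomials, while the ordinary part satisfies only $x[y,z]\equiv 0$, so the leading ordinary variable still matters. (Your alternative count ``one ordinary monomial plus $n$ $\varepsilon$-monomials'' can be made to work, but only with the $\varepsilon$-heads on the left; as written it is inconsistent.)

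Two further points of comparison with the paper's proof. First, for $m_{(n-1,1)}\geq 1$ the paper exhibits no polynomial at all: it invokes Remark \ref{Rmk Codimensions and multiplicity} together with Theorem \ref{thmM}, since the ordinary cocharacter of $M_1$ already contains $\chi_{(n-1,1)}$ with multiplicity $1$ and multiplicities cannot decrease in passing to the differential cocharacter; if you want an explicit witness it must carry its commutator at the left end, e.g.\ $[x,y]x^{n-2}$. Second, your reduction of $M_2^{\varepsilon}$ to $M_1^{\varepsilon}$ via the opposite algebra is a sound idea and matches the paper's ``a similar proof holds''; but note that under that symmetry the roles of left and right are exchanged, which is exactly the distinction your proposal loses track of.
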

\begin{proof}
If $Q$ is the $T_L$-ideal generated by the polynomials $xy^{\varepsilon}, x^\varepsilon y- y^\varepsilon x -[x,y], x^{\varepsilon^{2}}-x^{\varepsilon}$, then it easy to check that $Q\subseteq \Id^L (M_1 ^\varepsilon)$.

Since $x^\varepsilon y^\varepsilon, x[y,z]\in Q$, the polynomials
$$x_j x_{i_1}\dots x_{i_n-1}, \; x_1 ^\varepsilon x_2 \dots x_n,  \quad i_1<\dots < i_{n-1},$$
span $P_n ^L$ modulo $P_n ^L \cap Q$ and we claim that they are linearly independent modulo $\Id^L (M_1 ^\varepsilon)$. In fact, let $f\in P_n ^L \cap \Id^L (M_1 ^\varepsilon)$ be a linear combination of these polynomials, i.e.,
$$f=\sum_{j=1}^{n} \alpha_j x_j x_{i_1}\dots x_{i_n-1} + \beta x_1 ^\varepsilon x_2 \dots x_n \equiv 0 \; (\bmod\; P_n ^L \cap \Id^l (M_1 ^\varepsilon)).$$
For fixed $j \neq 1$, from the substitutions $x_j=e_{12}$ and $x_k=e_{22}$ for $k\neq j$ we get $\alpha_j=0$, $j\neq 1$. By making the evaluation $x_k=e_{22}$ for all $k=1,\dots, n$, we obtain $\alpha_1=0$. Finally by evaluating $x_1= e_{12}$ and $x_k = e _{22}$ for $k\neq 1$, we get $\beta =0$. Thus the above polynomials are linearly independent modulo $P_n ^L \cap \Id^l (M_1 ^\varepsilon)$. Since $P_n ^L \cap Q \subseteq P_n ^L \cap \Id^L(M_1^\varepsilon)$, this proves that $\Id^L(M_1^\varepsilon)=Q$ and the above polynomials are a basis of $P_n ^L$ modulo $P_n ^L \cap \Id^L(M_1^\varepsilon)$. Clearly $c_n ^L(M_1 ^\varepsilon)=n+1$.

We now determine the decomposition of the $n$th differential cocharacter $\chi_n^L(M_1^\varepsilon)$ of this algebra. Suppose that $\chi_{n}^{L}(M_1^\varepsilon)=\sum_{\lambda\vdash n} m_{\lambda}\chi_{\lambda}$. We consider the tableau $T_{(n)}$ defined in Theorem \ref{Thm U^e} and let $f_{(n)}$ and $f_{(n)}^\varepsilon$ be the corresponding polynomials defined in \eqref{hwv of T_(n)}. It is clear that $f_{(n)}$ and $f_{(n)}^\varepsilon$ are not identities of $M_1^\varepsilon$. Moreover, they are linear independent modulo $\Id^L(M_1^\varepsilon)$. In fact, suppose that $\alpha f_{(n)} + \beta f_{(n)}^\varepsilon \equiv 0 (\bmod \; \Id^L(M_1^\varepsilon))$. By making the evaluation $x=e_{22}$ we get $\alpha=0$. Moreover, if we evaluate $x=e_{22}+e_{12}$, we obtain $\beta=0$. Thus it follows that $m_{(n)}\geq 2$. By Remark \ref{Rmk Codimensions and multiplicity} and Theorem \ref{thmM} we have $m_{(n-1,1)}\geq 1$. Thus, since $\deg \chi_{(n)}=1$ and $\deg \chi_{(n-1,1)}=n-1$, it follows that $\chi_n^L(M_1^\varepsilon)=2\chi_{(n)}+\chi_{(n-1,1)}$.

A similar proof holds for the algebra $M_2 ^\varepsilon$.
\end{proof}

Let $M_1 ^\delta$ and $M_2^\delta$ be the $L$-algebras $M_1$ and $M_2$ where $L$ acts on them as the $1$-dimensional Lie algebra spanned by the derivation $\delta$ defined in \eqref{Derivation of M_1} and \eqref{Derivation of M_2}, respectively.
We do not present the proof of the next theorem since it is very similar to the proof of the
previous theorem.

\begin{thm}
\begin{enumerate}
\item $Id^L(M_1^\delta)=\langle x[y,z],x^\delta y - y^\delta x, xy^{\delta}, x^{\delta^{2}}\rangle_{T_{L}}$ and $Id^L(M_2^\delta)=\langle [x,y]z, x y^\delta - y x^\delta, x^{\delta}y, x^{\delta^{2}}\rangle_{T_{L}}$.
\vspace{1mm}

\item  $c_n ^L(M_1 ^\delta)=c_n ^L(M_2^\delta)=n+1$.
\vspace{1mm}

\item $\chi_n^L(M_1^\delta)=\chi_n^L(M_2^\delta)=2\chi_{(n)}+\chi_{(n-1,1)}$.
\end{enumerate}
\end{thm}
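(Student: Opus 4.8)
The plan is to follow the proof of Theorem~\ref{thmM^e} essentially verbatim, handling $M_1^\delta$ and $M_2^\delta$ separately; I will spell out $M_1^\delta$, the case of $M_2^\delta$ being its mirror image under the anti-isomorphism $e_{22}\mapsto e_{11}$, $e_{12}\mapsto e_{12}$, which swaps left and right and hence also the relevant evaluation points. Put $Q=\langle x[y,z],\,x^\delta y-y^\delta x,\,xy^\delta,\,x^{\delta^2}\rangle_{T_L}$. First I would check $Q\subseteq\Id^L(M_1^\delta)$ directly: writing a generic element of $M_1$ as $\alpha e_{22}+\beta e_{12}$, one has $\delta(\alpha e_{22}+\beta e_{12})=\alpha e_{12}$, $\delta^2=0$, $M_1e_{12}=0$ and $e_{12}M_1=Fe_{12}$, and these make each of the four generators vanish identically on $M_1^\delta$ (the first being an ordinary identity of $M_1$ by Theorem~\ref{thmM}).

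The second step is to exhibit a spanning set of $P_n^L$ modulo $P_n^L\cap Q$ of cardinality $n+1$. Modulo $Q$: $x^{\delta^2}\equiv0$ forces each variable to carry $\delta$ at most once; $xy^\delta\equiv0$ forces $\delta$ to occur only on the leftmost variable, hence at most once in any monomial; substituting $x\mapsto x^\delta$ in the generator $x[y,z]$ gives $x^\delta[y,z]\in Q$, so the letters following a leading $x^\delta$ may be permuted freely; and $x^\delta y-y^\delta x\equiv0$ lets one transfer $\delta$ between the first two letters, so a leading $x_k^\delta$ can be turned into $x_1^\delta$. Hence every $\delta$-monomial is congruent mod $Q$ to $x_1^\delta x_2\cdots x_n$, while — using $x[y,z]\in Q$ — the $\delta$-free monomials reduce mod $Q$ to the $n$ monomials $x_jx_{i_1}\cdots x_{i_{n-1}}$ with $i_1<\cdots<i_{n-1}$ exactly as for $M_1$ in Theorem~\ref{thmM}. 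Therefore the $n+1$ polynomials
$$x_jx_{i_1}\cdots x_{i_{n-1}}\ \ (1\le j\le n),\qquad x_1^\delta x_2\cdots x_n$$
span $P_n^L$ modulo $P_n^L\cap Q$. Next I would check they are linearly independent modulo $\Id^L(M_1^\delta)$: for each $j$ the substitution $x_j=e_{12}$, $x_k=e_{22}$ $(k\ne j)$ annihilates every listed monomial except $x_jx_{i_1}\cdots x_{i_{n-1}}$, which returns $e_{12}$, so every coefficient of a $\delta$-free monomial vanishes; then the substitution $x_k=e_{22}$ for all $k$ sends $x_1^\delta x_2\cdots x_n$ to $e_{12}$, so the last coefficient vanishes too. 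As in Theorem~\ref{thmM^e}, since $P_n^L\cap Q\subseteq P_n^L\cap\Id^L(M_1^\delta)$, this at once gives $\Id^L(M_1^\delta)=Q$, shows the displayed polynomials are a basis of $P_n^L$ modulo identities, and yields $c_n^L(M_1^\delta)=n+1$.

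For the cocharacter, the relation $c_n^L(M_1^\delta)=n+1=2\deg\chi_{(n)}+\deg\chi_{(n-1,1)}$ reduces everything to the bounds $m_{(n)}^L\ge2$ and $m_{(n-1,1)}^L\ge1$, since equality in $c_n^L=\sum_{\lambda}m_\lambda^L\deg\chi_\lambda$ then forces $m_{(n)}^L=2$, $m_{(n-1,1)}^L=1$ and $m_\lambda^L=0$ otherwise. For the first bound, the polynomials $f_{(n)}=x^n$ and $f_{(n)}^\delta=x^\delta x^{n-1}$, obtained from the essential idempotents of the tableau $T_{(n)}$ defined in Theorem~\ref{Thm U^e} by identifying the row (cf.~\eqref{hwv of T_(n)}), are independent modulo $\Id^L(M_1^\delta)$, because the evaluation $x=e_{22}$ sends them to $e_{22}$ and $e_{12}$ respectively. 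For the second bound, Remark~\ref{Rmk Codimensions and multiplicity} together with $\chi_n(M_1)=\chi_{(n)}+\chi_{(n-1,1)}$ (Theorem~\ref{thmM}) gives $m_{(n-1,1)}^L\ge1$. Hence $\chi_n^L(M_1^\delta)=2\chi_{(n)}+\chi_{(n-1,1)}$, and the three assertions for $M_2^\delta$ follow by the symmetric argument, with $[x,y]z$ in place of $x[y,z]$, the basis $x_{i_1}\cdots x_{i_{n-1}}x_j$ together with $x_1x_2\cdots x_{n-1}x_n^\delta$, evaluations at $e_{11}$ and $e_{12}$, and $f_{(n)}^\delta=x^{n-1}x^\delta$.

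The computation is entirely routine, and the only point that deserves a moment's attention is the reduction collapsing every $\delta$-monomial to $x_1^\delta x_2\cdots x_n$; this rests on the standard fact that a $T_L$-ideal is closed under the differential substitutions $x_i\mapsto f_i$ with $f_i\in F\langle X|L\rangle$ (used here through $x\mapsto x^\delta$). If anything this step is cleaner than its counterpart for $M_1^\varepsilon$, since $x^\delta y-y^\delta x$ carries no $[x,y]$-correction term and so transferring $\delta$ between consecutive letters leaves no $\delta$-free remainder to be reprocessed.
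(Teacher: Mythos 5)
Your proof is correct and follows exactly the route the paper intends (it omits the proof, noting only that it parallels Theorem \ref{thmM^e}, which is precisely what you carry out): verify $Q\subseteq\Id^L(M_1^\delta)$, reduce to the $n+1$ spanning monomials, separate them by evaluations at $e_{22}$ and $e_{12}$, and pin down the cocharacter by the degree count $n+1=2\deg\chi_{(n)}+\deg\chi_{(n-1,1)}$. All the individual reductions check out (in particular $x^\delta[y,z]\in Q$ by substitution and the collapse of every $\delta$-monomial to $x_1^\delta x_2\cdots x_n$ via $x^\delta y-y^\delta x$), and your single evaluation $x=e_{22}$ giving the independent values $e_{22}$ and $e_{12}$ is a clean way to get $m_{(n)}\geq 2$.
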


Let now $L$ be a $2$-dimensional metabelian Lie algebra. Let denote by $M_1^D$ the $L$-algebra $M_1$ where $L$ acts on it as the Lie algebra $\Der(M_1)$ and $M_2^D$ the $L$-algebra $M_2$ where $L$ acts on it as the Lie algebra $\Der(M_2)$.  

\begin{rmk}
\begin{enumerate}
\item $x^\delta y - y^\delta x \in \langle  xy^{\varepsilon}, x^\varepsilon y- y^\varepsilon x -[x,y],  x^{ \varepsilon\delta}- x^{\delta}\rangle_{T_{L}}$.

\vspace{1mm}
\item $x y^\delta - y x^\delta  \in \langle  x^{\varepsilon}y, x y^\varepsilon - y x^\varepsilon - [x,y], x^{ \varepsilon\delta}- x^{\delta} \rangle_{T_{L}}$.
\end{enumerate}

\end{rmk}

\begin{proof}
First notice that $[x,y]^\delta \in \langle  xy^{\varepsilon}, [x,y]^\varepsilon -[x,y]\rangle_{T_{L}}$. Thus, since $[x,y]^\varepsilon \equiv [x,y] (\bmod \; \langle  xy^{\varepsilon}, x^\varepsilon y- y^\varepsilon x -[x,y]\rangle_{T_{L}})$, it follows that
$$[x,y]^\delta \in \langle  xy^{\varepsilon}, x^\varepsilon y- y^\varepsilon x -[x,y],  x^{ \varepsilon\delta}- x^{\delta}\rangle_{T_{L}}.$$
Moreover, since $xy^\delta \in \langle  xy^{\varepsilon},  x^{ \varepsilon\delta}- x^{\delta}\rangle_{T_{L}}$, we get
$$x^\delta y - y^\delta x \in \langle  xy^{\varepsilon}, x^\varepsilon y- y^\varepsilon x -[x,y],  x^{ \varepsilon\delta}- x^{\delta}\rangle_{T_{L}}.$$
A similar proof holds for the other statement.
\end{proof}

By following closely the proof of the Theorem \ref{thmM^e}, taking into account the due changes, we get the following.

\begin{thm}
\begin{enumerate}
\item $Id^L(M_1^D)=\langle  xy^{\varepsilon}, x^\varepsilon y- y^\varepsilon x -[x,y], x^{\varepsilon^{2}}-x^{\varepsilon}, x^{\delta\varepsilon}, x^{ \varepsilon\delta}- x^{\delta}\rangle_{T_{L}}$ and $Id^L(M_2^D)=\langle  x^{\varepsilon}y, x y^\varepsilon - y x^\varepsilon - [x,y], x^{\varepsilon^{2}}-x^{\varepsilon}, x^{\delta\varepsilon}, x^{ \varepsilon\delta}- x^{\delta} \rangle_{T_{L}}$.

\vspace{1mm}
\item   $c_n ^L(M_1 ^D)=c_n ^L(M_2^D)=n+2$.
\vspace{1mm}

\item $\chi_n^L(M_1^D)=\chi_n^L(M_2^D)=3\chi_{(n)}+\chi_{(n-1,1)}$.
\end{enumerate}
\end{thm}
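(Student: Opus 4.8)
The plan is to follow the blueprint of Theorem~\ref{thmM^e} almost verbatim, so I will only indicate the modifications forced by the extra derivation $\delta$. Write $Q$ for the $T_L$-ideal $\langle xy^{\varepsilon},\ x^\varepsilon y-y^\varepsilon x-[x,y],\ x^{\varepsilon^2}-x^\varepsilon,\ x^{\delta\varepsilon},\ x^{\varepsilon\delta}-x^\delta\rangle_{T_L}$ for $M_1^D$ (and the mirror ideal for $M_2^D$). First I would check the inclusion $Q\subseteq\Id^L(M_1^D)$ by direct evaluation: on $M_1$ one has $\varepsilon(e_{12})=e_{12}$, $\varepsilon(e_{22})=0$, $\delta(e_{22})=e_{12}$, $\delta(e_{12})=0$, so $\varepsilon\delta$ acts as $\delta$ on $e_{22}$ and kills $e_{12}$, while $\delta\varepsilon$ kills everything; together with the relations already verified for $M_1^\varepsilon$ this is routine.

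Next comes the spanning step. Using $x^{\varepsilon\delta}\equiv x^\delta$, $x^{\delta\varepsilon}\equiv 0$, $x^{\varepsilon^2}\equiv x^\varepsilon$ and $x^{\delta^2}\equiv 0$ (the last follows from $xy^\varepsilon\in Q$ via the preceding Remark, exactly as in the proof that $[x,y]^\delta\in Q$), every monomial in $P_n^L$ is, modulo $Q$, a monomial in which each variable carries one of the four ``reduced'' tags $1,\varepsilon,\delta$ — and at most one variable carries a nontrivial tag, because $x^\varepsilon y^\varepsilon\equiv x^\varepsilon y^\delta\equiv x^\delta y^\varepsilon\equiv x^\delta y^\delta\equiv 0$ modulo $Q$ (these follow from $xy^\varepsilon\in Q$ and $x^{\varepsilon\delta}-x^\delta$, $x^{\delta\varepsilon}$ after moving the $\varepsilon$/$\delta$ around with the commutator relation). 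Moreover $x[y,z]\in Q$ forces the variable carrying the nontrivial tag, if any, into the first position after collecting commutators. Hence $P_n^L$ is spanned modulo $Q$ by
\begin{equation*}
x_j x_{i_1}\cdots x_{i_{n-1}},\qquad x_1^\varepsilon x_2\cdots x_n,\qquad x_1^\delta x_2\cdots x_n,\qquad i_1<\cdots<i_{n-1},
\end{equation*}
i.e.\ by $n+2$ polynomials: the $n$ ``ordinary'' ones of Theorem~\ref{thmM}, plus the two tagged ones.

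Then I would prove these $n+2$ polynomials are linearly independent modulo $\Id^L(M_1^D)$, which simultaneously gives $\Id^L(M_1^D)=Q$, $c_n^L(M_1^D)=n+2$, and the claimed cocharacter. Starting from a relation $\sum_j\alpha_j x_jx_{i_1}\cdots x_{i_{n-1}}+\beta x_1^\varepsilon x_2\cdots x_n+\gamma x_1^\delta x_2\cdots x_n\equiv 0$, the evaluations used in Theorem~\ref{thmM^e} already kill all $\alpha_j$ and the coefficient $\beta$ of the $\varepsilon$-term (substitute $e_{12}$ in one slot and $e_{22}$ elsewhere, then all $e_{22}$, then $e_{12}$ in the first slot and $e_{22}$ elsewhere — noting $\delta(e_{22})=e_{12}$ contributes nothing once $\beta$ is isolated because the $\delta$-term is then evaluated on all $e_{22}$ giving $e_{12}$, so one must order the substitutions so that $\gamma$ is isolated first). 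Concretely: evaluating $x_1=e_{22}$, $x_k=e_{22}$ for all $k$ makes every ordinary term vanish (product of $e_{22}$'s is $e_{22}$, but $x_1x_2\cdots x_n=e_{22}\ne 0$ — so in fact this isolates $\alpha_1$), the $\varepsilon$-term gives $\varepsilon(e_{22})=0$, and the $\delta$-term gives $\delta(e_{22})e_{22}\cdots e_{22}=e_{12}$, hence $\gamma=0$; then proceed as before. For the cocharacter, the two highest-weight vectors $f_{(n)}=x^n$ and $f_{(n)}^\varepsilon=x^\varepsilon x^{n-1}$ are joined by a third, $f_{(n)}^\delta=x^\delta x^{n-1}$, and one checks $m_{(n)}\ge 3$ by the evaluations $x=e_{22}$, $x=e_{22}+e_{12}$ separating $f_{(n)},f_{(n)}^\varepsilon$ and a further evaluation (e.g.\ computing $f_{(n)}^\delta$ on $x=e_{22}$, which gives $\delta(e_{22})e_{22}^{n-1}=e_{12}\ne 0$ whereas $f_{(n)}$ there is $e_{22}$ and $f_{(n)}^\varepsilon$ there is $0$) separating $f_{(n)}^\delta$; together with $m_{(n-1,1)}\ge 1$ from Remark~\ref{Rmk Codimensions and multiplicity} and Theorem~\ref{thmM}, and the degree count $3\cdot 1+(n-1)=n+2=c_n^L(M_1^D)$, this forces $\chi_n^L(M_1^D)=3\chi_{(n)}+\chi_{(n-1,1)}$. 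The mirror argument handles $M_2^D$.

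The main obstacle is the spanning step: one has to be careful that $\delta$ interacts correctly with the commutator relation (since $L$ is only metabelian, $[\varepsilon,\delta]=\delta$), and in particular that the identities $x^{\delta}y^{\delta}\equiv 0$ and $x^{\varepsilon}y^{\delta}\equiv 0$ modulo $Q$ really do follow from the five listed generators — this is where the preceding Remark (giving $x^\delta y-y^\delta x$, hence $[x,y]^\delta$, inside the smaller ideal) is essential, and it is the one place where the proof genuinely differs from that of Theorem~\ref{thmM^e} rather than being a cosmetic relabelling.
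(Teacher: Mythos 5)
Your proposal is correct and follows essentially the same route as the paper, which itself only indicates that one should repeat the proof of Theorem \ref{thmM^e} with the obvious modifications: the same spanning set enlarged by $x_1^{\delta}x_2\cdots x_n$ (resp.\ its highest-weight version $x^{\delta}x^{n-1}$), the same evaluations (with the all-$e_{22}$ substitution now also isolating the $\delta$-coefficient via $\delta(e_{22})=e_{12}$), and the degree count $3+(n-1)=n+2$. One small correction: the reduction $x^{\delta^2}\equiv 0 \pmod Q$ does not come from $xy^{\varepsilon}$ via the Remark, but directly from the generator $x^{\delta\varepsilon}$ together with the relation $\delta^2=\delta\varepsilon\delta-\delta^2\varepsilon$ in $U(L)$ (a consequence of $[\varepsilon,\delta]=\delta$), both of whose terms are substitution instances or $\delta$-images of $x^{\delta\varepsilon}$.
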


\section{The algebra of $2\times 2$ upper triangular matrices and its differential identities}

In this section we study the growth of differential identities of the algebra $UT_2$ of $2 \times 2$ upper triangular matrices over $F$.

%

Let $L$ be any Lie algebra over $F$ and let denote by $UT_2$ the $L$-algebra $UT_2$ where $L$ acts trivially on it. Since $x^\gamma \equiv 0$, for all $\gamma \in L$, is a differential identity of $UT_2$, we are dealing with ordinary identities. Thus by \cite{Malcev1971}, \cite{Kemer1979} and by the proof of Lemma $3.5$ in \cite{BenantiGiambrunoSviridova2004}, we have the following results.

\begin{thm}
\label{ThmIdCnOrdinaryUT2}
\begin{enumerate}
\item $\Id^L(UT_{2})=\langle [x_{1},x_{2}][x_{3},x_{4}]\rangle_{T_L}$.

\vspace{1mm}
\item $c^L _{n}(UT_{2})=2^{n-1}(n-2)+2.$

\vspace{1mm}
		
\item If $\chi_{n}^{L}(UT_{2})=\sum_{\lambda\vdash n} m_{\lambda} \chi_{\lambda}$ is the $n$th differential cocharacter of $UT_{2}$, then
$$m_{\lambda}=\begin{cases}
		1, & \mbox{ if } \lambda=(n) \\
		q+1, & \mbox{ if } \lambda=(p+q,p) \mbox{ or } \lambda=(p+q,p,1) \\
		0 & \mbox{ in all other cases}
		\end{cases}.$$
\end{enumerate}
\end{thm}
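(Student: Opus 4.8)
The plan is to observe that Theorem \ref{ThmIdCnOrdinaryUT2} is a purely ordinary (non-differential) statement in disguise: since $L$ acts trivially on $UT_2$, we have $x^\gamma \equiv 0$ for all $\gamma \in L$, so $\Id^L(UT_2)$ is generated as a $T_L$-ideal by the differential identities $x^\gamma \equiv 0$ ($\gamma \in L$, $\gamma \neq 1$) together with the ordinary $T$-ideal $\Id(UT_2)$ viewed inside $F\langle X|L\rangle$. Concretely, modulo the relations $x^\gamma \equiv 0$, every multilinear differential polynomial of degree $n$ reduces to an ordinary multilinear polynomial in $x_1,\dots,x_n$, so $P_n^L(UT_2) \cong P_n(UT_2)$ as $S_n$-modules, and hence $c_n^L(UT_2) = c_n(UT_2)$ and $\chi_n^L(UT_2) = \chi_n(UT_2)$. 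This reduces all three claims to the corresponding classical facts about $UT_2$.

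For part (1), I would recall that $UT_2$ satisfies $[x_1,x_2][x_3,x_4] \equiv 0$ and that by Mal'cev's theorem (\cite{Malcev1971}) this single polynomial generates the ordinary $T$-ideal $\Id(UT_2)$; passing to the $T_L$-envelope and adjoining the trivial-action relations gives the stated generators, but under our identification convention (the excerpt says we omit the identities $x^\gamma \equiv 0$ when $L$ acts as a subalgebra, here the trivial one) the displayed answer $\langle [x_1,x_2][x_3,x_4]\rangle_{T_L}$ is exactly right. For part (2), the codimension formula $c_n(UT_2) = 2^{n-1}(n-2)+2$ is the classical computation of Mal'cev; alternatively it follows from summing the dimensions $\sum_\lambda m_\lambda \deg\chi_\lambda$ over the cocharacter in part (3), which I would use as a consistency check. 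For part (3), I would invoke Kemer's description (\cite{Kemer1979}) of the cocharacter of $UT_2$, refined in the proof of Lemma 3.5 of \cite{BenantiGiambrunoSviridova2004}: the relevant $\lambda$ are the hooks-over-rectangles of the shapes $(n)$, $(p+q,p)$ and $(p+q,p,1)$, with the stated multiplicities $1$, $q+1$, $q+1$ and $0$ otherwise.

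If one wants a self-contained argument rather than citation, the multiplicities can be pinned down by the highest-weight-vector method: for each two-rowed shape $(p+q,p)$ one writes down $q+1$ explicit multilinear products of long commutators evaluated on generic upper-triangular matrices (e.g.\ commutators in the variables filling the first column together with suitable "tails"), shows they are linearly independent modulo $\Id(UT_2)$ using evaluations $x_i \mapsto e_{11}, e_{22}, e_{12}$, and conversely bounds $m_\lambda$ from above by exploiting the identity $[x_1,x_2][x_3,x_4]\equiv 0$, which forces at most one "genuine" commutator factor and collapses the possible tableaux; the three-rowed case $(p+q,p,1)$ is handled identically after multiplying on the right by one more commutator-type factor, and all $\lambda$ with more than three rows or with a third row longer than $1$ vanish because any such essential idempotent produces a product of two commutators.

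The main obstacle is really bookkeeping rather than conceptual: establishing the exact multiplicity $q+1$ (neither more nor less) for the shapes $(p+q,p)$ and $(p+q,p,1)$ requires the careful combinatorial analysis of which standard tableaux yield independent highest weight vectors modulo $[x_1,x_2][x_3,x_4]$, and this is precisely what is done in \cite{Kemer1979} and in the proof of Lemma 3.5 of \cite{BenantiGiambrunoSviridova2004}; since the excerpt explicitly permits citing those sources, I would simply transport their conclusion through the identification $P_n^L(UT_2)\cong P_n(UT_2)$ and be done.
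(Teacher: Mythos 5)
Your proposal matches the paper exactly: the paper also observes that a trivial $L$-action makes $x^\gamma\equiv 0$ a differential identity, so $P_n^L(UT_2)$ reduces to the ordinary $P_n(UT_2)$, and then simply cites Mal'cev, Kemer, and the proof of Lemma 3.5 of Benanti--Giambruno--Sviridova for the three statements. Your additional sketch of a self-contained highest-weight-vector argument is a correct outline but goes beyond what the paper does.
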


\begin{thm}
\label{Thm:almost polynomial growth $UT_2$}
$\var^L(UT_2)$ has almost polynomial growth.
\end{thm}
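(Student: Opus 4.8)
The plan is to show that $\var^L(UT_2)$ has exponential growth and that every proper $L$-subvariety has polynomial growth. The first part is immediate from Theorem \ref{ThmIdCnOrdinaryUT2}(2): since $c_n^L(UT_2)=2^{n-1}(n-2)+2$ grows exponentially, the $L$-variety $\var^L(UT_2)$ is not polynomially bounded. The substance of the argument is therefore the second part, and since here $L$ acts trivially, $\Id^L(UT_2)$ is identified with the ordinary $T$-ideal, so a proper $L$-subvariety $\mathcal{W}\subsetneq\var^L(UT_2)$ corresponds to a $T_L$-ideal $\Id^L(B)$ properly containing $\langle[x_1,x_2][x_3,x_4]\rangle_{T_L}$ for some $L$-algebra $B$.

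First I would exploit the fact that a proper inclusion of $T_L$-ideals forces $B$ to satisfy some genuinely new differential identity $f\notin\Id^L(UT_2)$. The key reduction is to pass from $f$ to a multilinear consequence and then, using the action of $S_n$ together with the decomposition of $\chi_n^L(UT_2)$ given in Theorem \ref{ThmIdCnOrdinaryUT2}(3), to locate which irreducible $S_n$-submodules of $P_n^L(UT_2)$ must lie inside $\Id^L(B)$. Because the cocharacter of $UT_2$ is supported only on the hooks-plus-near-rectangles $(p+q,p)$ and $(p+q,p,1)$ with multiplicities $q+1$, killing even one irreducible from a ``staircase'' of these partitions forces, via the standard closure properties of $T_L$-ideals (capturing: if a highest weight vector for $\lambda$ is an identity then so are the ones obtained by adding boxes in a controlled way), the multiplicities $m_\lambda^L(B)$ to be bounded by an absolute constant for all $\lambda$ with two or three parts, and to vanish for all $\lambda$ with second part exceeding some fixed bound. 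I would phrase this as: there exists $k$ such that $m_\lambda^L(B)=0$ whenever $\lambda_2\ge k$, and $m_\lambda^L(B)\le c$ for all $\lambda$.

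Granting that, the codimension estimate follows by the usual counting: $c_n^L(B)=\sum_{\lambda\vdash n}m_\lambda^L(B)\deg\chi_\lambda$, and since only partitions with at most, say, $k$ columns removed from the first row (equivalently $\lambda_2+\lambda_3\le$ const) survive, each such $\deg\chi_\lambda$ is polynomially bounded in $n$ (a partition with $\lambda_1\ge n-k$ has $\deg\chi_\lambda\le\binom{n}{k}\cdot(\text{const})$), and the number of surviving $\lambda$ is bounded, so $c_n^L(B)$ is polynomially bounded. Hence $\mathcal{W}$ has polynomial growth, which is what we want.

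The main obstacle is the passage from ``$B$ satisfies one new multilinear differential identity'' to the uniform statement ``$m_\lambda^L(B)$ is eventually zero and globally bounded.'' In the ordinary setting this is exactly the content of Kemer's analysis of $\var(UT_2)$ via the structure of its $T$-ideal, and here the presence of the trivial $U(L)$-action means $P_n^L(UT_2)$ is, as an $S_n$-module, just $P_n(UT_2)$ (all variables decorated by $1\in U(L)$), so one should be able to invoke the ordinary result of Kemer (or the codimension/cocharacter analysis in \cite{GiambrunoRizzo2018}) almost verbatim; the only care needed is to check that adding the formal generators $x_i^{h}$ for $h\neq1$ does not create new multilinear spaces outside $P_n$ modulo $\Id^L(B)$, which holds because $x_i^h$ for $h\in L$ is already an identity. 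I would therefore present the proof as a short reduction to the ordinary almost-polynomial-growth theorem for $UT_2$, citing \cite{Kemer1979} and \cite{GiambrunoRizzo2018}, with the observation above as the bridge.
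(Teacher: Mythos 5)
Your proposal is correct and follows essentially the same route as the paper, which simply observes that the trivial $L$-action makes $x^\gamma\equiv 0$ a differential identity for all $\gamma\in L$, so that every $L$-algebra in $\var^L(UT_2)$ reduces to an ordinary algebra with $c_n^L(B)=c_n(B)$, and then invokes Kemer's theorem that $UT_2$ generates an ordinary variety of almost polynomial growth. Your closing ``bridge'' observation is exactly the point the paper relies on; the cocharacter-multiplicity sketch in your middle paragraphs is just an expanded account of the standard proof of the ordinary result being cited.
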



\

Let now $\varepsilon$ be the inner derivation of $UT_2$ induced by $2^{-1}(e_{11}-e_{22})$, i.e.,
 $$\varepsilon(a)=2^{-1}[e_{11}-e_{22},a],~ \mbox{for~ all~} a\in UT_2,
$$
where $e_{ij}$'s are the usual matrix units. We shall denote by $UT_2^\varepsilon$ the $L$-algebra $UT_2$ where $L$ acts on it as the $1$-dimensional Lie algebra spanned by $\varepsilon$.
In \cite{GiambrunoRizzo2018} the authors proved the following.

\begin{thm}\label{UT2e}{\cite[Theorems 5 and 12]{GiambrunoRizzo2018}}
	\begin{enumerate}
		\item $\Id^{L}(UT_2^{\varepsilon})=\langle [x,y]^{\varepsilon}-[x,y], x^{\varepsilon}y^{\varepsilon}, x^{\varepsilon^{2}}-x^{\varepsilon}\rangle_{T_{L}}$.
		
		\vspace{1mm}
		\item $c_n^{L}(UT_2^{\varepsilon})=2^{n-1}n+1$.
		
		\vspace{1mm}
		\item If $\chi_{n}^{L}(UT_{2}^\varepsilon)=\sum_{\lambda\vdash n} m_{\lambda}^\varepsilon \chi_{\lambda}$ is the $n$th differential cocharacter of $UT_{2}^{\varepsilon}$, then:
		$$m_{\lambda}^{\varepsilon}=\begin{cases}
		n+1, & \mbox{ if } \lambda=(n) \\
		2(q+1), & \mbox{ if } \lambda=(p+q,p) \\
		q+1, & \mbox{ if } \lambda=(p+q,p,1) \\
		0 & \mbox{ in all other cases}
		\end{cases}.$$
	\end{enumerate}
	
\end{thm}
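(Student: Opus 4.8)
The plan is to establish (1), (2) and (3) simultaneously by producing an explicit monomial basis of $P_n^L$ modulo $P_n^L\cap\Id^L(UT_2^\varepsilon)$, in the spirit of the proof of Theorem~\ref{Thm U^e}. Write $UT_2=D\oplus J$ with $D=Fe_{11}\oplus Fe_{22}$ and $J=Fe_{12}$; the first thing to record is that $\varepsilon$ is precisely the projection of $UT_2$ onto the square-zero ideal $J$ along $D$. Consequently $\varepsilon^2=\varepsilon$ on $UT_2$, every $x^\varepsilon$ evaluates into $J$, and $J\,UT_2\,J=0$. From these three facts the inclusion $Q:=\langle[x,y]^\varepsilon-[x,y],\ x^\varepsilon y^\varepsilon,\ x^{\varepsilon^2}-x^\varepsilon\rangle_{T_L}\subseteq\Id^L(UT_2^\varepsilon)$ is immediate: $x^{\varepsilon^2}-x^\varepsilon$ vanishes by idempotency of $\varepsilon$, $x^\varepsilon y^\varepsilon$ by $J^2=0$, and $[x,y]^\varepsilon-[x,y]$ because $[UT_2,UT_2]\subseteq J$, on which $\varepsilon$ restricts to the identity.

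The core of the argument is the reverse inclusion together with the dimension count. From $x^{\varepsilon^2}\equiv x^\varepsilon\pmod Q$ every generator $x^h$ reduces to an $F$-combination of $x$ and $x^\varepsilon$. Substituting $y\mapsto zw$ into $x^\varepsilon y^\varepsilon$ and subtracting $x^\varepsilon z^\varepsilon w\in Q$ yields $x^\varepsilon z\,w^\varepsilon\in Q$; iterating, any differential monomial carrying two or more $\varepsilon$'s lies in $Q$, so modulo $Q$ at most one variable in a monomial is starred. In particular every term of $[x_1,x_2]^\varepsilon[x_3,x_4]^\varepsilon$ carries two $\varepsilon$'s, whence $[x_1,x_2][x_3,x_4]\equiv[x_1,x_2]^\varepsilon[x_3,x_4]^\varepsilon\equiv0\pmod Q$, and the ordinary $T$-ideal of $UT_2$ from Theorem~\ref{ThmIdCnOrdinaryUT2} is contained in $Q$. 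Finally, using $[a,b]\equiv[a,b]^\varepsilon\pmod Q$, transposing two consecutive plain letters lying on the same side of a starred letter produces only correction terms carrying two $\varepsilon$'s, hence is free modulo $Q$; thus a one-starred monomial reduces to $x_{i_1}\cdots x_{i_r}\,x_k^\varepsilon\,x_{j_1}\cdots x_{j_s}$ with the indices increasing on each side, and a further, careful use of the relations $x_\ell[x_i,x_j]^\varepsilon\equiv x_\ell[x_i,x_j]$ and $[x_i,x_j]^\varepsilon x_\ell\equiv[x_i,x_j]x_\ell$ (together with the Jacobi identity, which forces a dependence among them) trims these against ordinary monomials. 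I expect the delicate point to be exactly this last step: extracting from the three defining relations an explicit spanning set $B_n$ of $P_n^L$ modulo $P_n^L\cap Q$ of cardinality precisely $2^{n-1}n+1$.

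Once $B_n$ is in hand, I would prove it linearly independent modulo $\Id^L(UT_2^\varepsilon)$ by matrix-unit substitutions: for the ordinary part, the independence already known for $UT_2$; for a one-starred monomial $x_{i_1}\cdots x_{i_r}x_k^\varepsilon x_{j_1}\cdots x_{j_s}$, the evaluation $x_k\mapsto e_{12}$, the variables preceding the star at suitable diagonal units and those following it at $e_{22}$, so that only the intended monomial survives in a given linear combination. Since $P_n^L\cap Q\subseteq P_n^L\cap\Id^L(UT_2^\varepsilon)$, this forces $Q=\Id^L(UT_2^\varepsilon)$ and $c_n^L(UT_2^\varepsilon)=|B_n|=2^{n-1}n+1$, which is (1) and (2).

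For (3), since $[x_1,x_2][x_3,x_4]\in\Id^L(UT_2^\varepsilon)$ the support of $\chi_n^L(UT_2^\varepsilon)$ is contained in that of $\chi_n(UT_2)$, so only the shapes $(n)$, $(p+q,p)$ and $(p+q,p,1)$ can occur. For each such $\lambda$ I would exhibit highest weight vectors by applying the essential idempotent of a standard $\lambda$-tableau to differential monomials whose row-repeated variables carry at most one $\varepsilon$: for $\lambda=(n)$ the candidates $x^n$ and $x^{j}x^\varepsilon x^{\,n-1-j}$, $0\le j\le n-1$, evaluate at $x=\alpha e_{11}+\beta e_{22}+\gamma e_{12}$ to $\alpha^{j}\beta^{\,n-1-j}\gamma\,e_{12}$ (plus a diagonal term for $x^n$) and are therefore $n+1$ linearly independent modulo $\Id^L(UT_2^\varepsilon)$, so $m_{(n)}^\varepsilon\ge n+1$; analogous constructions give $m_{(p+q,p)}^\varepsilon\ge 2(q+1)$, while $m_{(p+q,p,1)}^\varepsilon\ge q+1$ already follows from Remark~\ref{Rmk Codimensions and multiplicity} and Theorem~\ref{ThmIdCnOrdinaryUT2}. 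Because $\sum_{\lambda\vdash n}m_\lambda^\varepsilon\deg\chi_\lambda=c_n^L(UT_2^\varepsilon)=2^{n-1}n+1$ and the stated multiplicities already exhaust this total, all the lower bounds are equalities, giving (3). As in (2), the real work is combinatorial: arranging the reduction and the independence evaluations so that the counts come out exactly to $2^{n-1}n+1$ and to the stated multiplicities.
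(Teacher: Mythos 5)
First, note that the paper does not actually prove this statement: Theorem~\ref{UT2e} is quoted from \cite[Theorems 5 and 12]{GiambrunoRizzo2018}, so there is no internal proof to compare against. Your strategy, however, is exactly the one the paper uses for its own computations ($C^\varepsilon$, $M_i^\varepsilon$, $UT_2^\delta$): verify the inclusion $Q\subseteq\Id^L(UT_2^\varepsilon)$, reduce $P_n^L$ modulo $Q$ to an explicit spanning set, prove independence by matrix-unit evaluations, and then squeeze the cocharacter between lower bounds on multiplicities and the codimension. Your verification of $Q\subseteq\Id^L(UT_2^\varepsilon)$, the observation that any monomial carrying two $\varepsilon$'s lies in $Q$, the deduction $[x_1,x_2][x_3,x_4]\in Q$, and the highest-weight-vector computation giving $m_{(n)}^\varepsilon\ge n+1$ are all correct.

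The genuine gap is the one you yourself flag: you never identify the spanning set $B_n$, and parts (1) and (2) rest entirely on it. Moreover, the direction in which you propose to finish (``trimming'' the one-starred monomials against the ordinary commutator basis) is the wrong way around and would leave you guessing which starred monomials to discard. The clean resolution is the opposite reduction: take
$$B_n=\{x_1\cdots x_n\}\cup\{x_{i_1}\cdots x_{i_m}x_k^{\varepsilon}x_{j_1}\cdots x_{j_{n-m-1}} \mid i_1<\dots<i_m,\ j_1<\dots<j_{n-m-1}\},$$
which has exactly $n2^{n-1}+1$ elements ($n$ choices of $k$ times $2^{n-1}$ left/right splits of the remaining variables, plus one). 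This spans because (i) monomials with no $\varepsilon$ reduce, modulo $\langle[x_1,x_2][x_3,x_4]\rangle_{T_L}\subseteq Q$, to $x_1\cdots x_n$ and products $x_{i_1}\cdots x_{i_m}[x_k,x_{j_1},\dots]$, and each such commutator term is \emph{absorbed into the one-starred monomials} via $[u,y]\equiv[u,y]^{\varepsilon}\ (\bmod\ Q)$, whose expansion is a sum of one-starred monomials; and (ii) in a one-starred monomial the plain letters on each side of $x_k^{\varepsilon}$ may be sorted freely, since each transposition produces a commutator that picks up a second $\varepsilon$ after applying $[x,y]\equiv[x,y]^{\varepsilon}$ and hence dies in $Q$. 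Independence then follows from the evaluations you propose ($x_k=e_{12}$, $x_i=e_{11}$ on the left, $x_j=e_{22}$ on the right, and $x_i=e_{11}+e_{22}$ throughout for the coefficient of $x_1\cdots x_n$), exactly as in the paper's proof of Lemma~\ref{Lemmma generatori UT^d}. With $c_n^L(UT_2^\varepsilon)=n2^{n-1}+1$ in hand, your counting argument for (3) closes correctly, although the lower bound $m_{(p+q,p)}^{\varepsilon}\ge 2(q+1)$ should be written out with explicit highest weight vectors (as in Lemma~\ref{LemMolt2Righe delta}) rather than asserted as ``analogous.''
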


\begin{thm}{\cite[Theorem 15]{GiambrunoRizzo2018}}
\label{Thm:almost polynomial growth $UT_2^epsilon$}
$\var^L(UT_2^\varepsilon)$ has almost polynomial growth.
\end{thm}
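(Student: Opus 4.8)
The statement to prove is that $\var^L(UT_2^\varepsilon)$ has almost polynomial growth. This is Theorem 15 from the referenced paper, and the excerpt already gives us the full set of tools we need: the explicit generators of $\Id^L(UT_2^\varepsilon)$, the codimension formula $c_n^L(UT_2^\varepsilon) = 2^{n-1}n + 1$, and the cocharacter decomposition.

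\medskip

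The plan is to proceed in two parts, following the standard template for almost polynomial growth results. First, the codimension sequence $c_n^L(UT_2^\varepsilon) = 2^{n-1}n+1$ grows exponentially, so $\var^L(UT_2^\varepsilon)$ does not have polynomial growth. Second, and this is the substantive part, I must show that every \emph{proper} $L$-subvariety $\mathcal{U} \subsetneq \var^L(UT_2^\varepsilon)$ has polynomial growth. Let $\mathcal{U} = \var^L(B)$ for some $L$-algebra $B$ with $\Id^L(UT_2^\varepsilon) \subsetneq \Id^L(B)$. So there is a differential polynomial $f$ that vanishes on $B$ but not on $UT_2^\varepsilon$. The strategy is to show that the presence of \emph{any} such extra identity forces the multiplicities $m_\lambda(\mathcal{U})$ in the cocharacter of $\mathcal{U}$ to be bounded and supported on partitions $\lambda = (\lambda_1, \lambda_2, \dots)$ with $\lambda_2 + \lambda_3 + \cdots$ bounded (i.e. in a "hook-plus-bounded-second-row" region), which by the standard hook/codimension estimates (the dimension of the $S_n$-irreducible associated to such $\lambda$ is polynomially bounded in $n$, and the number of such $\lambda$ is polynomially bounded) yields $c_n^L(\mathcal{U}) \le p(n)$ for some polynomial $p$.

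\medskip

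To carry this out, I would first reduce $f$ modulo $\Id^L(UT_2^\varepsilon)$ using the generators $[x,y]^\varepsilon - [x,y]$, $x^\varepsilon y^\varepsilon$, and $x^{\varepsilon^2} - x^\varepsilon$ to bring it to a normal form: every variable appears either "bare" or with a single $\varepsilon$, at most one $\varepsilon$-variable survives in each monomial (since $x^\varepsilon y^\varepsilon \equiv 0$), and commutators can be assumed left-normed with the $\varepsilon$ absorbed. Then by multilinearization and by acting with $S_n$, I may assume $f$ is multilinear and is (essentially) a highest-weight vector associated to some partition $\lambda$. The key structural step is: if $f$ is not an identity of $UT_2^\varepsilon$ but lies in $P_n^L$, then examining which $\lambda$ can occur — from Theorem \ref{UT2e}(3) the nonzero multiplicities occur only for $\lambda = (n)$, $\lambda = (p+q,p)$, and $\lambda = (p+q,p,1)$, i.e. always at most two rows beyond the first plus possibly a single box — I argue that a genuinely new identity $f \in \Id^L(B) \setminus \Id^L(UT_2^\varepsilon)$ must kill all highest-weight vectors for $\lambda$ with $\lambda_2$ sufficiently large, hence $m_\lambda(\mathcal{U}) = 0$ for $\lambda_2 \gg 0$ and $m_\lambda(\mathcal{U})$ uniformly bounded otherwise. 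The cleanest route is probably to pass to the associated graded / use the explicit basis of $P_n^L / (P_n^L \cap \Id^L(UT_2^\varepsilon))$ coming from the cocharacter, show $f$ imposes a nontrivial linear relation among the basis elements for some fixed "shape", and propagate that relation to all larger $n$ via the $T_L$-ideal structure (multiplying by extra bare variables and commutators), thereby collapsing the second row.

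\medskip

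The main obstacle I anticipate is the propagation step — showing that a single extra identity in a fixed number of variables forces the multiplicity function to collapse in the hook direction for \emph{all} $n$. This requires understanding how the $T_L$-ideal generated by $f$ together with $\Id^L(UT_2^\varepsilon)$ behaves: one must show that consequences of $f$ (substitutions, products with $x_i$ and with commutators, action of $\varepsilon$) are enough to annihilate highest-weight vectors of the "big" partitions. I expect this to hinge on a careful case analysis of the normal form of $f$: either $f$ involves a product of two commutators region (forcing us inside $\var^L(UT_2) $-type behavior or lower), or $f$ has a bounded number of commutators and the argument reduces to the ordinary $UT_2$ almost-polynomial-growth argument of Kemer combined with bookkeeping for the single $\varepsilon$-decoration. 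In fact the slickest presentation may simply invoke Theorem \ref{Thm:almost polynomial growth $UT_2$}: since $UT_2 \subseteq \var^L(UT_2^\varepsilon)$ as $L$-algebras and the "difference" is controlled by the three explicit relations, any proper $L$-subvariety either already fails to contain $UT_2$ (hence sits inside a proper subvariety of $\var^L(UT_2)$, which has polynomial growth by Theorem \ref{Thm:almost polynomial growth $UT_2$}) or contains $UT_2$ but then the extra identity must involve $\varepsilon$ nontrivially, and a direct computation with the normal form shows the codimensions drop to polynomial. Making this dichotomy precise and handling the second branch is where the real work lies.
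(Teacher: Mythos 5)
This theorem is not proved in the paper at all: it is imported verbatim as \cite[Theorem 15]{GiambrunoRizzo2018}, so there is no internal proof to compare against. Judged on its own terms, your proposal correctly identifies the two-step template (exponential growth of $c_n^L(UT_2^\varepsilon)=2^{n-1}n+1$, then polynomial growth of every proper $L$-subvariety) and correctly locates where the difficulty sits, but it does not actually carry out the second step. The claim that any extra identity $f\in \Id^L(B)\setminus \Id^L(UT_2^\varepsilon)$ forces the highest weight vectors attached to $(p+q,p)$ and $(p+q,p,1)$ with $p$ large to become identities of $B$ is asserted as a plan, not proved; it requires working with the explicit highest weight vectors realizing the multiplicities in Theorem \ref{UT2e}(3) and showing that the $T_L$-ideal generated by $\Id^L(UT_2^\varepsilon)$ together with any one of them absorbs all the others for larger partitions. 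You flag this yourself as ``where the real work lies,'' which is an accurate self-diagnosis: that is essentially the entire content of the cited Theorem 15.

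The alternative ``slickest presentation'' you sketch rests on a false premise. The algebra $UT_2$ with trivial $L$-action does \emph{not} lie in $\var^L(UT_2^\varepsilon)$: under the trivial action the generator $[x,y]^\varepsilon-[x,y]$ of $\Id^L(UT_2^\varepsilon)$ evaluates to $-[x,y]$, which is not an identity of $UT_2$. (This is exactly the point that distinguishes $UT_2^\varepsilon$ from $UT_2^\delta$: for the latter, every generator involving $\delta$ vanishes under the trivial action and $[x,y][z,w]$ survives as an ordinary identity, so $UT_2\in\var^L(UT_2^\delta)$ and almost polynomial growth fails there.) Consequently the dichotomy ``$\mathcal{U}$ contains $UT_2$ or not'' never arises, and even its first branch is logically off: a subvariety of $\var^L(UT_2^\varepsilon)$ not containing $UT_2$ need not be contained in any subvariety of $\var^L(UT_2)$, since the two $L$-varieties are incomparable. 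The viable route is the direct one you describe first, but it must be completed with the explicit highest-weight-vector computations.
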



\

Let now $\delta$ be the inner derivation of $UT_2$ induced by $2^{-1}e_{12}$, i.e., $$\delta(a)=2^{-1}[e_{12},a],~ \mbox{for~ all~} a\in UT_2.
$$
Let denote by $UT_2^\delta$ the $L$-algebra $UT_2$ where $L$ acts as the 1-dimensional Lie algebra spanned by $\delta$. 
The following remarks are easily verified.
\begin{rmk}
\label{rmkIdAllDerivations}
$[x,y][z,w]\equiv 0$, $[x,y]^{\delta}\equiv 0 $, $x^\delta y^\delta \equiv 0$, $x^\delta [y,z] \equiv 0 $ and $x^{\delta^2} \equiv 0$ are differential identities of $UT_{2}^{\delta}$.
\end{rmk}

\begin{rmk}
\label{rmkConseguenceAllDerivations}
$ x^{\delta}y[z,w],[x,y]zw^{\delta}, x^{\delta}yz^{\delta}\in\langle x^\delta y^\delta , x^\delta [y,z], [x,y]^{\delta} \rangle_{T_{L}} $.
\end{rmk}

\begin{rmk}
\label{rmkConseguenceAllDerivations2}
For any permutations $\sigma\in S_{t}$, we have
$$[x^{\delta}_{\sigma(1)},x_{\sigma(2)},\dots,x_{\sigma(t)}]\equiv [x^{\delta}_{1},x_{2},\dots,x_{t}]\hspace{0.20cm} (\bmod\hspace{0.07cm} \langle x^{\delta}[y,z],[x,y]^{\delta}\rangle_{T_{L}}).$$
\end{rmk}
\begin{proof}
Let $u_{1}, u_{2},u_{3}$ be monomials. We consider $w=u_{1}x_{i}x_{j}u_{2}y^{\delta}u_{3}$. Since $x_{i}x_{j}=x_{j}x_{i}+[x_{i},x_{j}]$, it follows that $w\equiv u_{1}x_{j}x_{i}u_{2}y^{\delta}u_{3}\hspace{0.20cm} (\bmod\hspace{0.07cm}  \langle x^{\delta}[y,z],[x,y]^{\delta} \rangle_{T_{L}} )$. In the same way we can show that $u_{1}y^{\delta}u_{2}z_{i}z_{j}u_{3}\equiv u_{1}y^{\delta}u_{2}z_{j}z_{i}u_{3}\hspace{0.20cm} (\bmod\hspace{0.07cm}  \langle x^{\delta}[w,z]\rangle_{T_{L}} )$. Hence in every monomial
$$x_{i_{1}}\dots x_{i_{t}}y^{\delta}z_{j_{1}}\dots z_{j_{p}}$$
we can reorder the variables to the left and to the right of $y^{\delta}$. Since $[x,y]^{\delta}=[x^{\delta},y]-[y^{\delta},x]$, we can reorder all the variables in any commutator $[x^{\delta}_{i_{1}},x_{i_{2}},\dots,x_{i_{t}}]$ as claimed.
\end{proof}

\begin{Lem}
\label{Lemmma generatori UT^d}
The $T_L$-ideal of identities of $UT_2^\delta$ is generated by the following polynomials
$$[x,y][z,w],\; [x,y]^{\delta},\; x^\delta [y,z],\; x^{\delta}y^{\delta},\; x^{\delta^{2}}.$$
%
\end{Lem}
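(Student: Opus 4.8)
The plan is to show that the $T_L$-ideal $Q = \langle [x,y][z,w],\; [x,y]^{\delta},\; x^\delta [y,z],\; x^{\delta}y^{\delta},\; x^{\delta^{2}}\rangle_{T_L}$ coincides with $\Id^L(UT_2^\delta)$. The inclusion $Q \subseteq \Id^L(UT_2^\delta)$ is exactly Remark \ref{rmkIdAllDerivations}, so the content is the reverse inclusion. For this I would proceed, as in the proofs of Theorem \ref{Thm U^e} and Theorem \ref{thmM^e}, by first producing a spanning set of $P_n^L$ modulo $P_n^L \cap Q$ of controlled size, and then showing those spanning elements are linearly independent modulo $\Id^L(UT_2^\delta)$ by exhibiting explicit matrix-unit evaluations in $UT_2$.

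\textbf{Step 1: reduce multilinear differential polynomials modulo $Q$.} Take $f \in P_n^L$. Every monomial of $f$ has the form $x_{j_1}^{h_1}\cdots x_{j_n}^{h_n}$ with each $h_i \in \{1,\delta\}$ (since $x^{\delta^2}\in Q$). Because $x^\delta y^\delta \in Q$, at most one variable carries a $\delta$. Consider first the monomials with no $\delta$: modulo $[x,y][z,w]$ these reduce, by the standard Malcev/Kemer argument, to the span of $x_1\cdots x_n$ together with $x_{i_1}\cdots x_{i_p}[x_{j_1},\dots,x_{j_q}]$-type products (one commutator, left/right-normed, with the outside variables ordered) — i.e. the ordinary $UT_2$ reduction recalled before Theorem \ref{ThmIdCnOrdinaryUT2}. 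Now consider monomials in which exactly one variable, say $x_k$, carries $\delta$. Using $x^\delta[y,z]\in Q$ (which kills $x^\delta$ immediately followed or preceded by a commutator) together with $[x,y]^\delta \in Q$, and the commutation manoeuvres of Remark \ref{rmkConseguenceAllDerivations2} and Remark \ref{rmkConseguenceAllDerivations}, one checks that such a monomial reduces modulo $Q$ to $x_k^\delta x_{i_1}\cdots x_{i_{n-1}}$ with $i_1 < \cdots < i_{n-1}$ — that is, $x_k^\delta$ can be pulled to the front and the remaining variables commuted past each other (the $\delta$-variable "absorbs" commutators, and no genuine commutator can survive next to it). So modulo $Q$, $P_n^L$ is spanned by the ordinary reduced $UT_2$-monomials together with the $n$ extra monomials $x_k^\delta x_{i_1}\cdots x_{i_{n-1}}$.

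\textbf{Step 2: linear independence modulo $\Id^L(UT_2^\delta)$.} Suppose a linear combination of these spanning monomials lies in $\Id^L(UT_2^\delta)$. The sub-combination of ordinary (derivation-free) monomials is an ordinary polynomial, and by $\Id(UT_2)$-theory (Theorem \ref{ThmIdCnOrdinaryUT2}) together with $P_n \cap \Id(UT_2) = P_n \cap \Id^L(UT_2^\delta)$ — which holds because evaluations in $UT_2$ that ignore the derivation see only ordinary identities — all those coefficients vanish. It remains to kill the coefficients $\beta_k$ of $x_k^\delta x_{i_1}\cdots x_{i_{n-1}}$. For fixed $k$, substitute $x_k = e_{11}$ and $x_j = e_{11}+e_{12}+e_{22}$ for $j \neq k$ (the identity element of $UT_2$): then $x_k^\delta = 2^{-1}[e_{12},e_{11}] = -2^{-1}e_{12}$, while $x_j^\delta = 0$, so the whole combination collapses to $\beta_k \cdot (-2^{-1}) e_{12} \cdot (\text{product of identities}) = -2^{-1}\beta_k e_{12}$, forcing $\beta_k = 0$. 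Hence the spanning set is a basis of $P_n^L$ modulo $P_n^L\cap \Id^L(UT_2^\delta)$, $Q = \Id^L(UT_2^\delta)$, and incidentally $c_n^L(UT_2^\delta) = c_n(UT_2) + n = 2^{n-1}(n-2) + 2 + n$, from which it also follows that $\var^L(UT_2^\delta)$ does not have almost polynomial growth (the ordinary subvariety $\var(UT_2)$ is a proper $L$-subvariety of exponential growth).

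\textbf{Main obstacle.} The delicate part is Step 1, specifically the bookkeeping that shows a monomial with one $\delta$-variable and nontrivial commutator structure really does reduce modulo $Q$ to $x_k^\delta x_{i_1}\cdots x_{i_{n-1}}$ with \emph{no surviving commutator}. One must argue carefully that every commutator bracket, once the generators $[x,y]^\delta$, $x^\delta[y,z]$, $x^\delta y^\delta$ are available, can be dissolved: a commutator to the right of $x_k^\delta$ dies by $x^\delta[y,z]$; a commutator wrapping $x_k^\delta$ itself dies by $[x,y]^\delta$ (after expanding $[x,y]^\delta = [x^\delta,y] - [y^\delta,x]$ as in Remark \ref{rmkConseguenceAllDerivations2}); and a commutator purely to the left of $x_k^\delta$ — say $[x_a,x_b]\,w\,x_k^\delta$ — must be rewritten using $[x,y]z w^\delta \in \langle x^\delta y^\delta, x^\delta[y,z], [x,y]^\delta\rangle_{T_L}$ from Remark \ref{rmkConseguenceAllDerivations}. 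Assembling these reductions into a clean normal form, and verifying the spanning count is exactly $n$ extra monomials, is where the real work lies; the independence argument in Step 2 is then routine matrix-unit evaluation of the kind already used repeatedly in Section \ref{sec:Codimension bounded}.
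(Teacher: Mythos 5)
Your Step 1 contains a genuine error: the normal form you propose is too small. You claim that every monomial carrying one $\delta$ reduces modulo $Q$ to $x_k^\delta x_{i_1}\cdots x_{i_{n-1}}$ with ``no surviving commutator,'' but commutators of the form $[x_{l_1}^{\delta},x_{l_2},\dots,x_{l_m}]$ do \emph{not} dissolve modulo $Q$. The generators only kill $x^\delta[y,z]$ (a $\delta$-variable multiplied by a commutator of \emph{ordinary} variables) and $[x,y]^{\delta}$; the latter expands as $[x^{\delta},y]+[x,y^{\delta}]$, which merely lets you rewrite $[x^{\delta},y]$ as $[y^{\delta},x]$ up to sign --- it does not reduce $[x^{\delta},y]$ to a monomial. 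Indeed $[x^{\delta},y]$ is not a differential identity of $UT_2^{\delta}$ (evaluate $x=y=e_{11}$: one gets $2^{-1}e_{12}\neq 0$), so $x^{\delta}y\not\equiv y\,x^{\delta}$ modulo $Q$ and the $\delta$-variable cannot simply be ``pulled to the front.'' The paper's basis therefore necessarily includes, besides the $n$ monomials $x_{h_1}\cdots x_{h_{n-1}}x_r^{\delta}$, the elements $x_{i_1}\cdots x_{i_m}[x_{l_1}^{\delta},x_{l_2},\dots,x_{l_{n-m}}]$ (ordered outside variables, one surviving left-normed commutator headed by the $\delta$-variable, reordered inside via Remark~\ref{rmkConseguenceAllDerivations2}); there are $2^n-n-1$ of these, which is exactly the discrepancy between your count $c_n^L(UT_2^{\delta})=2^{n-1}(n-2)+2+n$ and the correct value $2^{n-1}n+1$. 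Already for $n=2$ your spanning set has $4$ elements while $\dim P_2^L(UT_2^{\delta})=5$, the missing element being (a representative of) $[x_1^{\delta},x_2]$.

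The correct route, which the paper follows, is to apply the Poincar\'e--Birkhoff--Witt normal form $x_{i_1}^{\alpha_1}\cdots x_{i_k}^{\alpha_k}w_1\cdots w_m$ (ordered variables times left-normed commutators), use $[x^{\alpha_1},y^{\alpha_2}][z^{\alpha_3},w^{\alpha_4}]\in Q$ to limit to at most one commutator, and then treat separately the case where the surviving commutator is ordinary (reduced as for $UT_2$) and the case where it is headed by a $\delta$-variable (reduced only up to reordering, not eliminated). A secondary slip in your Step 2: $e_{11}+e_{12}+e_{22}$ is not the identity element of $UT_2$ (that is $e_{11}+e_{22}$), though its $\delta$-image does happen to vanish; and your concluding codimension formula contradicts the value $2^{n-1}n+1$ that the independence argument must ultimately produce.
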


\begin{proof}
Let $ Q=\langle [x,y][z,w],[x,y]^{\delta},x^\delta [y,z], x^{\delta}y^{\delta}, x^{\delta^{2}}\rangle_{T_{L}}$. By Remark \ref{rmkIdAllDerivations}, $ Q\subseteq \Id^{L}(UT_{2}^\delta) $. 

By the Poincar\'{e}-Birkhoff-Witt Theorem (see \cite{Procesi2006}) every differential multilinear polynomial in $x_{1}, \dots, x_{n}$ can be written as a linear combination of products of the type
\begin{equation}
\label{eqP-B-W}
x_{i_{1}}^{\alpha_{1}}\dots x_{i_{k}}^{\alpha_{k}} w_{1}\dots w_{m}
\end{equation}
where $\alpha_{1},\dots,\alpha_{k}\in U(L)$, $w_{1}\dots,w_{m}$ are left normed commutators in the $x_{i}^{\alpha_{j}}$s, $\alpha_{j}\in U(L)$, and $i_{1}<\dots<i_{k}$. Since $[x_{1}^{\alpha_{1}},x_{2}^{\alpha_{2}}][x_{3}^{\alpha_{3}},x_{4}^{\alpha_{4}}]\in Q$, with $\alpha_{1},\alpha_{2},\alpha_{3},\alpha_{4}\in\{1,\delta\}$, then, modulo $\langle [x_{1}^{\alpha_{1}},x_{2}^{\alpha_{2}}][x_{3}^{\alpha_{3}},x_{4}^{\alpha_{4}}], x^{\delta^{2}}\rangle_{T_{L}}$, in \eqref{eqP-B-W} we have $\alpha_{j}\in\{1,\delta\}$ and $m\leq 1$, so, only at most one commutator can appear in \eqref{eqP-B-W}. Thus by Remark \ref{rmkConseguenceAllDerivations} every multilinear monomial in $P_{n}^{L}$ can be written, modulo $Q$, as linear combination of the elements of the type
$$x_{1}\dots x _{n},\quad x_{h_{1}}\dots x_{h_{n-1}}x_{j}^{\delta}, \quad x_{i_{1}}\dots x_{i_{k}}[x_{j_{1}}^{\gamma},x_{j_{2}},\dots,x_{j_{m}}],$$
where $ h_{1}<\dots<h_{n-1} $, $ i_{1}<\dots<i_{k} $, $m+k=n$, $m\geq 2$, $\gamma\in\{1,\delta\}$. 

Let us now consider the left normed commutators $[x_{j_{1}}^{\gamma},x_{j_{2}},\dots,x_{j_{m}}]$ and suppose first that $\gamma=1$. Since $[x_{1},x_{2}][x_{3},x_{4}]\in Q$, then by Theorem \ref{ThmIdCnOrdinaryUT2}
$$[x_{j_{1}},x_{j_{2}},\dots,x_{j_{m}}]\equiv [x_{k},x_{h_{1}},\dots,x_{h_{m-1}}]\hspace{0.20cm} (\bmod\hspace{0.07cm}  Q),$$
where $k>h_{1}<\dots<h_{m-1}$.

Suppose now $\gamma=\delta$, then by Remark \ref{rmkConseguenceAllDerivations2} we get
$$[x_{j_{1}}^{\delta},x_{j_{2}},\dots,x_{j_{m}}] \equiv [x^{\delta}_{1},x_{2},\dots,x_{t}]\hspace{0.20cm} (\bmod\hspace{0.07cm} \langle x^{\delta}[y,z],[x,y]^{\delta}\rangle_{T_{L}}).$$
It follows that $P_{n}^{L}$ is spanned, modulo $P_{n}^{L}\cap Q$, by the polynomials
\begin{align}
&x_{1}\dots x_{n}, \quad x_{i_{1}}\dots x_{i_{m}}[x_{k},x_{j_{1}},\dots,x_{j_{n-m-1}}],\nonumber \\
&x_{h_{1}}\dots x_{h_{n-1}}x_{r}^{\delta},\quad x_{i_{1}}\dots x_{i_{m}}[x_{l_{1}}^{\delta},x_{l_{2}},\dots,x_{l_{n-m}}],\label{EqBaseNoIdentityAllDerivations}
\end{align}
where $i_{1}<\dots<i_{m}$, $k>j_{1}<\dots<j_{n-m-1}$, $h_{1}<\dots<h_{n-1}$, $l_{1}<\dots<l_{n-m}$, $m\neq n-1,n $.

Next we show that these polynomials are linearly independent modulo $\Id^{L}(UT_{2} ^\delta)$. Let $I=\{i_{1},\dots,i_{m}\}$ be a subset of $\{1,\dots,n\}$ and $k\in  \{1,\dots,n\}\setminus I$ such that $k>\min ( \{1,\dots,n\}\setminus I)$,then set $X_{I,k}=x_{i_{1}}\dots x_{i_{m}}[x_{k},x_{j_{1}},\dots,x_{j_{n-m-1}}]$. Also for $I^{'}=\{i_{1},\dots,i_{m}\}\subseteq\{1,\dots,n\}$, $0\leq \vert I^{'}\vert< n-1$, set $X_{I^{'}}^{\delta}= x_{i_{1}}\dots x_{i_{m}}[x_{l_{1}}^{\delta},x_{l_{2}},\dots,x_{l_{n-m}}]$ and suppose that
\begin{align*}
f=\sum_{I,J} \alpha_{I,k} X_{I,k}+\sum_{I^{'}} \alpha_{I^{'}}^{\delta} X_{I^{'}}^{\delta}+&\sum_{k=1}^{n}\alpha_{r}^{\delta}x_{h_{1}}\dots x_{h_{n-1}}x_{r}^{\delta}\\
&+\beta x_{1}\dots x_{n}\equiv 0\hspace{0.20cm} (\bmod\hspace{0.07cm} P_{n}^{L}\cap \Id^{L}(UT_{2}^\delta)).
\end{align*}
In order to show that all coefficients $\alpha_{I,k}$, $\alpha_{I^{'}}^{\delta}$, $\alpha_{r}^{\delta}$, $\beta$ are zero we will make some evaluations. If we evaluate $x_{1}=\dots=x_{n}=e_{11}+e_{22}$ we get $\beta=0$. For a fixed $r$, by setting $x_{h_{1}}=\dots=x_{h_{n-1}}=e_{11}+e_{22}$ and $x_{r}=e_{22}$ we get $\alpha_{r}^{\delta}=0$. Also, for a fixed $I^{'}=\{i_{1},\dots,i_{m}\}$, by making the evaluations $x_{i_{1}}=\dots=x_{i_{m}}=e_{11}+e_{22}$, $x_{l_{1}}=\dots=x_{l_{n-m}}=e_{22}$ we obtain $\alpha_{I^{'}}^{\delta}=0$. Finally, for fixed $I=\{i_{1},\dots,i_{m}\}$ and $ J=\{j_{1},\dots,j_{n-m-1}\} $, from the substitutions $x_{i_{1}}=\dots=x_{i_{m}}=e_{11}+e_{22}$, $x_{k}=e_{12}$, $x_{j_{1}}=\dots=x_{j_{n-m-1}}=e_{22}$, it follows that $\alpha_{I,k}=0$.

We have proved that $\Id^{L}(UT_{2}^\delta)=Q$ and the elements in \eqref{EqBaseNoIdentityAllDerivations} are a basis of $P_{n}^{L}$ modulo $P_{n}^{L}\cap \Id^{L}(UT_{2}^\delta)$.
\end{proof}

We now compute the $n$th differential cocharacter of $UT_2^\delta$. Write
\begin{equation}
\label{CocharacterDer}
\chi_{n}^{L}(UT_{2}^\delta)=\sum_{\lambda\vdash n} m_{\lambda}^\delta \chi_{\lambda}.
\end{equation}
In the following lemmas we compute the non-zero multiplicities of such cocharacter.

%

\begin{Lem}
\label{LemMoltRiga delta}
In \eqref{CocharacterDer}  $m_{(n)}^\delta \geq n+1$.
\end{Lem}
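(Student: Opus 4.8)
The goal is to show that the multiplicity of the trivial character $\chi_{(n)}$ in the $n$th differential cocharacter of $UT_2^\delta$ is at least $n+1$. Since $\deg\chi_{(n)}=1$, it suffices to exhibit $n+1$ differential polynomials that are linear combinations of highest weight vectors associated to the one-row tableau $T_{(n)}$, that do not vanish on $UT_2^\delta$, and that are linearly independent modulo $\Id^L(UT_2^\delta)$. Following the pattern used in Theorem \ref{Thm U^e} and Theorem \ref{UT2e}, the natural candidates are obtained from the essential idempotent attached to $T_{(n)}$ by identifying all variables in the single row. The plan is to take the $n+1$ monomials
\[
g_0=x^n,\qquad g_k=x^{\,k-1}\,x^{\delta}\,x^{\,n-k}\quad(k=1,\dots,n),
\]
each of which lies in the $S_n$-submodule generated by a highest weight vector for $(n)$, and to prove they are independent modulo $\Id^L(UT_2^\delta)$.

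First I would record why these polynomials are nonzero modulo the $T_L$-ideal and why each one individually corresponds to the partition $(n)$: identifying all row entries in the Young symmetrizer $e_{T_{(n)}}$ just symmetrizes, so each $g_k$ is (a scalar multiple of) the image of a generator of a copy of the trivial $S_n$-module. Next comes the core computation, which is the linear independence. Suppose $\sum_{k=0}^n \beta_k g_k\equiv 0$ on $UT_2^\delta$; I would evaluate on suitable elements of $UT_2$. Using $\delta(a)=2^{-1}[e_{12},a]$, one computes $\delta(e_{11})=2^{-1}e_{12}$, $\delta(e_{22})=-2^{-1}e_{12}$, $\delta(e_{12})=0$, $\delta(e_{11}+e_{22})=0$. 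Evaluating $x=e_{11}+e_{22}$ kills all $g_k$ with $k\geq1$ (each contains a factor $x^\delta\mapsto0$) and gives $\beta_0=0$. To separate the remaining $\beta_k$, I would evaluate $x=e_{11}+e_{12}$ (or $x=e_{11}+te_{12}$ with a parameter $t$): then $x^\delta=2^{-1}e_{12}$ and $x^{\,j}=e_{11}+j e_{12}$, so $g_k$ evaluates to $e_{11}^{\,k-1}(2^{-1}e_{12})e_{11}^{\,n-k}$ multiplied out, and the various powers of the "$e_{12}$-position" weight the $g_k$'s differently; a Vandermonde-type argument in the parameter $t$ forces all $\beta_k=0$. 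Alternatively one can use $x=e_{22}+e_{12}$ and $x=e_{11}+e_{12}$ together with the grading.

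A cleaner route, which I would actually prefer, is to appeal to the explicit basis of $P_n^L$ modulo $P_n^L\cap\Id^L(UT_2^\delta)$ already produced in Lemma \ref{Lemmma generatori UT^d}, equation \eqref{EqBaseNoIdentityAllDerivations}: rewrite each $g_k$ in terms of that basis. Modulo $Q=\Id^L(UT_2^\delta)$ one has $x^\delta x\equiv xx^\delta$ and $x\,x^\delta\not\equiv x^\delta x$ is false — in fact $[x,x^\delta]$ need not vanish, so one must be careful; here the relations $[x,y]^\delta\equiv0$, $x^\delta[y,z]\equiv0$ let one move $x^\delta$ to the right past equal variables up to commutator terms $[x,x^\delta]$, and these commutators $x^{m}[x,x^\delta]x^{p}$ are themselves among the basis elements $x_{i_1}\cdots x_{i_m}[x_{l_1}^\delta,x_{l_2},\dots]$ after full linearization. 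Tracking coefficients in the multilinearization $g_k(x_1,\dots,x_n)$ and matching against \eqref{EqBaseNoIdentityAllDerivations} shows the $g_k$ produce distinct basis vectors, hence are independent. I expect the main obstacle to be precisely this bookkeeping: making sure that after full multilinearization the $n+1$ symmetrized monomials really do hit $n+1$ linearly independent classes, i.e. that no nontrivial cancellation occurs among the commutator tails when passing through the identities $x^\delta y^\delta\equiv0$, $x^\delta[y,z]\equiv0$, $[x,y]^\delta\equiv0$, $x^{\delta^2}\equiv0$. Once the evaluations above are carried out carefully this is routine, and the conclusion $m_{(n)}^\delta\geq n+1$ follows.
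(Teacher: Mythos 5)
Your choice of the $n+1$ candidates $g_0=x^n$, $g_k=x^{k-1}x^\delta x^{n-k}$ is exactly the paper's, and the first evaluation $x=e_{11}+e_{22}$ (which annihilates $x^\delta$ and gives $\beta_0=0$) is correct. But the core separating step fails as written. With $x=e_{11}+te_{12}$ you claim $x^j=e_{11}+je_{12}$; in fact $e_{12}e_{11}=0$ and $e_{12}^2=0$, so $(e_{11}+te_{12})^2=e_{11}+te_{12}$ is idempotent and $x^j=e_{11}+te_{12}$ for all $j\ge 1$. Consequently $x^{k-1}x^\delta x^{n-k}$ evaluates (up to scalar) to $e_{12}\cdot(e_{11}+te_{12})=0$ whenever $k<n$, so this substitution only detects $\beta_n$; likewise $x=e_{22}+te_{12}$ only detects $\beta_1$. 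There is no $t$-dependence left to feed a Vandermonde argument, so the intermediate coefficients $\beta_2,\dots,\beta_{n-1}$ are never controlled. The fix is to evaluate on a genuinely non-idempotent diagonal element: the paper takes $x=\beta e_{11}+e_{22}$, for which $x^j=\beta^j e_{11}+e_{22}$ and $x^\delta=2^{-1}(1-\beta)e_{12}$, whence $g_k\mapsto 2^{-1}(1-\beta)\beta^{k-1}e_{12}$ and distinct choices $\beta_1,\dots,\beta_n$ (nonzero and $\neq 1$) yield a Vandermonde system forcing all $\beta_k=0$.

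Your alternative route --- rewriting each multilinearized $g_k$ against the basis \eqref{EqBaseNoIdentityAllDerivations} of Lemma \ref{Lemmma generatori UT^d} --- is viable in principle, but you explicitly defer the bookkeeping that is the whole content of that argument (checking that no cancellation occurs among the commutator tails produced by $[x,y]^\delta\equiv 0$ and $x^\delta[y,z]\equiv 0$), so as it stands neither route in the proposal closes the proof.
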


\begin{proof}
We consider the following tableau:
$$T_{(n)}=\begin{array}{|c|c|c|c|}\hline
1 & 2 & \dots &n \\ \hline
\end{array}\;.$$
We associate to $T_{(n)}$ the monomials
\begin{equation}
\label{a}
a(x)=x^{n},
\end{equation}
\begin{equation}
\label{a^epsilon}
a_{k}^{(\delta)}(x)=x^{k-1}x^{\delta}x^{n-k},
\end{equation}
for all $k=1,\dots,n$. These monomials are obtained from the essential idempotents corresponding to the tableau $T_{(n)}$ by identifying all the elements in the row. It is easily checked that $a(x)$, $a_{k}^{(\delta)}(x)$, $k=1,\dots,n$, do not vanish in $UT_{2}^{\delta}$.

Next we shall prove that the $n+1$ monomials $a(x)$, $a_{k}^{(\delta)}(x)$, $k=1,\dots,n$, are linearly independent modulo $\Id^{L}(UT_{2}^\delta)$. In fact, suppose that
$$\alpha a(x)+\sum_{k=1}^{n}\alpha^{\delta}_{k} a_{k}^{(\delta)}(x) \equiv 0\hspace{0.20cm} (\bmod\hspace{0.07cm}  \Id^{L}(UT_{2}^\delta)).$$
By setting $x=e_{11}+e_{22}$ it follows that $\alpha=0$. Moreover, if we substitute $x=\beta e_{11}+e_{22}$ where $\beta\in F$, $\beta\neq 0$, we get $ \sum_{k=1}^{n} (1-\beta)\beta^{k-1}\alpha^{\delta}_{k}=0$. Since $|F|=\infty$, we can choose $\beta_{1},\dots,\beta_{n}\in F$, where $\beta_{i}\neq 0$ and $\beta_{i}\neq\beta_{j}$, for all $1\leq i\neq j\leq n$. Then we get the following homogeneous linear system of $n$ equations in the $n$ variables $\alpha^{\delta}_{k}$, $k=1,\dots,n$,
\begin{equation}
\label{SistemaTabella1rigaAllDerivatios}
\sum_{k=1}^{n}\beta_{i}^{k-1}\alpha^{\delta}_{k}=0,\quad i=1,\dots,n.
\end{equation}
Since the matrix associated to the system \eqref{SistemaTabella1rigaAllDerivatios} is a Vandermonde matrix, it follows that $\alpha^{\delta}_{k}=0$, for all $k=1,\dots,n$. Thus the monomials $a(x)$,  $a_{k}^{(\delta)}(x)$, $k=1,\dots,n$, are linearly independent modulo $\Id^{L}(UT_{2}^\delta)$. This says that $m_{(n)}^\delta \geq n+1$.
\end{proof}

\begin{Lem}
\label{LemMolt2Righe delta}
Let $p\geq 1$ and $q\geq 0$. If $\lambda=(p+q,p)$ then in \eqref{CocharacterDer} we have $m_{\lambda}^\delta \geq 2(q+1)$.
\end{Lem}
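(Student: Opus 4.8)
The plan is to exhibit, for each partition $\lambda=(p+q,p)$ with $p\geq 1$, $q\geq 0$, a set of $2(q+1)$ differential polynomials built from essential idempotents associated to standard tableaux of shape $\lambda$, and then to show they are linearly independent modulo $\Id^L(UT_2^\delta)$ by suitable evaluations in $UT_2^\delta$. Following the pattern of Theorem \ref{UT2e} and the known ordinary case (Theorem \ref{ThmIdCnOrdinaryUT2}), the contribution $q+1$ already comes from the ordinary identities of $UT_2$ (via Remark \ref{Rmk Codimensions and multiplicity}, part (2), and Theorem \ref{ThmIdCnOrdinaryUT2}), so $m_\lambda^\delta\geq q+1$ is free. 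The task is to produce a \emph{second} family of $q+1$ highest weight vectors that genuinely use the derivation $\delta$ and remain independent of the ordinary ones.

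First I would fix a tableau of shape $(p+q,p)$ in which the first column carries the symmetrization in two variables, say $x$ and $y$, and the remaining boxes of the first row carry $q$ further occurrences of $x$; the standard recipe produces polynomials that are alternating in $x,y$ on the height-two columns and symmetric in the surplus $x$'s. Concretely I expect the relevant highest weight vectors to look like products of $p-1$ factors $[x,y]$ (or their symmetrized analogues) with one "head" factor, where the head is either an ordinary commutator $[x,y]$ or a differential expression such as $x^\delta y - y^\delta x$ (cf. the polynomial $f_{(n-1,1)}$ in Theorem \ref{Thm U^e} and Remark \ref{rmkConseguenceAllDerivations2}), interleaved with $q$ extra symmetric $x$'s distributed among the positions allowed by the basis \eqref{EqBaseNoIdentityAllDerivations}. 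This gives one family $g_0,\dots,g_q$ of ordinary type and a second family $g_0^\delta,\dots,g_q^\delta$ of differential type, $2(q+1)$ polynomials in all. One checks first that none of them is a differential identity of $UT_2^\delta$: evaluating $x=e_{11}$, $y=e_{12}$ (together with the diagonal idempotent on the surplus slots, as in Lemma \ref{Lemmma generatori UT^d}) makes the ordinary family survive, and an evaluation involving $x=\beta e_{11}+e_{22}+e_{12}$, $y=e_{12}$ activates $\delta$ and makes the differential family survive.

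The core step is linear independence modulo $\Id^L(UT_2^\delta)$. I would write a generic linear combination $\sum_{j=0}^q \alpha_j g_j + \sum_{j=0}^q \alpha_j^\delta g_j^\delta \equiv 0$ and apply a graded sequence of evaluations exactly in the spirit of the proof of Lemma \ref{Lemmma generatori UT^d}: an evaluation with all surplus variables equal to $e_{11}+e_{22}$ isolates the $j=0$ terms; then, exploiting $|F|=\infty$ and substitutions $x\mapsto \beta e_{11}+e_{22}$ (resp. with an added $e_{12}$), one gets, for each block, a homogeneous linear system whose coefficient matrix is Vandermonde (this is precisely the device used in Lemma \ref{LemMoltRiga delta}, equation \eqref{SistemaTabella1rigaAllDerivatios}), forcing all $\alpha_j=0$ and all $\alpha_j^\delta=0$. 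Separating the ordinary block from the differential block is handled by first setting $x^\delta$-sensitive parameters to zero (killing the $g_j^\delta$), deducing $\alpha_j=0$ from the ordinary multiplicities of Theorem \ref{ThmIdCnOrdinaryUT2}, and then turning $\delta$ on to treat the remaining $g_j^\delta$.

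The main obstacle I anticipate is purely combinatorial bookkeeping: choosing the standard tableaux and the corresponding essential idempotents so that, after reduction modulo $Q=\Id^L(UT_2^\delta)$ using Remarks \ref{rmkConseguenceAllDerivations} and \ref{rmkConseguenceAllDerivations2}, the resulting highest weight vectors land in the explicit spanning set \eqref{EqBaseNoIdentityAllDerivations} in a way that keeps the ordinary and differential families visibly distinct and makes the evaluation-matrices genuinely Vandermonde. Once the right normal forms are pinned down, the non-vanishing and independence checks are routine matrix-unit computations of the same flavour as those already carried out in Lemmas \ref{Lemmma generatori UT^d} and \ref{LemMoltRiga delta}. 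A final remark: since the multiplicities are bounded by those of $UT_2^\varepsilon$ enlarged by the $\Der(UT_2)$-action and by the ordinary ones, one should also note the matching upper bound $m_\lambda^\delta\leq 2(q+1)$ will follow from the codimension count $c_n^L(UT_2^\delta)$ once all multiplicities are assembled, so here only the lower bound is needed.
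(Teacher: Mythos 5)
Your construction is essentially the one the paper uses: for each $i=0,\dots,q$ one takes a tableau of shape $(p+q,p)$ and forms the two highest weight vectors $b^{(p,q)}_{i}$ and $b^{(p,q,\delta)}_{i}$ of \eqref{b^p,q} and \eqref{b^p,q,epsilon}, i.e.\ an alternating product with ordinary head $[x,y]$ and one with differential head $x^{\delta}y-y^{\delta}x$, the $q$ surplus $x$'s split as $x^{i}(\cdots)x^{q-i}$; independence of these $2(q+1)$ polynomials is then obtained from substitutions $x=\beta e_{11}+(\cdots)+e_{22}$, $y=e_{11}$ with $q+1$ distinct nonzero $\beta$'s and a Vandermonde argument, exactly as you propose.

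The one step you should fix is the separation of the ordinary block from the differential block. You propose to eliminate the $g_j^{\delta}$ \emph{first} by ``setting the $x^{\delta}$-sensitive parameters to zero''. If this means substituting elements $a\in UT_2$ with $a^{\delta}=0$, it fails: since $\delta=\ad(2^{-1}e_{12})$, the kernel of $\delta$ in $UT_2$ is $F1+Fe_{12}$, which is commutative, so such substitutions annihilate the ordinary heads $[x,y]$ as well and yield no information on the $\alpha_j$. The paper eliminates in the opposite order: the substitution $x=\beta e_{11}+e_{22}$, $y=e_{11}$ is diagonal, hence kills every $[x,y]$, while $x^{\delta}y-y^{\delta}x$ evaluates to $2^{-1}e_{12}\neq 0$ there; so only the $b^{(p,q,\delta)}_{i}$ survive, Vandermonde gives $\alpha_i^{\delta}=0$, and then $x=\beta e_{11}+e_{12}+e_{22}$, $y=e_{11}$ disposes of the ordinary block. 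Your order can be salvaged by reading ``set $x^{\delta}$ to zero'' as the formal projection $P_n^{L}\to P_n$, $x^{\delta}\mapsto 0$, which maps $\Id^{L}(UT_2^{\delta})$ into $\Id(UT_2)$ because $UT_2$ with the trivial action lies in $\var^{L}(UT_2^{\delta})$; then Theorem \ref{ThmIdCnOrdinaryUT2} indeed forces $\alpha_j=0$ and the remaining purely differential relation is handled by the Vandermonde system. Either way the lemma follows, but the mechanism of the separation must be made explicit, since the literal ``evaluate where $\delta$ vanishes'' version is the one step of your outline that does not go through.
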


\begin{proof}
For every $i=0,\dots,q$ we define $T_{\lambda}^{(i)}$ to be the tableau
\begin{small}
\begin{equation*}
\begin{array}{|c|c|c|c|c|c|c|c|c|c|c|}\hline
i+1 & i+2 & \dots & i+p-1 & i+p & 1 & \dots & i & i+2p+1 & \dots & n \\ \hline
i+p+2 & i+p+3 & \dots & i+2p & i+p+1\\ \cline{1-5}
\end{array}\:.
\end{equation*}
\end{small}
We associate to $T_{\lambda}^{(i)}$ the polynomials
\begin{equation}
\label{b^p,q}
b^{(p,q)}_{i}(x,y)=x^{i}\underbrace{\overline{x}\dots\widetilde{x}}_{p-1}[x,y]\underbrace{\overline{y}\dots\widetilde{y}}_{p-1}x^{q-i},
\end{equation}
\begin{equation}
\label{b^p,q,epsilon}
b^{(p,q,\delta)}_{i}(x,y)=x^{i}\underbrace{\overline{x}\dots\widetilde{x}}_{p-1}(x^{\delta}y-y^{\delta}x)\underbrace{\overline{y}\dots\widetilde{y}}_{p-1}x^{q-i},
\end{equation}
where the symbols $-$ or $\thicksim$ means alternation on the corresponding variables. The polynomials $b^{(p,q)}_{i}$, $b^{(p,q,\delta)}_{i}$ are obtained from the essential idempotents corresponding to the tableau $T_{\lambda}^{(i)}$ by identifying all the elements in each row of the tableau. It is clear that $b^{(p,q)}_{i}$, $b^{(p,q,\delta)}_{i}$, $i=0,\dots,q$, are not differential identities of $UT_{2}^{\delta}$. We shall prove that the above $2(q+1)$ polynomials are linearly independent modulo $\Id^{L}(UT_{2}^\delta)$. Suppose that
$$\sum_{i=0}^{q}\alpha_{i}b^{(p,q)}_{i}+\sum_{i=0}^{q}\alpha^{\delta}_{i}b^{(p,q,\delta)}_{i}\equiv 0 \hspace{0.20cm} (\bmod\hspace{0.07cm}  \Id^{L}(UT_{2}^\delta)).$$
If we set $x=\beta e_{11}+e_{22}$, with $\beta\in F$, $\beta\neq 0$, and $y=e_{11}$, we obtain
$$\sum_{i=0}^{q}(-1)^{p-1}\beta^{i} \alpha^{\delta}_{i}=0.$$
Since $|F|=\infty$, we can take $\beta_{1},\dots,\beta_{q+1}\in F$, where $\beta_{j}\neq 0$, $\beta_{j}\neq\beta_{k}$, for all $1\leq j\neq k\leq q+1$. Then we obtain the following homogeneous linear system of $q+1$ equations in the $q+1$ variables $\alpha^{\delta}_{i}$, $i=0,\dots,q$,
\begin{equation}
\label{SistemaTabella2riga}
\sum_{i=0}^{q}\beta_{j}^{i}\alpha^{\delta}_{i}=0,\quad j=1,\dots,q+1.
\end{equation}

Since the matrix of this system is a Vandermonde matrix, it follows that $\alpha^{\delta}_{i}=0$, for all $i=0,\dots,q$. Hence we may assume that the following identity holds
$$\sum_{i=0}^{q}\alpha_{i}b^{(p,q)}_{i}\equiv 0 \hspace{0.20cm} (\bmod\hspace{0.07cm}  \Id^{L}(UT_{2}^\delta)).$$

If we evaluate $x=\beta e_{11}+e_{12}+e_{22}$, where $\beta\in F$,  $\beta\neq 0$, and $y=e_{11}$, then we get
\begin{equation}
\label{EquTabella2righe}
\sum_{i=0}^{q}(-1)^{p-1}\beta^{i}\alpha_{i}=0.
\end{equation}
Since $|F|=\infty$, we choose $\beta_{1},\dots,\beta_{q+1}\in F$, where $\beta_{j}\neq 0$, $\beta_{j}\neq\beta_{k}$, for all $1\leq j\neq k\leq q+1$. Then from \eqref{EquTabella2righe} we obtain a homogeneous linear system of $q+1$ equations in the $q+1$ variables $\alpha_{i}$, $i=0,\dots,q$, equivalent to the linear system \eqref{SistemaTabella2riga}. Therefore $\alpha_{i}=0$, for all $i=0,\dots,q$. Hence the polynomials $b^{(p,q)}_{i}$, $b^{(p,q,\delta)}_{i}$, $i=0,\dots,q$, are linearly independent modulo $\Id^{L}(UT_{2}^\delta)$ and, so, $m_{\lambda}^\delta \geq 2(q+1)$.
\end{proof}

As an immediate consequence of Remark \ref{Rmk Codimensions and multiplicity} and Theorem \ref{ThmIdCnOrdinaryUT2} we have the following.
\begin{Lem}
\label{LemMolt3Righe}
Let $p\geq 1$ and $q\geq 0$. If $\lambda=(p+q,p,1)$, then in \eqref{CocharacterDer} we have $m_{\lambda}^\delta\geq q+1$.
\end{Lem}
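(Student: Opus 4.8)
The plan is to read off this inequality directly from what we already know about the \emph{ordinary} polynomial identities of $UT_2$, exploiting the fact that the multilinear ordinary polynomials sit inside the multilinear differential ones. The underlying associative algebra of $UT_2^\delta$ is just $UT_2$, so its ordinary $T$-ideal of identities is $\Id(UT_2)=\langle[x_1,x_2][x_3,x_4]\rangle_T$, and consequently its ordinary $n$th cocharacter coincides with $\chi_n(UT_2)$, whose irreducible decomposition is recorded in Theorem \ref{ThmIdCnOrdinaryUT2}(3).

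First I would invoke the general inequality of Remark \ref{Rmk Codimensions and multiplicity}(2): since $U(L)$ is unital we have $P_n\subseteq P_n^L$ and $P_n\cap\Id(UT_2^\delta)=P_n\cap\Id^L(UT_2^\delta)$, whence for every $\lambda\vdash n$ the multiplicity $m_\lambda$ of $\chi_\lambda$ in the ordinary cocharacter $\chi_n(UT_2^\delta)$ is bounded above by the multiplicity $m_\lambda^\delta$ of $\chi_\lambda$ in the differential cocharacter \eqref{CocharacterDer}.

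Then I would specialize to $\lambda=(p+q,p,1)\vdash n$, where $n=2p+q+1$. By Theorem \ref{ThmIdCnOrdinaryUT2}(3) the multiplicity of $\chi_{(p+q,p,1)}$ in $\chi_n(UT_2)$ equals $q+1$, so combining with the previous step gives
$$m_{(p+q,p,1)}^\delta\geq m_{(p+q,p,1)}=q+1,$$
which is exactly the assertion. I do not expect any real obstacle here: the argument is a one-line consequence of Remark \ref{Rmk Codimensions and multiplicity} together with the known ordinary cocharacter of $UT_2$. The only point needing a word of care is the identification of the ordinary identities of $UT_2^\delta$ with those of $UT_2$ --- equivalently, that the derivation $\delta$ becomes irrelevant once one restricts attention to $P_n$ --- and this is immediate from the definitions, since every element of $P_n$ is a differential polynomial involving only the variables $x_i=x_i^{1}$, $1\in U(L)$.
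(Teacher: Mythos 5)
Your proposal is correct and is precisely the argument the paper intends: the lemma is stated there as an immediate consequence of Remark \ref{Rmk Codimensions and multiplicity} and Theorem \ref{ThmIdCnOrdinaryUT2}, i.e.\ $m_{(p+q,p,1)}^\delta\geq m_{(p+q,p,1)}=q+1$ via the inclusion $P_n\subseteq P_n^L$. Your added remark that the ordinary identities of $UT_2^\delta$ are just those of $UT_2$ is the right (and only) point of care, and it holds for exactly the reason you give.
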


We are now in a position to prove the following theorem about the $L$-algebra $UT_2^\delta$.

\begin{thm}
	\begin{enumerate}
		\item $\Id^{L}(UT_2^\delta)=\langle [x,y][z,w],[x,y]^{\delta},x^\delta [y,z], x^{\delta}y^{\delta}, x^{\delta^{2}}\rangle_{T_{L}}$.
		\vspace{1mm}
		
		\item $c_n^{L}(UT_2^\delta)=2^{n-1}n+1$.

\vspace{1mm}
\item If $\chi_{n}^{L}(UT_{2}^\delta)=\sum_{\lambda\vdash n} m_{\lambda}^\delta \chi_{\lambda}$ is the $n$th differential cocharacter of $UT_{2}^{\delta}$, then
$$m_{\lambda}^\delta =\begin{cases}
		n+1, & \mbox{ if } \lambda=(n) \\
		2(q+1), & \mbox{ if } \lambda=(p+q,p) \\
		q+1, & \mbox{ if } \lambda=(p+q,p,1) \\
		0 & \mbox{ in all other cases}
		\end{cases}.$$
	\end{enumerate}
\end{thm}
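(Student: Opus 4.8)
The plan is to assemble the three parts from the lemmas already established. Part (1) is exactly Lemma~\ref{Lemmma generatori UT^d}, so nothing remains to be done there. For part (3), the lower bounds $m_{(n)}^\delta\geq n+1$, $m_{(p+q,p)}^\delta\geq 2(q+1)$ and $m_{(p+q,p,1)}^\delta\geq q+1$ are supplied by Lemmas~\ref{LemMoltRiga delta},~\ref{LemMolt2Righe delta} and~\ref{LemMolt3Righe} respectively; what is left is to prove the matching upper bounds and to show that $m_\lambda^\delta=0$ for all other shapes $\lambda$. The natural way to do this is to use the explicit spanning set for $P_n^L$ modulo $P_n^L\cap\Id^L(UT_2^\delta)$ obtained in the proof of Lemma~\ref{Lemmma generatori UT^d}, namely the polynomials in \eqref{EqBaseNoIdentityAllDerivations}, which form a \emph{basis}. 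Counting them gives $c_n^L(UT_2^\delta)$ directly, which is part (2): the four families contribute $1$, $\binom{n}{m}$-type sums over the admissible $m$, $n$, and the $n$ choices for $x_r^\delta$; summing over $m$ reproduces $2^{n-1}n+1$ exactly as in the ordinary $UT_2$ count of Theorem~\ref{ThmIdCnOrdinaryUT2} plus the extra $n$ coming from the $x_{h_1}\cdots x_{h_{n-1}}x_r^\delta$ terms. Alternatively, one can cite Theorem~\ref{UT2e}(2) together with the observation that $c_n^L(UT_2^\delta)=c_n^L(UT_2^\varepsilon)$ will follow once the cocharacters are shown to agree, but computing the codimension from the basis is cleaner and self-contained.

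For the cocharacter, I would argue as follows. Since $c_n^L(UT_2^\delta)=2^{n-1}n+1$ and the same equality holds for $UT_2^\varepsilon$ by Theorem~\ref{UT2e}, and since the claimed multiplicities for $UT_2^\delta$ coincide term-by-term with $m_\lambda^\varepsilon$ from Theorem~\ref{UT2e}(3), it suffices to check that $\sum_{\lambda\vdash n}m_\lambda^\delta\deg\chi_\lambda=2^{n-1}n+1$ once we feed in the lower bounds from the lemmas. Concretely: the contribution of $(n)$ is $n+1$; the contributions of the shapes $(p+q,p)$ and $(p+q,p,1)$ with the lower-bound multiplicities $2(q+1)$ and $q+1$ sum (using $\deg\chi_{(p+q,p)}$ and $\deg\chi_{(p+q,p,1)}$ via the hook length formula, exactly the computation carried out for $UT_2^\varepsilon$ in \cite{GiambrunoRizzo2018}) to $2^{n-1}n+1-(n+1)$. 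Hence the lower bounds are already tight, equality holds everywhere, and all multiplicities attached to shapes with more than three rows, or to three-row shapes other than $(p+q,p,1)$, must vanish. This ``dimension count forces equality'' trick is the same mechanism used in Theorems~\ref{Thm U^e} and~\ref{thmM^e} above and avoids any delicate Young-symmetrizer bookkeeping.

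The one place that needs genuine care — and the step I expect to be the main obstacle — is verifying that the element count of \eqref{EqBaseNoIdentityAllDerivations} really is $2^{n-1}n+1$ and, simultaneously, that these elements are $S_n$-module generators matching the claimed cocharacter, i.e.\ that no shape outside $\{(n),(p+q,p),(p+q,p,1)\}$ can appear. The cleanest route is: the span of the first two families (those with $\gamma=1$) is precisely $P_n$ modulo $P_n\cap\Id(UT_2)$ by Theorem~\ref{ThmIdCnOrdinaryUT2}, contributing cocharacter $\chi_n(UT_2)$ with multiplicities $1$ on $(n)$ and $q+1$ on $(p+q,p)$ and $(p+q,p,1)$; the remaining two families ($x_{h_1}\cdots x_{h_{n-1}}x_r^\delta$ and the $\delta$-commutator terms) span a complementary $S_n$-submodule whose character, by direct inspection of how $S_n$ permutes the ``distinguished'' index carrying $\delta$, is $n\chi_{(n)}+\sum(q+1)\chi_{(p+q,p)}$ — note the extra factor accounting for the $n$ positions/indices of $x^\delta$ and matching the jump from $q+1$ to $2(q+1)$ on two-row shapes. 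Adding the two gives exactly the asserted decomposition. I would present this additive decomposition explicitly, then invoke the lower-bound lemmas only to confirm tightness, and close by noting that part (2) is the evaluation of $\sum m_\lambda^\delta\deg\chi_\lambda$ at the decomposition just obtained.
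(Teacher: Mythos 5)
Your proposal is correct and follows essentially the same route as the paper: part (1) is Lemma~\ref{Lemmma generatori UT^d}, part (2) comes from counting the basis \eqref{EqBaseNoIdentityAllDerivations}, and part (3) combines the lower bounds of Lemmas~\ref{LemMoltRiga delta}, \ref{LemMolt2Righe delta}, \ref{LemMolt3Righe} with the identity $\sum_\lambda m_\lambda^\delta\deg\chi_\lambda=c_n^L(UT_2^\delta)$ to force equality and kill all other shapes, exactly as in the cited proof of \cite[Theorem 12]{GiambrunoRizzo2018}. The alternative $S_n$-module decomposition you sketch is not needed and is not what the paper does, but the main argument matches.
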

\begin{proof}
By Lemma \ref{Lemmma generatori UT^d} the $T_L$-ideal of differential identities of $UT_2^\delta$ is generated by the polynomials $[x,y][z,w],[x,y]^{\delta},x^\delta [y,z], x^{\delta}y^{\delta}, x^{\delta^{2}}$ and the elements in \eqref{EqBaseNoIdentityAllDerivations} are a basis of $P_{n}^{L}$ modulo $P_{n}^{L}\cap \Id^{L}(UT_{2}^\delta)$. Thus by counting these elements we get that $c_n^{L}(UT_2^\delta)=2^{n-1}n+1$.

Finally, as a consequence of Lemmas \ref{LemMoltRiga delta}, \ref{LemMolt2Righe delta}, \ref{LemMolt3Righe} and by following verbatim the proof of \cite[Theorem 12]{GiambrunoRizzo2018} we get the decomposition into irreducible characters of $\chi_{n}^{L}(UT_{2}^\delta)$.
\end{proof}


Notice that $\var^L(UT_2^\delta)$ has exponential growth, nevertheless it has no almost polynomial growth. In fact, the algebra $UT_2$ (ordinary case) is an algebra with $F\delta$-action where $\delta$ acts trivially on $UT_2$, i.e., $x^{\delta}\equiv 0$ is differential identity of $UT_2$. Then it follows that $UT_2\in \var^L(UT_2^\delta)$, but $\var^L(UT_2)$ growths exponentially. Thus we have the following result.

\begin{thm}
\label{Thm: $UT_2^delta$}
$\var^L(UT_2^\delta)$ has no almost polynomial growth.
\end{thm}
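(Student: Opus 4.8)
The plan is to exhibit a proper $L$-subvariety of $\var^L(UT_2^\delta)$ that still has exponential growth; this immediately contradicts the definition of almost polynomial growth, which requires every proper $L$-subvariety to have polynomial growth. The natural candidate is $\var^L(UT_2)$, where $UT_2$ carries the trivial $F\delta$-action (so $x^\delta \equiv 0$ on it).

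First I would observe that $UT_2$, viewed as an $F\delta$-algebra with trivial action, lies in $\var^L(UT_2^\delta)$: its $T_L$-ideal of differential identities contains $\Id^L(UT_2^\delta)$, since every generator of $\Id^L(UT_2^\delta)$ listed in the preceding theorem, namely $[x,y][z,w]$, $[x,y]^\delta$, $x^\delta[y,z]$, $x^\delta y^\delta$, $x^{\delta^2}$, vanishes on $UT_2$ with trivial $\delta$-action — indeed $[x,y][z,w]\equiv 0$ is the ordinary identity of $UT_2$ from Theorem \ref{ThmIdCnOrdinaryUT2}, and the remaining four all contain a factor $x^\delta$, hence vanish when $\delta$ acts as zero. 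Therefore $\Id^L(UT_2^\delta)\subseteq \Id^L(UT_2)$, which is exactly the statement $UT_2\in\var^L(UT_2^\delta)$, and hence $\var^L(UT_2)\subseteq \var^L(UT_2^\delta)$.

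Next I would check that this inclusion of $L$-varieties is proper. This is clear because $x^\delta\equiv 0$ is a differential identity of $UT_2$ (trivial action) but not of $UT_2^\delta$, since $\delta$ is the nonzero inner derivation induced by $2^{-1}e_{12}$; concretely $\delta(e_{11})=2^{-1}e_{12}\neq 0$. So $\var^L(UT_2)$ is a proper $L$-subvariety of $\var^L(UT_2^\delta)$.

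Finally, I would invoke Theorem \ref{Thm:almost polynomial growth $UT_2$}, or more directly the codimension formula $c_n^L(UT_2)=2^{n-1}(n-2)+2$ from Theorem \ref{ThmIdCnOrdinaryUT2}, to conclude that $\var^L(UT_2)$ has exponential growth: $\lim_{n\to\infty}\sqrt[n]{c_n^L(UT_2)}=2$. Thus $\var^L(UT_2^\delta)$ has a proper $L$-subvariety of exponential (hence non-polynomial) growth, so by definition $\var^L(UT_2^\delta)$ does not have almost polynomial growth. I do not anticipate any real obstacle here; the only point requiring a small care is the correct direction of the containment $\Id^L(UT_2^\delta)\subseteq\Id^L(UT_2)$ versus the containment of varieties, but this is routine. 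The whole argument is essentially the remark already made in the paragraph preceding the theorem statement, organized into a short formal proof.
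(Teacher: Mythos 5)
Your argument is correct and is essentially the paper's own proof: the paragraph preceding the theorem notes that $UT_2$ with trivial $\delta$-action lies in $\var^L(UT_2^\delta)$ while $\var^L(UT_2)$ grows exponentially. Your write-up just makes explicit the verification of the containment of $T_L$-ideals and the properness of the subvariety (note only the harmless sign slip $\delta(e_{11})=-2^{-1}e_{12}$), so nothing further is needed.
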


\

Since any derivation of $ UT_2 $ is inner (see \cite{CoelhoPolcinoMilies1993}), it can be easily checked that the algebra $\Der(UT_2)$ of all derivations of $UT_2$ is the 2-dimensional metabelian Lie algebra with basis $ \{\varepsilon,\delta\} $. Thus let suppose that $L$ is a $2$-dimensional metabelian Lie algebra and let denote by $ UT_2^D$ the $L$-algebra $UT_2$ where $L$ acts on it as the Lie algebra $\Der(UT_2)$. Giambruno and Rizzo  in \cite{GiambrunoRizzo2018} proved the following result.

\begin{thm}{\cite[Theorems 19 and 25]{GiambrunoRizzo2018}}
\label{ThmIdCnAllDerivations}
\begin{enumerate}
\item $\Id^{L}(UT_{2}^D)=\langle  [x,y]^{\varepsilon}-[x,y], x^{\varepsilon}y^{\varepsilon}, x^{\varepsilon^{2}}-x^{\varepsilon}, x^{\delta\varepsilon }, x^{\varepsilon\delta}- x^{\delta}\rangle_{T_{L}}$.

\vspace{1mm}
\item $c_{n}^{L}(UT_{2}^D)=2^{n-1}(n+2).$

\vspace{1mm}
		\item If $\chi_{n}^{L}(UT_{2}^D)=\sum_{\lambda\vdash n} m_{\lambda}^D \chi_{\lambda}$ is the $n$th differential cocharacter of $UT_{2}^{D}$, then
		$$m_{\lambda}^{D}=\begin{cases}
		2n+1, & \mbox{ if } \lambda=(n) \\
		3(q+1), & \mbox{ if } \lambda=(p+q,p) \\
		q+1, & \mbox{ if } \lambda=(p+q,p,1) \\
		0 & \mbox{ in all other cases}
		\end{cases}.$$
\end{enumerate}
\end{thm}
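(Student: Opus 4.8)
The plan is to follow, mutatis mutandis, the three-step strategy already used for $UT_2^\delta$ in Lemma \ref{Lemmma generatori UT^d} and Lemmas \ref{LemMoltRiga delta}--\ref{LemMolt3Righe}: first pin down a spanning set of $P_n^L$ modulo the candidate $T_L$-ideal $Q=\langle [x,y]^{\varepsilon}-[x,y], x^{\varepsilon}y^{\varepsilon}, x^{\varepsilon^{2}}-x^{\varepsilon}, x^{\delta\varepsilon }, x^{\varepsilon\delta}- x^{\delta}\rangle_{T_{L}}$; then show this set is linearly independent modulo $\Id^L(UT_2^D)$ by explicit evaluations on $UT_2$, which simultaneously yields $\Id^L(UT_2^D)=Q$ and the value of the codimension; and finally extract the cocharacter from essential idempotents. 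Here $L$ is the $2$-dimensional non-abelian Lie algebra with $[\varepsilon,\delta]=\delta$, so in $U(L)$ one has the single structural relation $\varepsilon\delta=\delta\varepsilon+\delta$.

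First I would record that $Q\subseteq\Id^L(UT_2^D)$. Since $\varepsilon$ kills $e_{11},e_{22}$ and fixes $e_{12}$ while $\delta(e_{11})=-2^{-1}e_{12}$, $\delta(e_{22})=2^{-1}e_{12}$, $\delta(e_{12})=0$, each of the five generators vanishes on $UT_2$ by a one-line check; in particular $\varepsilon$ acts as a projection onto $[UT_2,UT_2]=Fe_{12}$, which accounts for $x^{\varepsilon^2}-x^{\varepsilon}$, $x^{\varepsilon}y^{\varepsilon}$ and $[x,y]^{\varepsilon}-[x,y]$, and the two remaining relations encode that $\varepsilon$ fixes the image of $\delta$ while $\delta\circ\varepsilon=0$. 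Next I would derive from $Q$ the auxiliary consequences $x^{\delta^2}\equiv 0$, $x^{\delta}y^{\delta}\equiv 0$, $x^{\delta}[y,z]\equiv 0$, $[x,y][z,w]\equiv 0$ and the reordering of variables in left-normed commutators — essentially the generators of $\Id^L(UT_2^\delta)$ together with the $UT_2^\varepsilon$-consequences of Theorem \ref{UT2e}; for example $x^{\delta^2}\equiv 0$ follows by substituting $x\mapsto x^{\delta}$ into $x^{\delta\varepsilon}$ and using $\varepsilon\delta=\delta\varepsilon+\delta$, and $x^{\delta}y^{\delta}=(x^{\delta})^{\varepsilon}(y^{\delta})^{\varepsilon}\equiv 0$ by substituting into $x^{\varepsilon}y^{\varepsilon}$ since $x^{\delta}\equiv x^{\varepsilon\delta}$. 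Invoking the Poincar\'e--Birkhoff--Witt theorem as in \eqref{eqP-B-W}, these relations collapse every $U(L)$-exponent to one of $1,\varepsilon,\delta$, force at most one left-normed commutator, and permit the reorderings of Remark \ref{rmkConseguenceAllDerivations2} and Theorem \ref{ThmIdCnOrdinaryUT2}; the outcome is a spanning set of $P_n^L$ modulo $P_n^L\cap Q$ obtained from \eqref{EqBaseNoIdentityAllDerivations} by adjoining the $\varepsilon$-analogues of the $\delta$-terms, and I expect a direct count of it to give exactly $2^{n-1}(n+2)$ elements.

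Then I would prove that this set is linearly independent modulo $\Id^L(UT_2^D)$, exactly in the style of Lemma \ref{Lemmma generatori UT^d}: setting a linear combination to zero and killing the coefficients one at a time by evaluating the variables at $e_{11}+e_{22}$, $e_{22}$, $e_{11}$, $e_{12}$ and $e_{11}+e_{22}+e_{12}$; the one genuinely new point is that $x^{\varepsilon}$-type and $x^{\delta}$-type monomials must be separated, which is achieved by comparing an evaluation at $e_{22}$ (or $e_{11}$) with one at $e_{11}+e_{22}+e_{12}$, using that $\varepsilon$ and $\delta$ act differently on the diagonal idempotents. This gives $\Id^L(UT_2^D)=Q$ and $c_n^L(UT_2^D)=2^{n-1}(n+2)$, proving (1) and (2).

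Finally, for the cocharacter I would exhibit, for each admissible $\lambda$, enough polynomials built from essential idempotents that remain independent modulo $\Id^L(UT_2^D)$, just as in Lemmas \ref{LemMoltRiga delta}--\ref{LemMolt3Righe}: for $\lambda=(n)$ the $2n+1$ monomials $x^n$, $x^{k-1}x^{\varepsilon}x^{n-k}$ and $x^{k-1}x^{\delta}x^{n-k}$, separated by Vandermonde arguments on $x=\beta e_{11}+e_{22}$ and $x=\beta e_{11}+e_{12}+e_{22}$; for $\lambda=(p+q,p)$ the three families $b_i^{(p,q)}$ of \eqref{b^p,q}, its $\varepsilon$-version coming from Theorem \ref{UT2e}, and $b_i^{(p,q,\delta)}$ of \eqref{b^p,q,epsilon}, yielding $3(q+1)$; and for $\lambda=(p+q,p,1)$ the bound $m_\lambda^D\geq q+1$ directly from Remark \ref{Rmk Codimensions and multiplicity} and Theorem \ref{ThmIdCnOrdinaryUT2}. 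Since $\sum_{\lambda}m_\lambda^D\deg\chi_\lambda\le c_n^L(UT_2^D)=2^{n-1}(n+2)$, which is precisely what the claimed multiplicities sum to, these lower bounds are forced to be equalities and all other $m_\lambda^D$ vanish, giving (3). The main obstacle is the reduction in Step 2: establishing the correct list of consequences of the five relations inside the noncommutative $U(L)$ and tallying the resulting spanning set to the exact value $2^{n-1}(n+2)$; once the basis is correctly identified, the independence and cocharacter arguments are the routine evaluations and Vandermonde manipulations sketched above.
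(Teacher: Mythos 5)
Your reconstruction is essentially correct, but note that the paper does not actually prove this theorem: it is quoted from \cite[Theorems 19 and 25]{GiambrunoRizzo2018}, and your argument follows precisely the template used there and in Lemma \ref{Lemmma generatori UT^d} and Lemmas \ref{LemMoltRiga delta}--\ref{LemMolt3Righe} of this paper (the spanning set you describe does count to $2^{n-1}(n+2)$, and the identity $\sum_\lambda m_\lambda^D\deg\chi_\lambda=c_n^L(UT_2^D)$ does close the cocharacter argument as you claim). One step to tighten: substituting $x\mapsto x^{\delta}$ into $x^{\delta\varepsilon}$ gives $x^{\delta\varepsilon\delta}=x^{\delta^2\varepsilon}+x^{\delta^2}$ (using $\varepsilon\delta=\delta\varepsilon+\delta$), so to conclude $x^{\delta^2}\equiv 0$ you also need $x^{\delta^2\varepsilon}\in Q$, which follows by applying $\delta\in L$ to the generator $x^{\delta\varepsilon}$, since the $T_L$-ideal is $U(L)$-invariant.
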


Since $x^\delta\equiv 0$ is a differential identity of $UT_2 ^\varepsilon$, $\var^L(UT_2 ^{\varepsilon}) \subseteq \var^L (UT_2^D)$. Then by Theorem \ref{UT2e}, we have the following.

\begin{thm}{\cite[Theorem 26]{GiambrunoRizzo2018}}
\label{Thm: $UT_2^D$}
$\var^L(UT_2^D)$ has no almost polynomial growth.
\end{thm}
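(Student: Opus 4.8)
The plan is to exhibit a proper $L$-subvariety of $\var^L(UT_2^D)$ whose differential codimensions are not polynomially bounded; once such a subvariety is produced, the defining property of almost polynomial growth fails for $\var^L(UT_2^D)$.

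First I would regard $UT_2^\varepsilon$ as an $L$-algebra for $L$ the $2$-dimensional metabelian Lie algebra with basis $\{\varepsilon,\delta\}$, letting $\varepsilon$ act as the inner derivation induced by $2^{-1}(e_{11}-e_{22})$ and letting $\delta$ act trivially, so that $x^\delta\equiv 0$ on $UT_2^\varepsilon$. Next I would check that every generator of the $T_L$-ideal $\Id^L(UT_2^D)$ listed in Theorem \ref{ThmIdCnAllDerivations} vanishes on this algebra: the identities $[x,y]^\varepsilon-[x,y]$, $x^\varepsilon y^\varepsilon$ and $x^{\varepsilon^2}-x^\varepsilon$ hold by Theorem \ref{UT2e}, while $x^{\delta\varepsilon}$ and $x^{\varepsilon\delta}-x^\delta$ hold because on $UT_2^\varepsilon$ every element of $U(L)$ divisible by $\delta$ acts as the zero operator. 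Hence $\Id^L(UT_2^D)\subseteq\Id^L(UT_2^\varepsilon)$, i.e. $UT_2^\varepsilon\in\var^L(UT_2^D)$, and therefore $\var^L(UT_2^\varepsilon)\subseteq\var^L(UT_2^D)$.

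Then I would observe that this inclusion is strict: by Theorems \ref{UT2e} and \ref{ThmIdCnAllDerivations} we have $c_n^L(UT_2^\varepsilon)=2^{n-1}n+1\neq 2^{n-1}(n+2)=c_n^L(UT_2^D)$ for every $n\geq 1$, so the two $L$-varieties are distinct and $\var^L(UT_2^\varepsilon)$ is a proper $L$-subvariety of $\var^L(UT_2^D)$. By Theorem \ref{Thm:almost polynomial growth $UT_2^epsilon$} the variety $\var^L(UT_2^\varepsilon)$ has almost polynomial growth, so in particular its sequence of differential codimensions is not polynomially bounded. Consequently $\var^L(UT_2^D)$ possesses a proper $L$-subvariety of non-polynomial growth, which by definition means that $\var^L(UT_2^D)$ does not have almost polynomial growth.

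The only point requiring a little care — and the one I expect to be the main (minor) obstacle — is the verification that $x^{\delta\varepsilon}\equiv 0$ and $x^{\varepsilon\delta}-x^\delta\equiv 0$ on $UT_2^\varepsilon$: one must unwind the definition of the $U(L)$-action from Section \ref{sec:Preliminaries} on an $L$-algebra in which $\delta$, and hence every PBW monomial having $\delta$ as a factor, acts as zero. Everything else is an immediate consequence of the already established descriptions of $\Id^L(UT_2^\varepsilon)$ and $\Id^L(UT_2^D)$ together with Theorem \ref{Thm:almost polynomial growth $UT_2^epsilon$}.
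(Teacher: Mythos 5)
Your proposal is correct and follows essentially the same route as the paper: the paper likewise observes that $x^\delta\equiv 0$ holds on $UT_2^\varepsilon$, so that $\var^L(UT_2^\varepsilon)$ is a proper $L$-subvariety of $\var^L(UT_2^D)$ of exponential (hence non-polynomial) growth by Theorem \ref{UT2e}. Your extra verification of the generators $x^{\delta\varepsilon}$ and $x^{\varepsilon\delta}-x^\delta$ is a fine way to make the inclusion explicit, but it is the same argument.
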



\section{On differential identities of the Grassmann algebra}
\label{sec:G}

Let $L$ be a finite dimensional abelian Lie algebra and $G$ the infinite dimensional Grassmann algebra over $F$. Recall that $G$ is the algebra generated by 1 and a countable set of elements $e_{1},e_{2},\dots$ subjected to the condition $e_{i}e_{j}=-e_{j}e_{i}$, for all $i,j\geq 1$.

Notice that $G$ can be decomposed in a natural way as the direct sum of the subspaces
$$G_{0}=\Span_{F} \{e_{i_{1}}\dots e_{i_{2k}} \ \vert \ i_{1}<\dots < i_{2k},k\geq 0\}$$ 
and 
$$G_{1}=\Span_{F} \{e_{i_{1}}\dots e_{i_{2k+1}} \ \vert \ i_{1}< \dots  < i_{2k+1},k\geq 0\},$$
i.e., $G=G_{0}\oplus G_{1}$.

Let now consider the algebra $G$ where $L$ acts trivially on it. Since $x^\gamma \equiv 0$, for all $\gamma \in L$, is a differential identity of $G$, we are dealing with ordinary identities. Thus by \cite{KrakowskiRegev1973} we have the following results.

\begin{thm}
\label{ThmG}
\begin{enumerate}
\item $\Id^L(G)=\langle [x,y,z]\rangle_{T}$.
\vspace{1mm}
\item $c^L _{n}(G)=2^{n-1}.$
\vspace{1mm}

\item $\chi_{n}^L(G)=\sum_{j=1}^{n} \chi_{(j, 1^{n-j})}$.
\end{enumerate}
\end{thm}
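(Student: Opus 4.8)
Since $L$ acts trivially on $G$ we have $x^{\gamma}\equiv 0$ for every $\gamma\in L$, so $\Id^{L}(G)=\Id(G)$, $c^{L}_{n}(G)=c_{n}(G)$ and $\chi^{L}_{n}(G)=\chi_{n}(G)$, and the statement reduces to the classical description of the polynomial identities of the Grassmann algebra. First I would check that $[x,y,z]$ is a polynomial identity of $G$: from the decomposition $G=G_{0}\oplus G_{1}$ one sees that $G_{0}$ lies in the centre of $G$ and that $[G,G]\subseteq G_{0}$, whence $[[a,b],c]=0$ for all $a,b,c\in G$. Set $Q=\langle [x,y,z]\rangle_{T}\subseteq\Id(G)$; the plan is to prove that $c_{n}(G)=2^{n-1}$, that each hook character $\chi_{(j,1^{n-j})}$, $1\le j\le n$, occurs in $\chi_{n}(G)$ with multiplicity $1$, and that no other character occurs, and then deduce $\Id(G)=Q$ as a byproduct.

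The upper bound $c_{n}(G)\le 2^{n-1}$ comes from a normal form modulo $Q$. Since $[x,y,z]\equiv 0$, every commutator is central in $F\langle X|L\rangle/Q$, so products of commutators commute with everything; moreover, substituting $x_{1}x_{2}$, $x_{3}$, $x_{4}$ for $x,y,z$ in $[x,y,z]$ and expanding via the Leibniz rule, one obtains the consequence $[x_{1},x_{3}][x_{2},x_{4}]\equiv -[x_{1},x_{4}][x_{2},x_{3}]\pmod{Q}$. Together with $[x,y]=-[y,x]$ and the freedom to reorder the (central) commutator factors, this relation shows that any product of $s$ pairwise commutators on a fixed $2s$-element set $S$ of the variables is, modulo $Q$, a scalar multiple of one fixed such product $c_{S}$. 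Hence, after a Poincar\'e--Birkhoff--Witt-type collection that moves all commutators to the right of the remaining variables and orders the latter increasingly, $P_{n}$ is spanned modulo $P_{n}\cap Q$ by the monomials $x_{i_{1}}\cdots x_{i_{r}}\,c_{S}$, where $S$ runs over the even-size subsets of $\{1,\dots,n\}$ and $\{i_{1}<\dots<i_{r}\}$ is its complement. Counting gives $\sum_{s\ge 0}\binom{n}{2s}=2^{n-1}$ such monomials, so $c_{n}(G)\le 2^{n-1}$.

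Next I would analyse the cocharacter $\chi_{n}(G)=\sum_{\lambda\vdash n}m_{\lambda}\chi_{\lambda}$. If $\lambda$ is not a hook, its Young diagram contains a $2\times 2$ square, and one checks that the corresponding essential idempotent annihilates $P_{n}$ modulo $Q$ --- the double alternation it encodes collapses, in the presence of $[x,y,z]\equiv 0$, to a multiple of a triple commutator --- so $m_{\lambda}=0$. On the other hand, for each hook $\lambda=(j,1^{n-j})$ one exhibits an explicit multilinear polynomial $w_{j}$, obtained from the essential idempotent of a standard tableau of shape $\lambda$, which is alternating in $n-j+1$ of the variables with the remaining $j-1$ variables multiplied in ($w_{n}=x_{1}\cdots x_{n}$ and $w_{1}=\sum_{\sigma\in S_{n}}\sgn(\sigma)x_{\sigma(1)}\cdots x_{\sigma(n)}$ at the extremes), and one verifies that $w_{j}$ does not vanish on $G$ by substituting the generators $e_{1},\dots,e_{n}$, which produces a nonzero scalar multiple of $e_{1}\cdots e_{n}$. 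Hence $m_{(j,1^{n-j})}\ge 1$ for every $j=1,\dots,n$.

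Finally, since $\deg\chi_{(j,1^{n-j})}=\binom{n-1}{j-1}$, combining the last two steps gives $2^{n-1}\ge c_{n}(G)=\sum_{j=1}^{n}m_{(j,1^{n-j})}\binom{n-1}{j-1}\ge\sum_{j=1}^{n}\binom{n-1}{j-1}=2^{n-1}$, so equality holds throughout: $c_{n}(G)=2^{n-1}$, $m_{(j,1^{n-j})}=1$ for all $j$ and all other multiplicities vanish, whence $\chi_{n}(G)=\sum_{j=1}^{n}\chi_{(j,1^{n-j})}$. In particular the $2^{n-1}$ spanning monomials of the second paragraph are then a basis of $P_{n}$ modulo $P_{n}\cap\Id(G)$, so $P_{n}\cap\Id(G)=P_{n}\cap Q$ for every $n$; since $\Char F=0$ every identity is a consequence of its multilinear components, and therefore $\Id(G)=Q=\langle [x,y,z]\rangle_{T}$. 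The main obstacle is the structural work hidden in the second and third paragraphs: establishing the quadratic relation among products of two commutators and carrying out the collection to the claimed normal form, and showing that the essential idempotents of non-hook shape lie in $\Id(G)$. Both are classical (this is Krakowski--Regev's theorem) but delicate, requiring careful bookkeeping with the Young symmetrizers and the centrality of commutators modulo $Q$.
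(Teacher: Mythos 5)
Your reduction to the ordinary case via the trivial $L$-action is exactly what the paper does; the paper then simply cites Krakowski--Regev for the classical description of $\Id(G)$, $c_n(G)$ and $\chi_n(G)$, whereas you go on to sketch that classical proof, and your sketch (normal form modulo $\langle[x,y,z]\rangle_T$ giving $c_n(G)\le 2^{n-1}$, hook multiplicities $\ge 1$, and the dimension count $\sum_j\binom{n-1}{j-1}=2^{n-1}$ forcing equality) is the standard correct argument. The one loose point is the non-vanishing of the hook highest-weight vectors: substituting all odd generators $e_1,\dots,e_n$ annihilates any polynomial symmetrized over two or more variables, so for $2\le j\le n-1$ one should substitute odd generators only for the alternating (column) set and even elements (e.g.\ $1$ or products $e_ae_b$) for the row variables --- a standard and easily repaired detail.
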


\begin{thm}
$\var^L(G)$ has almost polynomial growth.
\end{thm}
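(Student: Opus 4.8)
The plan is to check directly the two conditions in the definition of almost polynomial growth. For the first condition, Theorem~\ref{ThmG} already records that $c_n^L(G)=2^{n-1}$, so the sequence of differential codimensions grows exponentially and $\var^L(G)$ is certainly not polynomially bounded.

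For the second condition the key preliminary observation is that, since $L$ acts trivially on $G$, the $T_L$-ideal $\Id^L(G)$ contains $x^\gamma$ for every $\gamma\in L$, and hence $x^u$ for every $u$ in the augmentation ideal of $U(L)$ (the action of such a $u$ factors through a composition of zero maps). Writing a basis element $h\in\mathcal B$ as $h=\alpha\cdot 1+h'$ with $\alpha\in F$ and $h'$ in the augmentation ideal, one gets $x_j^{h}\equiv\alpha x_j\pmod{\Id^L(G)}$; thus modulo $\Id^L(G)$ every differential polynomial is equivalent to an ordinary one. Combining this with $P_n\cap\Id(G)=P_n\cap\Id^L(G)$ (noted in Section~\ref{sec:Preliminaries}), I would deduce that the inclusion $F\langle X\rangle\hookrightarrow F\langle X|L\rangle$ induces an isomorphism of relatively free algebras $F\langle X\rangle/\Id(G)\cong F\langle X|L\rangle/\Id^L(G)$ which, degree by degree, identifies $P_n/(P_n\cap\Id(G))$ with $P_n^L/(P_n^L\cap\Id^L(G))$.

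Next I would transfer the problem to the ordinary setting. Let $\mathcal W\subsetneq\var^L(G)$ be a proper $L$-subvariety, so that $I:=\Id^L(\mathcal W)$ strictly contains $\Id^L(G)$. Taking $f\in I\setminus\Id^L(G)$ and replacing it by its ordinary reduction $\bar f$ (which lies in $I$ because $f-\bar f\in\Id^L(G)\subseteq I$, and which is not in $\Id^L(G)$), we see that $J:=I\cap F\langle X\rangle$ strictly contains $\Id(G)$; hence $J$ is the $T$-ideal of identities of a proper subvariety $\mathcal W'$ of $\var(G)$. A short check shows $J+\Id^L(G)=I$ and, in each degree, $(P_n\cap J)+(P_n^L\cap\Id^L(G))=P_n^L\cap I$, so the isomorphism above descends to $F\langle X\rangle/J\cong F\langle X|L\rangle/I$ and yields $c_n^L(\mathcal W)=c_n(\mathcal W')$ for all $n$. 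By Kemer's theorem \cite{Kemer1979} the ordinary variety $\var(G)$ has almost polynomial growth, so its proper subvariety $\mathcal W'$ has polynomially bounded codimension sequence; therefore so does $\mathcal W$.

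Putting the two parts together, $\var^L(G)$ has exponential growth while each of its proper $L$-subvarieties has polynomial growth, which is exactly almost polynomial growth. The only point requiring care is the reduction modulo $\Id^L(G)$ together with the resulting identification of the differential relatively free algebra of $G$ with its ordinary counterpart (and the bookkeeping that this identification respects the lattice of $T_L$-ideals above $\Id^L(G)$ and the multilinear components); once that identification is in place the statement follows formally from the classical result for $G$.
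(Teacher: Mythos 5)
Your proof is correct and follows the same route the paper takes implicitly: since $L$ acts trivially, every differential polynomial reduces modulo $\Id^L(G)$ to an ordinary one, so the lattice of $T_L$-ideals above $\Id^L(G)$ and the codimension sequences coincide with those of the ordinary $T$-ideals above $\Id(G)$, and the statement follows from Kemer's theorem that $\var(G)$ has almost polynomial growth. The paper states this without proof (``we are dealing with ordinary identities''), and your write-up simply supplies the bookkeeping that justifies that reduction.
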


Recall that if $g=e_{i_{1}}\dots e_{i_{n}}\in G$, the set $\Supp\{g\}=\{e_{i_{1}},\dots,e_{i_{n}}\}$ is called the support of $g$. Let now  $g_{1},\dots,g_{t}\in G_{1}$ be such that $\Supp\{g_{i}\}\cap \Supp\{g_{j}\}=\emptyset$, for all $i,j\in\{1,\dots,t\}$. We set
$$\delta_{i}=2^{-1}\ad g_{i},\quad  i=1,\dots,t.$$
Then for all $g\in G$ we have
$$\delta_{i}(g)=\begin{cases} 
0, &\mbox{ if } g\in G_{0} \\
g_{i}g , &\mbox{ if } g\in G_{1}
\end{cases}, \quad i=1,\dots,t.$$
Since for all $g\in G$, $[\delta_{i},\delta_{j}](g)=0$, $i,j\in\{1,\dots,t\}$, $L=\Span_{F}\{\delta_{1},\dots,\delta_{t}\}$ is a $t$-dimensional abelian Lie algebra of inner derivations of $G$. We shall denote by $\widetilde{G}$ the algebra $G$ with this $L$-action.

Recall that for a real number $x$ we denote by $\lfloor x \rfloor$ its integer part.

\begin{thm}{\cite[Theorems 3 and 9]{Rizzo2018}}
\label{ThmIdCn}
\begin{enumerate}
\item $\Id^{L}(\widetilde{G})=\langle [x,y,z], [x^{\delta_i}, y], x^{\delta_i \delta_j} \rangle_{T_{L}}$, $i,j=1,\dots,t$.
\vspace{1mm}

\item $c_{n}^{L}(\widetilde{G})=2^{t}2^{n-1}-\sum_{j=1}^{\lfloor t/2\rfloor}\sum_{i=2j}^{t}\binom{t}{i}\binom{n}{i-2j}.$
\vspace{1mm}

\item If $\chi_{n}^{L}(\widetilde{G})=\sum_{\lambda\vdash n} m_{\lambda}^{L}\chi_{\lambda}$ is the $n$th differential cocharacter of $\widetilde{G}$, then
$$m_{\lambda}^L=\begin{cases}
		\sum_{i=0}^{r}\binom{t}{i}, & \mbox{ if } \lambda=(n-r+1,1^{r-1}) \mbox{ and }  r<t \\
		2^{t}, & \mbox{ if } \lambda=(n-r+1,1^{r-1}) \mbox{ and }  r\geq t\\
		0 & \mbox{ in all other cases}
		\end{cases}.$$
\end{enumerate}
\end{thm}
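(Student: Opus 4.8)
The plan is to follow the now-standard three-step scheme used in the proof of Lemma \ref{Lemmma generatori UT^d} and Lemmas \ref{LemMoltRiga delta}--\ref{LemMolt3Righe}: first check that the listed polynomials are differential identities of $\widetilde G$; then, writing $Q=\langle [x,y,z],[x^{\delta_i},y],x^{\delta_i\delta_j}\rangle_{T_L}$ for the $T_L$-ideal they generate, produce an explicit spanning set of $P_n^L$ modulo $P_n^L\cap Q$; and finally prove that this spanning set is linearly independent modulo $\Id^L(\widetilde G)$ by specializing the variables inside $\widetilde G$. The last step simultaneously gives $\Id^L(\widetilde G)=Q$ (part (1)) and, upon counting the spanning set, the value of $c_n^L(\widetilde G)$ (part (2)); the cocharacter is then obtained by exhibiting the right highest weight vectors (part (3)). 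The argument we sketch is essentially that of \cite{Rizzo2018}.

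For step one the inclusion $Q\subseteq\Id^L(\widetilde G)$ is immediate: $[x,y,z]\equiv 0$ already holds on $G$; for $a=a_0+a_1\in G$ (with $a_k\in G_k$) one has $\delta_i(a)=g_ia_1\in G_0$, so $\delta_i(a)$ is central in $G$, giving $[x^{\delta_i},y]\equiv 0$; and $G_0$ is annihilated by every $\delta_j$, so $x^{\delta_i\delta_j}\equiv 0$ (in particular $x^{\delta_i^2}\equiv 0$). For step two, since $L$ is abelian, $U(L)$ is the polynomial algebra $F[\delta_1,\dots,\delta_t]$; arguing as in Lemma \ref{Lemmma generatori UT^d} via the Poincar\'{e}--Birkhoff--Witt theorem \cite{Procesi2006} and reducing modulo $x^{\delta_i\delta_j}$, each variable occurs in a multilinear monomial either as $x_k$ or as $x_k^{\delta_i}$, and the factors $x_k^{\delta_i}$ are central because $[x^{\delta_i},y]\in Q$; using $[x,y,z]\in Q$ together with the Krakowski--Regev description of $\Id(G)$ (Theorem \ref{ThmG}) the part in the ordinary variables reduces to an increasing prefix times a single totally alternating product of commutators. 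The decisive extra relations, all derivable inside $Q$ by expanding $U(L)$-operators on a product of two variables, are $x^{\delta_i}y^{\delta_i}\equiv 0$ (from $\delta_i^2(xy)$ and $\Char F=0$), $x^{\delta_i}y^{\delta_j}+x^{\delta_j}y^{\delta_i}\equiv 0$ for $i\neq j$ (from $\delta_i\delta_j(xy)$), and $[x,y]^{\delta_i}\equiv 0$; together these force the derivative factors and the commutator factors to behave like Grassmann-odd elements tagged by the $g_i$'s, and one is left with a spanning set of $P_n^L$ indexed by pairs $(T,C)$, where $T\subseteq\{1,\dots,t\}$ records which derivations occur, $C\subseteq\{1,\dots,n\}$ records the variables that become ``forced odd'' (those carrying a derivation, together with those inside the alternating commutator), subject to $|C|\ge|T|$ and $|C|\equiv|T|\pmod 2$, the remaining variables forming the increasing prefix.

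For step three I would separate the $(T,C)$-monomials exactly as in Lemmas \ref{LemMoltRiga delta} and \ref{LemMolt2Righe delta}: send the variables not in $C$ to $1$ or to suitable scalar multiples of idempotent-type central elements, send a variable of $C$ carrying $\delta_i$ to a generator $e_{j}$ whose $\delta_i$-image is the fresh even monomial $g_ie_{j}$, and send the remaining variables of $C$ to fresh generators $e_{j}$; since the supports of the $g_i$'s are pairwise disjoint and $G$ is infinite dimensional, all resulting products are linearly independent in $\widetilde G$, and, after inserting scalar parameters, the separation reduces to the non-vanishing of Vandermonde determinants. This yields $\Id^L(\widetilde G)=Q$, and counting the pairs $(T,C)$ gives $c_n^L(\widetilde G)=\sum_{s=0}^{\min(n,t)}\binom ts\sum_{k\ge0}\binom n{s+2k}$, which an elementary binomial identity rewrites as $2^t2^{n-1}-\sum_{j=1}^{\lfloor t/2\rfloor}\sum_{i=2j}^t\binom ti\binom n{i-2j}$.

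For part (3), Theorem \ref{ThmG} and Remark \ref{Rmk Codimensions and multiplicity} show that only hook partitions can occur in $\chi_n^L(\widetilde G)$: every monomial of the spanning set generates, as an $S_n$-module, a sum of hooks since its forced-odd block carries a single alternation, exactly as for $G$, so any highest weight vector of non-hook shape lies in $\Id^L(\widetilde G)$. To compute $m_{(n-r+1,1^{r-1})}^L$, start from the unique Grassmann highest weight vector of that shape (alternation on $r$ variables, the remaining $n-r$ symmetrized) and attach $i$ distinct derivations $\delta_{j_1},\dots,\delta_{j_i}$ to $i$ of the $r$ alternated variables; one checks that, up to a scalar, the resulting highest weight vector depends only on the chosen $i$-subset of $\{1,\dots,t\}$, so one obtains $\sum_{i=0}^{\min(r,t)}\binom ti$ candidates, the evaluation technique of step three shows they are independent modulo $\Id^L(\widetilde G)$, and the codimension formula of part (2) together with $c_n^L(\widetilde G)=\sum_{r=1}^n m_r^L\binom{n-1}{r-1}$ shows there are no others. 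This gives $m_\lambda^L=\sum_{i=0}^{r}\binom ti$ when $r<t$ and $m_\lambda^L=2^t$ when $r\ge t$, as claimed. The main obstacle throughout is the bookkeeping of steps two and three — pinning down exactly the right antisymmetry among the derivative and commutator factors so that the $(T,C)$-family is at once spanning and independent — together with verifying the two binomial identities matching the $(T,C)$-count with both the closed form and $\sum_{r}m_r^L\binom{n-1}{r-1}$.
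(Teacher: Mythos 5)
The paper does not actually prove this theorem --- it is quoted from \cite[Theorems 3 and 9]{Rizzo2018} --- but your reconstruction follows precisely the scheme of that reference and of the analogous results proved in this paper (Lemma \ref{Lemmma generatori UT^d}, Lemmas \ref{LemMoltRiga delta}--\ref{LemMolt3Righe}), and it is sound: your listed consequences of $Q$, together with the cross-relation $z^{\delta_i}[x,y]+y^{\delta_i}[x,z]\in Q$ (obtained from $[x,y]^{\delta_i}\in Q$ by substituting $yz$ for $y$ and using centrality of $z^{\delta_i}$), do collapse the spanning set to the $(T,C)$-indexed family, and your count $\sum_{s}\binom{t}{s}\sum_{k\ge 0}\binom{n}{s+2k}$ correctly rewrites as $2^{t}2^{n-1}-\sum_{j=1}^{\lfloor t/2\rfloor}\sum_{i=2j}^{t}\binom{t}{i}\binom{n}{i-2j}$. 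I see no gap worth flagging.
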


Recall that two functions $\varphi_{1}(n)$ and $\varphi_{2}(n)$ are asymptotically equal and we write $\varphi_1(n)\approx\varphi_2(n)$ if $\lim_{n\to\infty}\varphi_1(n)/ \varphi_2(n)=1$. Then the following corollary is an obvious consequence of the previous theorem.
\begin{cor}
\label{Cor approximation c^L_n}
$c_{n}^{L}(\widetilde{G})\approx 2^{t}2^{n-1}$.
\end{cor}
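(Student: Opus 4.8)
The plan is to deduce the asymptotic estimate directly from the exact formula for $c_{n}^{L}(\widetilde{G})$ given in Theorem \ref{ThmIdCn}(2). Write
\begin{equation*}
c_{n}^{L}(\widetilde{G})=2^{t}2^{n-1}-R(n),\qquad R(n)=\sum_{j=1}^{\lfloor t/2\rfloor}\sum_{i=2j}^{t}\binom{t}{i}\binom{n}{i-2j},
\end{equation*}
so that $c_{n}^{L}(\widetilde{G})/(2^{t}2^{n-1})=1-R(n)/(2^{t}2^{n-1})$, and it suffices to show $R(n)/(2^{t}2^{n-1})\to 0$ as $n\to\infty$. Since $t$ is fixed, $R(n)$ is a finite sum whose terms are, up to the constant factors $\binom{t}{i}\le 2^{t}$, binomial coefficients $\binom{n}{i-2j}$ with $0\le i-2j\le t$; each such $\binom{n}{k}$ is a polynomial in $n$ of degree $k\le t$. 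Hence $R(n)$ is bounded above by a polynomial in $n$ of degree at most $t$, i.e. $R(n)=O(n^{t})$.

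First I would make the degree bound precise: the number of pairs $(i,j)$ in the double sum is at most $(\lfloor t/2\rfloor)(t+1)$, a constant depending only on $t$, and for each pair $\binom{t}{i}\binom{n}{i-2j}\le 2^{t}\binom{n}{t}\le 2^{t}n^{t}$ for $n$ large. Therefore $0\le R(n)\le C n^{t}$ for a constant $C=C(t)$ and all sufficiently large $n$. Dividing by $2^{t}2^{n-1}$ gives
\begin{equation*}
0\le \frac{R(n)}{2^{t}2^{n-1}}\le \frac{C n^{t}}{2^{t}2^{n-1}}\xrightarrow[n\to\infty]{}0,
\end{equation*}
because exponential growth dominates polynomial growth. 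Consequently $\lim_{n\to\infty}c_{n}^{L}(\widetilde{G})/(2^{t}2^{n-1})=1$, which is exactly the assertion $c_{n}^{L}(\widetilde{G})\approx 2^{t}2^{n-1}$.

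There is essentially no obstacle here; the only mild point to be careful about is that $t$ is a fixed parameter of the construction of $\widetilde{G}$, so all the constants and the polynomial degree are allowed to depend on $t$ but not on $n$ — the limit is taken in $n$ alone. One could alternatively phrase the argument without the explicit polynomial bound by simply noting that each summand $\binom{t}{i}\binom{n}{i-2j}/(2^{t}2^{n-1})\to 0$ and the number of summands is finite and independent of $n$, so the whole ratio $R(n)/(2^{t}2^{n-1})$ tends to $0$; both routes are routine and give the corollary immediately.
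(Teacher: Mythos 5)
Your argument is correct and is exactly the computation the paper has in mind: the paper gives no explicit proof, calling the corollary an ``obvious consequence'' of Theorem \ref{ThmIdCn}(2), and your observation that the subtracted double sum is a finite (independent of $n$) sum of binomial coefficients $\binom{n}{i-2j}$ of degree at most $t$ in $n$, hence $O(n^{t})$ and negligible against $2^{t}2^{n-1}$, is precisely the intended justification. No issues.
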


Notice that by Corollary \ref{Cor approximation c^L_n} $\var^L(\widetilde{G})$ has exponential growth, nevertheless it has no almost polynomial growth. In fact, the Grassmann algebra $G$ (ordinary case) is an algebra with $L$-action where $\delta_{i}$, $i=1,\dots,t$, acts trivially on $G$, i.e., $x^{\delta_{i}}\equiv 0$, $i=1,\dots,t$, are differential identities of $G$. Then it follows that $G\in \var^L(\widetilde{G})$, but by Theorem \ref{ThmG} $c_n^L(G)=2^{n-1}$. Thus we have the following result.

\begin{thm}{\cite[Theorem 6]{Rizzo2018}}
$\var^{L}(\widetilde{G})$ has no almost polynomial growth.
\end{thm}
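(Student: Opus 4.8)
The plan is to produce a \emph{proper} $L$-subvariety of $\var^{L}(\widetilde{G})$ whose sequence of differential codimensions is not polynomially bounded. Since $\var^{L}(\widetilde{G})$ itself has exponential growth by Corollary~\ref{Cor approximation c^L_n}, exhibiting such a subvariety will show, directly from the definition, that $\var^{L}(\widetilde{G})$ cannot have almost polynomial growth: it is not polynomially bounded, and not every proper $L$-subvariety of it has polynomial growth.

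The candidate I would use is the ordinary Grassmann algebra $G$ regarded as an $L$-algebra on which $L$ acts trivially, so that $x^{\delta_{i}}\equiv 0$ for all $i=1,\dots,t$. First I would verify the containment $\var^{L}(G)\subseteq\var^{L}(\widetilde{G})$, equivalently $\Id^{L}(\widetilde{G})\subseteq\Id^{L}(G)$. By Theorem~\ref{ThmIdCn}(1), the $T_{L}$-ideal $\Id^{L}(\widetilde{G})$ is generated by $[x,y,z]$, $[x^{\delta_{i}},y]$ and $x^{\delta_{i}\delta_{j}}$; each of these vanishes on $G$ endowed with the trivial action, since $[x,y,z]\in\Id^{L}(G)$ by Theorem~\ref{ThmG}(1) and the remaining generators involve the operators $\delta_{i}$, which act as $0$ on $G$. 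Hence the inclusion of $T_{L}$-ideals holds, so $\var^{L}(G)$ is an $L$-subvariety of $\var^{L}(\widetilde{G})$.

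Next I would observe that this inclusion is strict: $x^{\delta_{i}}\in\Id^{L}(G)$, whereas $x^{\delta_{i}}\notin\Id^{L}(\widetilde{G})$ because $\delta_{i}$ is a nonzero derivation of $G$ (indeed $\delta_{i}(g)=g_{i}g$ is nonzero for a suitable $g\in G_{1}$). Thus $\var^{L}(G)\subsetneq\var^{L}(\widetilde{G})$. Finally, by Theorem~\ref{ThmG}(2) we have $c_{n}^{L}(G)=2^{n-1}$, which is exponential and in particular not polynomially bounded. Therefore $\var^{L}(\widetilde{G})$ has a proper $L$-subvariety that does not have polynomial growth, and hence $\var^{L}(\widetilde{G})$ has no almost polynomial growth. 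The only step requiring genuine care is the containment $\Id^{L}(\widetilde{G})\subseteq\Id^{L}(G)$, but thanks to the explicit finite set of generators furnished by Theorem~\ref{ThmIdCn} this is immediate, and everything else is a direct unwinding of the definitions of $L$-subvariety and of almost polynomial growth.
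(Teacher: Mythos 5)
Your proposal is correct and follows essentially the same route as the paper: take the ordinary Grassmann algebra $G$ with the trivial $L$-action, note that $G\in\var^{L}(\widetilde{G})$ so it generates a proper $L$-subvariety, and observe that $c_{n}^{L}(G)=2^{n-1}$ is not polynomially bounded. Your explicit verification of the containment $\Id^{L}(\widetilde{G})\subseteq\Id^{L}(G)$ via the generators from Theorem~\ref{ThmIdCn} and of the properness via $x^{\delta_i}$ is a welcome (if routine) elaboration of what the paper leaves implicit.
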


%


\begin{thebibliography}{20}
\bibitem{BenantiGiambrunoSviridova2004} F. Benanti, A. Giambruno, I. Sviridova, {\em Asymptotics for the multiplicities in the cocharacters of some PI-algebras}, Proc. Amer. Math. Soc. {\bf 132} (2004), no. 3, 669--679.

\bibitem{CoelhoPolcinoMilies1993} S.P. Coelho, C. Polcino Milies, {\em Derivations of upper triangular matrix rings}, Linear Algebra Appl. {\bf 187} (1993), 263--267.

\bibitem{GiambrunoLaMattina2005} A. Giambruno, D. La Mattina, {\em PI-algebras with slow codimension growth}, J. Algebra 284 (1) (2005) 371–-391.

%

\bibitem{GiambrunoRizzo2018} A. Giambruno, C. Rizzo, {\em Differential identities, $2\times 2$ upper triangular matrices and varieties of almost polynomial growth}, J. Pure Appl. Algebra {\bf 223} (2019), no. 4, 1710--1727.


\bibitem{GiambrunoZaicev2005book} A. Giambruno and M. Zaicev, Polynomial identities and asymptotic methods, Math. Surv. Monogr., AMS, Providence, RI, {\bf 122} (2005).

\bibitem{Gordienko2013JA} A.S. Gordienko, {\em Asymptotics of H-identities for associative algebras with an H-invariant radical}, J. Algebra {\bf 393} (2013), 92--101.

\bibitem{GordienkoKochetov2014} A.S. Gordienko, M.V. Kochetov, {\em Derivations, gradings, actions of algebraic groups, and codimension growth of polynomial identities}, Algebr. Represent. Th. {\bf 17} (2014), no. 2, 539--563.



\bibitem{Kemer1979} A.R. Kemer, {\em Varieties of finite rank}, Proc. 15-th All the Union Algebraic Conf., Krasnoyarsk. {\bf 2} (1979), p. 73 (in Russian).

\bibitem{Kharchenko1979} V.K. Kharchenko, {\em Differential identities of semiprime rings}, Algebra Logic {\bf 18} (1979), 86--119.

\bibitem{KrakowskiRegev1973} D. Krakowski, A. Regev, {\em The polynomial identities of the Grassmann algebra}, Trans. Amer. Math. Soc. {\bf 181} (1973), 429--438.

\bibitem{Malcev1971} J.N. Malcev, {\em A basis for the identities of the algebra of upper triangular matrices}, Algebra i Logika {\bf 10} (1971), 393--400.


\bibitem{Procesi2006} C. Procesi, Lie Groups: An Approach through Invariants and Representations, Universitext, Springer-Verlag, New York, 2006.


\bibitem{Rizzo2018} C. Rizzo, {\em The Grassmann algebra and its differential identities}, Algebr. Represent. Theory {\bf 23} (2020), no. 1, 125--134. 
\end{thebibliography}
\end{document}